\theoremstyle{plain}
\newtheorem{Thm}{Theorem}[section]
\newtheorem{Lem}[Thm]{Lemma}
\newtheorem{Prop}[Thm]{Proposition}
\newtheorem{Conj}[Thm]{Conjecture}
\theoremstyle{definition}
\newtheorem{Def}[Thm]{Definition}
\newtheorem{Def-Lem}[Thm]{Definition-Lemma}
\newtheorem{Rem}[Thm]{Remark}
\newtheorem*{Ack}{Acknowledgments}
\newtheorem{Example}[Thm]{Example}
\newtheorem{Question}[Thm]{Question}
\newcommand{\Sing}{\operatorname{Sing}}
\newcommand{\Spec}{\operatorname{Spec}}
\newcommand{\Cl}{\operatorname{Cl}}
\newcommand{\Pic}{\operatorname{Pic}}
\newcommand{\Int}{\operatorname{Int}}
\newcommand{\bNE}{\operatorname{\overline{NE}}}
\newcommand{\Mov}{\operatorname{Mov}}
\newcommand{\Bs}{\operatorname{Bs}}
\newcommand{\mult}{\operatorname{mult}}
\newcommand{\Pli}{\operatorname{Pli}}
\newcommand{\SL}{\operatorname{SL}}
\newcommand{\nef}{\mathrm{nef}}
\newcommand{\Cox}{\mathrm{Cox}}
\newcommand{\Nef}{\operatorname{Nef}}
\newcommand{\wtr}{\operatorname{wr}}
\newcommand{\mbA}{\mathbb{A}}
\newcommand{\mbC}{\mathbb{C}}
\newcommand{\mbP}{\mathbb{P}}
\newcommand{\mbQ}{\mathbb{Q}}
\newcommand{\mbR}{\mathbb{R}}
\newcommand{\mbZ}{\mathbb{Z}}
\newcommand{\mcF}{\mathcal{F}}
\newcommand{\mcH}{\mathcal{H}}
\newcommand{\mcI}{\mathcal{I}}
\newcommand{\mcO}{\mathcal{O}}
\newcommand{\msp}{\mathsf{p}}
\newcommand{\ratmap}{\dashrightarrow}
\def\imod#1{\allowbreak\mkern10mu({\operator@font mod}\,\,#1)}
\title[Birational rigidity of del Pezzo fibrations]{On birational rigidity of singular del Pezzo fibrations of degree 1}
\author[Takuzo Okada]{Takuzo Okada}
\address{Department of Mathematics, Faculty of Science and Engineering\endgraf
Saga University, Saga 840-8502 Japan}
\email{okada@cc.saga-u.ac.jp}
\subjclass[2010]{14E07 \and 14E08 \and 14J30}
\date{}
\begin{document}

\begin{abstract}
We give a sufficient condition for birational superrigidity of del Pezzo fibrations of degree $1$ with only $\frac{1}{2} (1,1,1)$ singular points, generalizing the so called $K^2$-condition.
As an application, we also prove that a del Pezzo fibrations of degree $1$ with only $\frac{1}{2} (1,1,1)$ singular points embedded in a toric $\mbP (1,1,2,3)$-bundle over $\mbP^1$ is birationally superrigid if and only if it satisfies the $K$-condition.
\end{abstract}

\maketitle


\section{Introduction} \label{sec:intro}

Birational rigidity of nonsingular del Pezzo fibrations of low degree is deeply studied by Pukhlikov and Grinenko.
A del Pezzo fibration $X \to \mbP^1$ over $\mbP^1$ is said to satisfy the $K^2$-{\it condition} (resp.\ $K$-{\it condition}) if the $1$-cycle $(-K_X)^2$ is not contained in the interior of the cone $\bNE (X)$ of effective curves on $X$ (resp.\ $-K_X$ is not in the interior of the movable cone $\Mov (X)$).
Note that the $K^2$-condition implies the $K$-condition.
Pukhlikov \cite{Puk} proved that a nonsingular del Pezzo fibration $X/\mbP^1$ of degree $1$, $2$ and $3$ (in the last case, a generality condition is required) satisfying the $K^2$-condition is birationally rigid.
Later on, Grinenko proved, in a series of papers \cite{Gri00,Gri03,Gri06}, that a nonsingular del Pezzo fibration of degree $1$ and $2$ (in the latter case, a generality condition is required) is birationally rigid (over the base) if and only if $X/\mbP^1$ satisfies the $K$-condition.
Based on these results, Grinenko proposed the following.

\begin{Conj}[Grinenko] \label{conj}
A del Pezzo fibration of degree $1$ (with at most terminal singularities) is birationally rigid over the base if and only if it satisfies the $K$-condition.
\end{Conj}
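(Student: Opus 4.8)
The plan is to prove both implications of the conjecture by the Noether--Fano--Iskovskikh method of maximal singularities, working birationally over the base $\mbP^1$. The ``only if'' direction I expect to be the more elementary one, and would establish it contrapositively. If the $K$-condition fails then $-K_X$ lies in the interior of $\Mov (X)$, so a movable class not proportional to $-K_X$ becomes nef on a small modification; running the relative minimal model program on the corresponding linear system produces a second Mori fibre space structure on a birational model of $X$ that is not isomorphic to $X/\mbP^1$ over the base. The existence of this rival structure contradicts rigidity. Concretely it is realised by an explicit fibrewise untwisting, reflecting the extra birational self-maps of Bertini type on the degree-$1$ generic fibre that appear precisely when $-K_X$ moves into the interior of $\Mov (X)$.

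For the substantive ``if'' direction, assume $X/\mbP^1$ satisfies the $K$-condition and suppose, for contradiction, that $X$ is not birationally rigid. Then there is a movable linear system $\mcM \subset |{-n}K_X + l F|$, with $F$ a fibre and $n$ the Sarkisov threshold, defining a birational map to a competing Mori fibre space; by the Noether--Fano inequality the pair $(X, \tfrac{1}{n}\mcM)$ is not canonical and therefore has a maximal centre $\Gamma \subset X$. The heart of the proof is a case analysis that excludes every possible $\Gamma$. When $\Gamma$ is a curve dominating $\mbP^1$, or the generic fibre itself, the intersection-theoretic consequences of a maximal singularity are incompatible with the position of $(-K_X)^2$ relative to $\bNE (X)$; this is exactly where the generalised $K^2$-condition of the present paper does the work. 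When $\Gamma$ is a point lying in a nonsingular fibre one adapts the classical multiplicity estimates of Pukhlikov and Grinenko for degree $1$.

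The main obstacle, and the reason the singular case does not follow formally from the nonsingular theory, is the treatment of maximal centres sitting at the $\tfrac{1}{2}(1,1,1)$ terminal quotient singular points. Near such a point the smooth-case inequalities simply fail, so one must instead establish a local multiplicity bound of $4n^2$-type valid on the quotient singularity, controlling the discrepancies of the divisors extracted by weighted blow-ups of the $\tfrac{1}{2}(1,1,1)$ point. I would derive this by passing to an appropriate weighted blow-up, computing there both the discrepancy and the multiplicity of $\mcM$, and then combining the resulting local inequality with the global numerical constraint imposed by the $K$-condition to force a contradiction. Making these estimates sharp enough to cover all admissible weighted blow-ups, rather than merely the ordinary one, is the delicate point, and it is precisely what extends the method from the nonsingular fibrations of Pukhlikov and Grinenko to the $\tfrac{1}{2}(1,1,1)$-singular fibrations considered here.
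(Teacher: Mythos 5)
The statement you are asked about is a \emph{conjecture}, not a theorem of the paper: the paper does not prove it, and it remains open. The ``only if'' direction is indeed known (the paper cites \cite{Gri01} for it), and your sketch of that half --- the failure of the $K$-condition producing a rival Mori fibre space after a small modification --- is in the right spirit. But your ``if'' direction has a genuine gap at exactly the point where the conjecture is hard. Your exclusion of maximal centres in fibres rests on ``the generalised $K^2$-condition of the present paper'', i.e.\ the $K^3_{\delta}$-condition. That condition is an \emph{additional numerical hypothesis}: the implications recorded in Remark \ref{rem:Kcondsequiv} go from the $K^2$-condition to the $K$-condition, not the other way, and the $K$-condition alone does not imply any $K^3_{\delta}$-bound for a general degree-$1$ fibration. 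The paper's actual results (Theorems \ref{mainthm} and \ref{mainthm2}) assume either the $K^3_{3/2}$-condition outright, or an embedding into a toric $\mbP(1,1,2,3)$-bundle from which the $K^3_1$-condition is \emph{deduced} from the $K$-condition by the explicit classification in Section \ref{sec:birrigdP}. Without such an extra input, the degree estimate in the proof of Proposition \ref{prop:square} (the comparison of the two bounds \eqref{eq:nsptdegineq1} and \eqref{eq:nsptdegineq2} on $(-K_X\cdot Z_{\mathrm{v}})$) does not close, so your argument does not yield the conjecture.

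A second, independent gap: the conjecture allows arbitrary terminal singularities, while your local analysis (and the paper's) only treats $\frac{1}{2}(1,1,1)$ quotient points. The paper explicitly acknowledges in the final Example of Section \ref{sec:variousrig} that its methods fail already for a fibration with a $\frac{1}{3}(1,1,2)$ point, even though $(-K_X)^3+\nef(X/\mbP^1)$ is small there. So the ``local multiplicity bound of $4n^2$-type'' you propose would have to be established for every terminal singularity type and every divisorial extraction over it, which is far beyond what is done here. In short, what you have written is a plausible strategy outline for the special cases the paper actually proves, not a proof of the conjecture, and you should be careful to distinguish the conjecture from the theorems (\ref{mainthm}, \ref{mainthm2}) that support it.
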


The ``only if" part is proved in \cite{Gri01}.
We remark that the same conjecture for del Pezzo fibrations of degree $2$ does not hold: Ahmadinezhed \cite{Ahm2} gave an example of a birationally non-rigid singular (Gorenstein) del Pezzo fibration of degree $2$ satisfying the $K$-condition.

In the context of Minimal Model Program, it is natural and important to study singular del Pezzo fibrations.
Recently there are some progress for singular del Pezzo fibrations of degree $2$:
Krylov \cite{Krylov} and Ahmadinezhad--Krylov \cite{AK} proved that a del Pezzo fibration of degree $2$ with only singular points of type $\frac{1}{2} (1,1,1)$ satisfying the $K^2$-condition is birationally rigid under some additional assumptions.

One of the main aims of this paper is to give a sufficient condition for birational (super)rigidity of singular del Pezzo fibrations of degree $1$.

\begin{Def}
Let $X/\mbP^1$ be a del Pezzo fibration of degree $1$.
We denote by $F$ the fiber class of the fibration $X \to \mbP^1$.
We define
\[
\nef (X/\mbP^1) := \inf \{\, r \mid \text{$-K_X + r F$ is nef}\,\},
\]
and call it the {\it nef threshold} of $X/\mbP^1$.
For a number $\delta \in \mbR$, we say that $X/\mbP^1$ satisfies the $K^3_{\delta}$-{\it condition} if the inequality
\[
(-K_X)^3 + \nef (X/\mbP^1) \le \delta
\]
is satisfied.
\end{Def}

\begin{Rem} \label{rem:Kcondsequiv}
We see that the $K^2$-condition is equivalent to the $K^3_{0}$-condition.
Indeed, since the nef cone of $X$ is spanned by $F$ and $-K_X + \nef (X/\mbP^1) F$, and $F \cdot (-K_X)^2 = 1 > 0$, the $1$-cycle $(-K_X)^2$ is not in the interior of $\bNE (X)$, i.e., the $K^2$-condition is satisfied, if and only if the inequality
\[
(-K_X + \nef (X/\mbP^1)F) \cdot (-K_X)^2 \le 0
\]
holds, which is nothing but the $K^3_0$-condition.
Moreover it is obvious that the $K^3_{\delta}$-condition implies the $K^3_{\delta'}$-condition for $\delta \le \delta'$.
Thus, for any $\delta \ge 0$, we have the implications
\[
\text{$K^2$-condition} \ \Longrightarrow \ \text{$K$-condition \& $K^3_{\delta}$-condition} \ \Longrightarrow \text{$K$-condition}.
\]
Moreover it is worth mentioning that Iskovskikh already noticed the potential importance of the invariant $(-K_X)^3 + \nef (X/\mbP^1)$ and it was conjectured in \cite{Iskovskikh} that a nonsingular del Pezzo fibrations of degree $1$ is birationally rigid if the inequality $(-K_X)^3 + \nef (X/\mbP^1) \le 1$ is satisfied.
\end{Rem}

We state the main theorem, where we refer readers to Definition \ref{def:birsuperrig} for the definition of birational superrigidity.

\begin{Thm} \label{mainthm}
Let $\pi \colon X \to \mbP^1$ be a del Pezzo fibration of degree $1$ with the following properties:
\begin{enumerate}
\item $X$ has only terminal quotient singular points of type $\frac{1}{2} (1,1,1)$.
\item Every fiber $F_t$ of $\pi$ over $t \in \mbP^1$ is embedded in $\mbP (1,1,2,3)$ as a weighted hypersurface of degree $6$ in such a way that $\mcO_{F_t} (-K_X|_{F_t})$ is isomorphic to $\mcO_{F_t} (1)$.
\end{enumerate}
If $X/\mbP^1$ satisfies both $K$-condition and $K^3_{3/2}$-condition, then $X/\mbP^1$ is birationally superrigid and, in particular, $X$ is not rational.
\end{Thm}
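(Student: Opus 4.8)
The plan is to argue by contradiction using the Noether--Fano--Iskovskikh method (the method of maximal singularities). Suppose $X/\mbP^1$ is not birationally superrigid. Then there is a movable linear system $\mcM \subset |-nK_X + lF|$, with $n \ge 1$, such that the pair $(X, \frac{1}{n}\mcM)$ is not canonical; equivalently there is a divisorial valuation $E$ over $X$ (a maximal singularity) with $\frac{1}{n}\operatorname{ord}_E \mcM > a(E,X)$, where $a(E,X)$ is the discrepancy. The first step is to exploit the $K$-condition. Since $X/\mbP^1$ satisfies it, $-K_X$ lies on the boundary of $\Mov(X)$; as $\operatorname{Pic}(X)\otimes\mbR$ is two-dimensional and spanned by $-K_X$ and $F$, this forces $\Mov(X) = \langle -K_X, F\rangle$, so every movable $\mcM$ has $l \ge 0$. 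The $K$-condition is moreover what excludes birational maps to del Pezzo fibrations with a different structure over the base, so the remaining task is purely to exclude the maximal singularity $E$ of $(X, \frac{1}{n}\mcM)$. Throughout I write $A = -K_X$, and I record $A^2\cdot F = (-K_X)^2\cdot F_t = 1$.

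I would then split according to the center $B = \operatorname{center}_X(E)$: either $B$ dominates $\mbP^1$ (horizontal) or $B$ is contained in a single fiber $F_{t_0}$ (vertical). For a horizontal center I would pass to the generic fiber $X_\eta$, a del Pezzo surface of degree $1$ over the function field $\K(\mbP^1)$: the restriction $\mcM_\eta$ is a movable system in $|-nK_{X_\eta}|$, and a horizontal maximal singularity would produce a noncanonical pair $(X_\eta, \frac1n\mcM_\eta)$; since a del Pezzo surface of degree $1$ is birationally rigid and its only nontrivial birational selfmap is the Bertini involution $y\mapsto -y$, which is biregular on $X$, no such maximal singularity survives. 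For a vertical center that is a curve $B \subset F_{t_0}$ I would restrict to the fiber, where $\mcM|_{F_{t_0}} \sim -nK_{F_{t_0}}$; since $-K_{F_{t_0}}$ is ample with $(-K_{F_{t_0}})^2 = 1$, the inequality $\mult_B\mcM \cdot \bigl(B \cdot (-K_{F_{t_0}})\bigr) \le (-nK_{F_{t_0}})\cdot(-K_{F_{t_0}}) = n$ forces $\mult_B \mcM \le n$, contradicting maximality.

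The heart of the matter is a vertical center that is a point $p \in F_{t_0}$, possibly one of the $\frac{1}{2}(1,1,1)$ points. Here I would invoke Corti's $4n^2$-inequality, in the form adapted to terminal quotient singularities, for the self-intersection cycle $Z = \mcM^2$: at such a point one obtains $\mult_p Z > 4n^2$ up to the appropriate orbifold correction. To bound $\mult_p Z$ from above I would use two features. First, since $\mcO_X(F)$ is trivial in a neighbourhood of the fiber $F_{t_0}$, the coefficient $l$ does not affect the local geometry at $p$, so the local analysis reduces to the anticanonical part of $\mcM$. Second, I would decompose $Z = Z_{\hor} + Z_{\ver}$ and estimate the multiplicity of each part at $p$ by intersecting with $F$ and with the nef divisor $-K_X + \nef(X/\mbP^1)F$, using $A^2\cdot F = 1$ and $A^2\cdot\bigl(-K_X+\nef(X/\mbP^1)F\bigr) = (-K_X)^3 + \nef(X/\mbP^1)$ together with the hypothesis $(-K_X)^3 + \nef(X/\mbP^1) \le 3/2$. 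Combining these upper bounds with $\mult_p Z > 4n^2$ yields the contradiction, and non-rationality then follows because superrigidity forces $X/\mbP^1$ to be the unique Mori fiber space structure, whose base $\mbP^1$ rules out a birational map to $\mbP^3$.

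The main obstacle I expect is precisely this point-center case at the $\frac{1}{2}(1,1,1)$ singularities: at a quotient singular point both the smooth-case $4n^2$-inequality and the intersection-theoretic upper bound on $\mult_p Z$ must be replaced by their orbifold counterparts, and the careful bookkeeping of discrepancies and orbifold multiplicities is delicate. The constant $3/2$ in the $K^3_\delta$-condition should be exactly the slack needed to absorb the contribution of the terminal quotient points into the Noether--Fano inequality, which is why the threshold here exceeds the value $1$ appearing in Iskovskikh's conjecture for the nonsingular case.
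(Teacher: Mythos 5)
Your overall architecture (maximal singularities, case division by the center, Corti's inequality plus an intersection-theoretic upper bound against $F$ and the nef divisor) matches the paper, but there are two genuine gaps at exactly the places you flag as delicate. First, the paper does \emph{not} handle the $\frac{1}{2}(1,1,1)$ points by an orbifold $4n^2$-inequality, and no such inequality is established or invoked; instead these points are excluded outright before any Corti-type estimate is used. The mechanism is Lemma \ref{lem:exclcriquot}: take the Kawamata blowup $\varphi\colon Y\to X$ at $\msp$ and exhibit infinitely many curves $\tilde C^\circ_\lambda$ in the fiber with $(-K_Y\cdot \tilde C^\circ_\lambda)\le 0$ and $(E\cdot \tilde C^\circ_\lambda)>0$; these come from the movable system $|\mcI_{\msp}\mcO_{F_o}(2)|$ on the degree-$6$ fiber (Lemma \ref{lem:wsurf}(ii)), lifted to divisors in $|-2K_X+nF|$. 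Consequently your closing explanation of the constant $3/2$ is also off: it is not ``slack absorbing the quotient points'' but arises purely in the \emph{nonsingular}-point case, as $(4n^2-n^2)/2$ --- the $4n^2$ from Corti, the $-n^2$ from $\mult_{\msp}Z_{\hor}\le (F\cdot Z)=n^2$, and the division by $2$ because on a degree-$1$ del Pezzo fiber one must cut with a curve in $|\mcO_{F_o}(2)|$ (not $|\mcO_{F_o}(1)|$) to avoid the components of $Z_{F_o}$ while passing through $\msp$.

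Second, your remark that ``the coefficient $l$ does not affect the local geometry'' hides the step that actually makes the numbers close. With $\mcM\subset|-nK_X+lF|$ and $l>0$, the plain bound $\mult_p Z>4n^2$ at a single point gives only $(-K_X\cdot Z_{\ver})>\tfrac32 n^2$, while the upper bound is $n^2(-K_X)^3+2ln+n^2\nef(X/\mbP^1)\le \tfrac32 n^2+2ln$; for $l>0$ there is no contradiction. The paper's Proposition \ref{prop:birrigbasic} supplies the missing ingredient: for a non-square map one obtains points $\msp_1,\dots,\msp_k$ in distinct fibers $F_i$ and weights $\lambda_i>0$ with $\sum\lambda_i>l/n$ such that $(X,-\sum\lambda_i F_i+\tfrac1n\mcM)$ is non-canonical at each $\msp_i$, and the Corti inequality is applied in the strengthened form $\mult_{\msp_i}Z_{\hor}+t_i\mult_{\msp_i}Z_{F_i}\ge 4(1+\lambda_i t_i)n^2$. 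It is the resulting extra term $2n^2\sum\lambda_i>2ln$ on the lower-bound side that cancels the $2ln$ on the upper-bound side. Without this device (or an equivalent one) your argument fails whenever the target Mori fiber space forces $l>0$.
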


As an application we investigate birational superrigidity of del Pezzo fibrations of degree $1$ of typical type. 
The following supports Conjecture \ref{conj}, where we refer readers to Section \ref{sec:toricbundles} for the definition of toric $\mbP (1,1,2,3)$-bundles over $\mbP^1$.

\begin{Thm} \label{mainthm2}
Let $X/\mbP^1$ be a singular del Pezzo fibration of degree $1$ with only $\frac{1}{2} (1,1,1)$ points embedded as a hypersurface in a toric $\mbP (1,1,2,3)$-bundle over $\mbP^1$.
If $X/\mbP^1$ satisfies the $K$-condition, then it satisfies the $K^3_1$-condition. 
In particular, $X/\mbP^1$ is birationally superrigid if and only if it satisfies the $K$-condition.
\end{Thm}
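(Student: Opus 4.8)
The statement has two parts: the implication that the $K$-condition forces the $K^3_1$-condition, and the resulting characterization of birational superrigidity. I would first dispose of the second part assuming the first. By Remark \ref{rem:Kcondsequiv} the $K^3_1$-condition implies the $K^3_{3/2}$-condition, so once we know that the $K$-condition yields the $K^3_1$-condition, any $X/\mbP^1$ as in the statement satisfying the $K$-condition satisfies both the $K$-condition and the $K^3_{3/2}$-condition. Since each fibre is by hypothesis a sextic hypersurface in $\mbP(1,1,2,3)$ and $X$ has only $\frac{1}{2}(1,1,1)$ points, conditions (1) and (2) of Theorem \ref{mainthm} hold, and that theorem gives birational superrigidity. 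Conversely, birational superrigidity implies birational rigidity over the base, which by \cite{Gri01} implies the $K$-condition; this is the ``only if'' direction. Thus everything reduces to the first assertion: \emph{the $K$-condition implies the $K^3_1$-condition}.

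For this I would work on the explicit toric model. Write the ambient toric $\mbP(1,1,2,3)$-bundle $T \to \mbP^1$ via its Cox ring, with homogeneous coordinates $u,v$ on the base and $x_0,x_1,y,z$ of fibre-weights $1,1,2,3$, twisted over $\mbP^1$ by integers $a_1,a_2,a_3$; thus $\Pic(T) = \mbZ F \oplus \mbZ L$, where $F$ is the fibre class and $L$ restricts to $\mcO(1)$ on each fibre. The divisor $X$ has class $mF + 6L$ for some $m$, and the restriction map $\Pic(T) \to \Pic(X)$ is an isomorphism; adjunction gives $-K_X = (kF + L)|_X$ with $k = 2 + a_1 + a_2 + a_3 - m$, so that $-K_X$ restricts to $\mcO(1)$ on fibres as required by the embedding hypothesis. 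The requirement that $X$ have only $\frac{1}{2}(1,1,1)$ singular points translates, via quasismoothness, into the presence of the monomial $z^2$ (so that $X$ avoids the $\frac{1}{3}(1,1,2)$ point of the fibres) together with finiteness of the intersection of $X$ with the weight-$2$ section; these impose explicit inequalities on $a_1,a_2,a_3,m$ and leave only finitely many combinatorial types of bundle.

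Next I would make the two invariants explicit. Toric intersection theory on $T$ gives $F^2 = 0$ and $F\cdot L^3 = \frac{1}{6}$, while the Stanley--Reisner relation coming from $\{x_0=x_1=y=z=0\}=\emptyset$ expresses $L^4$ linearly in the twists; intersecting $(kF+L)^3$ with $[X] = mF+6L$ then yields $(-K_X)^3$ as an explicit affine function of $(a_1,a_2,a_3,k)$. To compute $\nef(X/\mbP^1)$ and to read off the $K$-condition I would determine the nef and movable cones of $X$. These are inherited from the secondary fan of $T$: in the basis $(F,L)$ one gets $\Nef(X) = \langle F,\ c_0F+L\rangle$ and $\Mov(X) = \langle F,\ eF+L\rangle$ for thresholds $c_0 \ge e$ computed from $a_1,a_2,a_3$, so that $\nef(X/\mbP^1) = c_0 - k$ and the $K$-condition is precisely the inequality $k \le e$. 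Because $(-K_X)^3 + \nef(X/\mbP^1)$ varies monotonically with the base-twist of $X$, among the configurations satisfying the $K$-condition its largest value is attained in the boundary case $k = e$, i.e.\ when $-K_X$ lies on the extremal ray of $\Mov(X)$; it therefore suffices to verify $(-K_X)^3 + \nef(X/\mbP^1) \le 1$ in that extremal case.

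The main obstacle is precisely this last step: one must pin down the thresholds $c_0$ and $e$ for every combinatorial type of toric $\mbP(1,1,2,3)$-bundle --- that is, for each ordering of the quantities $a_1, a_2/2, a_3/3$ determining where the walls of the secondary fan fall --- and verify by direct computation that the resulting value of $(-K_X)^3 + \nef(X/\mbP^1)$ at $k = e$ is bounded above by $1$. This requires both the correct identification of $\Mov(X)$ and $\Nef(X)$ with the toric cones (which rests on $X$ being a quasismooth Mori dream space of the expected type) and careful bookkeeping of the singularity constraints to exclude spurious configurations; the inequality itself is then a finite, if delicate, case-by-case check.
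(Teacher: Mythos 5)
Your treatment of the second assertion coincides with the paper's: the $K^3_1$-condition implies the $K^3_{3/2}$-condition, so Theorem \ref{mainthm} gives the ``if'' direction and \cite{Gri01} the ``only if'' direction. The framework you set up for the first assertion --- Cox ring of the bundle, adjunction, the intersection numbers $(H^3\cdot F)=\frac16$ and the Stanley--Reisner relation, and a case division by the ordering of the weight ratios --- is also the paper's (Section \ref{sec:birrigdP}). But the decisive step is absent, and the shortcut you offer in its place does not work. First, the reduction to the ``boundary case $k=e$'' of the movable cone presupposes that the twist of $X$ can be slid independently of the ambient bundle; it cannot. Avoiding the $\frac{1}{3}(1,1,2)$-locus forces $w^2$ (in your notation $z^2$) to appear in the equation, which pins $X$ to the class $|6H+2\nu F|$ (Lemma \ref{lem:basiccond}), so for a fixed bundle $P(\lambda,\mu,\nu)$ the class of $X$, and hence $k$, is determined: there is no family in which $\delta_X$ varies monotonically while the walls $c_0$ and $e$ stay put. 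Second, identifying $\Mov(X)$ with a cone read off the secondary fan of $P$ for every chamber type is itself unproved and is exactly what the paper avoids: it first classifies the finitely many triples with $\delta_X>0$ (Table \ref{table:dP1}) and only then checks $-K_X\in\Int\Mov(X)$ by hand for the five entries with $\delta_X>1$, via ampleness of $-K_X$ or movability of $D_z|_X$ (Lemma \ref{lem:failKcond}).

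There is also a concrete gap in your list of singularity constraints. Besides the presence of $w^2$ and the behaviour along the weight-$2$ curve, the paper needs Lemma \ref{lem:restrictb}: when $\wtr(w)<\wtr(y)$ the torus-invariant curve $C_y=\{x=z=w=0\}$ automatically lies on $X$, and requiring $X$ to have only $\frac{1}{2}(1,1,1)$ points along it forces $2\nu\ge\max\{5\lambda,4\lambda+\mu\}$ or one of two degenerate equalities. Without this, the set of admissible twists in case (b) with $\delta_X$ large is infinite --- for instance $(\lambda,\mu,\nu)=(\lambda,-1,0)$ satisfies all the constraints you list and gives $\delta_X=2\lambda+\frac52$, unbounded --- so no ``finite, if delicate, case-by-case check'' is available on the data you have assembled. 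The heart of the paper's proof is precisely the enumeration you defer: Propositions \ref{prop:case(a-i)}, \ref{prop:case(a-ii)} and \ref{prop:case(b)} reduce to thirteen triples, and the five with $\delta_X>1$ are shown to violate the $K$-condition. As it stands, your proposal is a correct frame with the picture missing.
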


We believe that most of the del Pezzo fibrations of degree $1$ with at most $\frac{1}{2} (1,1,1)$ points can be embedded in a toric $\mbP (1,1,2,3)$-bundle over $\mbP^1$ as a hypersurface.
In fact this is true when $X$ is nonsingular.
We pose the following.

\begin{Question}
Is every del Pezzo fibration of degree $1$ with at most $\frac{1}{2} (1,1,1)$ singularities embedded as a hypersurface in a toric $\mbP (1,1,2,3)$-bundle over $\mbP^1$?
\end{Question}

If the answer is yes, then Theorem \ref{mainthm2} shows that Conjecture \ref{conj} is true for del Pezzo fibrations of degree $1$ with at most $\frac{1}{2} (1,1,1)$ singularities.

There are several versions of the definition of birational rigidity for Mori fiber spaces.
We compare these notions and explain subtle differences in Section \ref{sec:variousrig}, where we in particular give an example of a nonsingular del Pezzo fibration of degree $1$ which is ``birationally rigid" but does not satisfy the $K$-condition. 
This should not be regarded as a counterexample to Conjecture \ref{conj} because the fibration is not ``birationally rigid over the base".

\begin{Ack}
The author would like to thank Professor Ivan Cheltsov for useful comments on an earlier version of the paper.
He also would like to thank Doctors Hamid Ahmadinezhad and Igor Krylov for useful comments.
\end{Ack}

\section{Preliminaries}

\subsection{Definition of birational superrigidity}

\begin{Def}
Let $X$ be a normal projective $\mbQ$-factorial variety with only terminal singularities admitting a surjective morphism $\pi \colon X \to S$ to a normal projective variety $S$.
We say that $\pi \colon X \to S$ (or we often denote it by $X/S$) is a {\it Mori fiber space} if $\pi$ has connected fibers, $-K_X$ is $\pi$-ample, the relative Picard number of $\pi$ is $1$ and $\dim S < \dim X$.

A Mori fiber space $X/\mbP^1$ with $\dim X = 3$ is called a {\it del Pezzo fibration} over $\mbP^1$.
A general fiber $F$ of a del Pezzo fibration $X/\mbP^1$ is a nonsingular del Pezzo surface and the {\it degree} of $X/\mbP^1$ is defined to be the degree $K_F^2$ of a general fiber $F$. 
\end{Def}

\begin{Def}
A birational map $f \colon X \ratmap X'$ between Mori fiber spaces $X/S$ and $X'/S'$ is {\it square} if there is a birational map $g \colon S \ratmap S'$ such that the diagram
\[
\xymatrix{
X \ar[d] \ar@{-->}[r]^f & X' \ar[d] \\
S \ar@{-->}[r]_g & S'}
\]
commutes and the induced birational map between generic fibers of $X/S$ and $X'/S'$ is a biregular isomorphism.
\end{Def}

\begin{Def} \label{def:birsuperrig}
We say that a Mori fiber space $X/S$ is {\it birationally superrigid} if any birational map $f \colon X \ratmap X'$ to a Mori fiber space $X'/S'$ is square.
\end{Def} 

We refer readers to Section \ref{sec:variousrig} for various versions of birational rigidity and comparison between them.

\begin{Rem} \label{rem:defrig}
As explained in the introduction, Grinenko \cite{Gri00,Gri03,Gri06} showed that a nonsingular del Pezzo fibration of degree $1$ over $\mbP^1$ is birationally rigid if and only if it satisfies $K$-condition.
The definition of birational rigidity there is as follows: a del Pezzo fibration $X/\mbP^1$ is {\it birationally rigid} if any birational map $\varphi \colon X \ratmap X'$ to a Mori fiber space $X'/S'$ is birational over the base, that is, there is an isomorphism $\mbP^1 \to S'$ that makes the diagram
\[
\xymatrix{
X \ar[d] \ar@{-->}[r]^{\varphi} & X' \ar[d] \\
\mbP^1 \ar[r]^{\cong} & S'}
\]
commutative.
Here we emphasize that $\varphi$ is not assumed to induce biregular automorphism between generic fibers of $X/\mbP^1$ and $X'/S'$, that is, $\varphi$ is not necessarily square.
However, if $X/\mbP^1$ is of degree $1$, then a birational map $\varphi \colon X \ratmap X'$ over the base is square.
Thus birational rigidity in the sense of Grinenko is equivalent to birational superrigidity for del Pezzo fibrations of degree $1$.
\end{Rem}

\subsection{Framework of proof}

In this subsection, let $\pi \colon X \to \mbP^1$ be a del Pezzo fibration.
We do not impose any condition on the degree of the fibration or the singularities of $X$ unless otherwise specified.
We explain the fact, which is well known at least when $X$ is nonsingular, that the failure of birational rigidity for $X/\mbP^1$ implies the existence of a movable linear system $\mcH$ on $X$ which is very singular.

\begin{Def}
Let $V$ be a normal variety, $D$ a $\mbQ$-Cartier $\mbQ$-divisor on $V$ (not necessarily effective), $\mcH$ a movable linear system of $\mbQ$-Cartier divisors on $X$ and $r$ a rational number.
We say that the pair $(V, D + r \mcH)$ is {\it canonical} if for any exceptional divisor $E$ over $X$, we have
\[
a_E (K_V) \ge \mult_E (D) + r \mult_E (\mcH),
\]
where $a_E (K_V)$ denotes the discrepancy of $K_V$ along $E$.
\end{Def}

For a del Pezzo fibration $X/\mbP^1$, we see that $\Pic (X) \otimes \mbQ$ is generated by $-K_X$ and the fiber class $F$.
Hence, for any linear system $\mcH$ on $X$, we have $\mcH \sim_{\mbQ} - n K_X + m F$ for some $n, m \in \mbQ$, which means that any member of $\mcH$ is $\mbQ$-linearly equivalent to $- n K_X + m F$.

\begin{Def} \label{def:weakmaxsing}
Let $\mcH \sim_{\mbQ} - n K_X + m F$ a movable linear system on $X$ with $n > 0$.
A {\it maximal singularity} of $\mcH$ is an exceptional divisor $E$ over $X$ for which 
\[
\mult_E (\mcH) > n a_E (K_X).
\]
The center of $E$ on $X$ is called a {\it maximal center} of $\mcH$.

We say that an irreducible subvariety $\Gamma \subset X$ is a {\it maximal center} if it is a maximal center of some movable linear system $\mcH \sim_{\mbQ} - n K_X + m F$ with $n > 0$.
\end{Def}

Suppose we are given a birational map $f \colon X \ratmap X'$ to a Mori fiber space $X'/S'$.
Let $\mcH' = \left| - n' K_{X'} + {\pi'}^* A' \right|$ be a very ample complete linear system on $X'$, where $n'$ is a positive integer and $A'$ is an ample divisor on $S'$.
The birational transform $\mcH$ of $\mcH'$ via $f$ is called a {\it movable linear system associated to $f$}.
A priori, we have $\mcH \sim_{\mbQ} - n K_X + m F$ for some $m, n \in \mbQ$.
However, after replacing $\mcH'$ with $\left| l (-n' K_{X'} + {\pi'}^*A') \right|$ for a divisible $l > 0$, we may assume that $\mcH \sim - n K_X + m F$, that is, $\mcH \subset \left| - n K_X + m F \right|$, with $m, n \in \mbZ$. 

An irreducible curve on $X$ is called {\it vertical} if it is contained in a $\pi$-fiber, otherwise it is called {\it horizontal}.

The following result is a direct and a slight generalization of Corti's argument in \cite{Corti} (and is also a generalization of the original argument given by Pukhlikov \cite{Puk}), which explains a framework of proof of birational rigidity.

\begin{Prop} \label{prop:birrigbasic}
Let $\pi \colon X \to \mbP^1$ be a del Pezzo fibration.
Suppose that we are given a non-square birational map $f \colon X \ratmap X'$ to a Mori fiber space $\pi' \colon X' \to S'$ and let $\mcH \subset \left|- n K_X + m F \right|$ be a movable linear system associated to $f$.
Suppose in addition that no horizontal curve on $X$ is a maximal center of $\mcH$.
If $m \ge 0$, then there exist irreducible subvarieties $\Gamma_1,\dots,\Gamma_k$ of $X$ contained in distinct $\pi$-fibers and positive rational numbers $\lambda_1,\dots,\lambda_k$ with the following properties:
\begin{enumerate}
\item $\Gamma_i$ is either a point or a curve.
\item $(X, - \sum \lambda_i F_i + \frac{1}{n} \mcH)$ is not canonical along $\Gamma_1,\dots,\Gamma_k$, where $F_i$ is the $\pi$-fiber containing $\Gamma_i$.
\item $\sum_{i=1}^k \lambda_i > m/n$.
\end{enumerate}
\end{Prop}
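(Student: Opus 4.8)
The plan is to run the Noether--Fano--Iskovskikh method in the form adapted to Mori fiber spaces by Corti \cite{Corti}. First I would resolve $f$ by a common resolution $p \colon W \to X$, $q \colon W \to X'$ and compare the two log pullbacks of the mobile system. On the target, $\mcH' = \left|-n'K_{X'} + {\pi'}^*A'\right|$ satisfies $K_{X'} + \frac{1}{n'}\mcH' \sim_{\mbQ} \frac{1}{n'}{\pi'}^*A'$, which is nef and pulled back from $S'$, and $(X', \frac{1}{n'}\mcH')$ is canonical. The heart of the Noether--Fano--Iskovskikh inequality is the dichotomy: if $(X, \frac{1}{n}\mcH)$ is canonical and the class $K_X + \frac{1}{n}\mcH$ is nef, then $f$ is square. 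Since $\mcH \sim_{\mbQ} -nK_X + mF$ gives $K_X + \frac{1}{n}\mcH \sim_{\mbQ} \frac{m}{n}F$, and since $F$ is nef with $\frac{m}{n} \ge 0$ by the hypothesis $m \ge 0$, this class is nef; as $f$ is non-square, the pair $(X, \frac{1}{n}\mcH)$ must fail to be canonical.

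Next I would classify the maximal centers. Because $\mcH$ is movable it has no fixed component, so $\mult_D(\mcH) = 0$ for every prime divisor $D \subset X$ and no divisor on $X$ is a maximal center; by hypothesis no horizontal curve is one either. Hence every maximal center is a point or a vertical curve, in particular contained in a single $\pi$-fiber, which already yields property (1). Let $F_1, \dots, F_k$ be the finitely many distinct fibers containing maximal centers. Since distinct fibers are disjoint I may treat them independently: for each $i$ set
\[
\lambda_i^{0} := \inf\{\lambda \ge 0 \mid (X, -\lambda F_i + \tfrac{1}{n}\mcH) \text{ is canonical along } F_i\}.
\]
Subtracting the effective vertical divisor $\lambda F_i$ only lowers the right-hand side of the discrepancy inequality, so it never creates new non-canonical centers; thus the pair $(X, -\sum_i \lambda_i^{0} F_i + \frac{1}{n}\mcH)$ is canonical everywhere, and along each $F_i$ it sits exactly at its canonical threshold.

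For property (3) I would reapply the dichotomy to this untwisted pair. Writing $V = \sum_i \lambda_i^{0}F_i$, we have $K_X + (-V + \frac{1}{n}\mcH) \sim_{\mbQ} (\frac{m}{n} - \sum_i \lambda_i^0)F$. The pair is canonical; if in addition $\sum_i\lambda_i^0 \le \frac{m}{n}$, the residual class would be a nonnegative multiple of the nef class $F$, hence nef, and the Noether--Fano--Iskovskikh inequality would force $f$ to be square, a contradiction. Therefore $\sum_i \lambda_i^0 > \frac{m}{n}$ strictly. Finally I would pass from thresholds to the actual coefficients: taking each $\lambda_i$ slightly below $\lambda_i^0$ keeps $(X, -\sum_j\lambda_jF_j + \frac1n\mcH)$ non-canonical along the center $\Gamma_i \subset F_i$ that realizes the threshold (property (2), using again that the $F_j$ are disjoint), and by the strict inequality the $\lambda_i$ can be chosen close enough to the thresholds that $\sum_i\lambda_i > \frac{m}{n}$ persists (property (3)).

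The step I expect to be the main obstacle is the reapplication of the dichotomy to the fiber-perturbed pair $(X, -V + \frac1n\mcH)$: the untwisting divisor $V$ is a pullback from the base $\mbP^1$, but because $f$ is non-square there is no \emph{a priori} compatible divisor on $X'$, so one must verify that subtracting a vertical ($\pi$-pullback) boundary does not obstruct the comparison underlying the inequality. Making this precise---essentially that untwisting by fibers is harmless in the Sarkisov/Noether--Fano--Iskovskikh comparison, since on the generic fiber it is trivial while globally only the nefness of the residual class is used---is where Corti's argument from \cite{Corti} must be invoked and adapted with care; the remaining points (finiteness of maximal centers and the classification above) are routine.
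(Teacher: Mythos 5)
Your proposal follows essentially the same route as the paper: exclude divisorial and horizontal maximal centers, note the remaining centers lie in finitely many fibers, untwist by those fibers up to the canonical threshold, and obtain $\sum_i \lambda_i > m/n$ from a second Noether--Fano--type comparison, then perturb the $\lambda_i$ downward. The one step you flag as needing care is precisely what the paper carries out explicitly instead of re-citing the dichotomy: on a common resolution $p,q$ of $f$ it writes
\[
K_W + \tfrac{1}{n}\mcH_W = p^*\bigl(K_X - \textstyle\sum\lambda_i F_i + \tfrac{1}{n}\mcH\bigr) + (\text{effective exceptional}) + \textstyle\sum\lambda_i F'_i ,
\]
so that if $\sum\lambda_i \le m/n$ the left-hand side is effective, and pushing forward by $q$ makes $K_{X'} + \tfrac{1}{n}\mcH'$ effective, which contradicts the inequality $n > n'$ furnished by the first application of the Noether--Fano--Iskovskikh theorem.
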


\begin{proof}
Let $\mcH' = \left| - n' K_{X'} + {\pi'}^*A' \right|$ be a very ample complete linear system on $X'$ whose birational transform via $f$ is $\mcH$.
By the Noether--Fano--Iskovskikh inequalities \cite[Theorem 2.4]{Corti}, we have $n > n'$ and $K_X + \frac{1}{n} \mcH$ is not canonical.
Let $p \colon W \to X$ and $q \colon W \to X'$ be a resolution of the indeterminacy of $f$.
By assumption, a maximal center of $\mcH$ is contained in a $\pi$-fiber and let $F_1,\dots,F_k$ be the $\pi$-fibers containing the maximal centers of $\mcH$.
For $i = 1,\dots,k$, Let $E_{ij}$ be the prime $p$-exceptional divisors whose center on $X$ is contained in $F_i$ and let $\{G_l\}$ be the other prime $p$-exceptional divisors.
Let $\mcH_W$ and $F'_i$ be the strict transforms of $\mcH$ and $F_i$ via $p$, respectively.

We can write
\[
\begin{split}
K_W &= p^*K_X + \sum a_{i j} E_{ij} + \sum a_l G_l, \\
p^*\mcH &= \mcH_W + \sum m_{i j} E_{ij} + \sum m_l G_l, \\
F'_i &= p^*F_i - \sum c_{i j} E_{ij}.
\end{split}
\]
Since $\mcH$ has a maximal center at some point in $F_i$, there is $j$ such that $m_{i j} > n a_{i j}$. 
We define 
\[
\lambda_i :=
\max_j \left\{ \frac{m_{i j} - n a_{i j}}{n c_{i j}} \right\} > 0.
\]
Let $j (i)$ be such that $\lambda_i = (m_{i j(i)} - n a_{i j(i)})/n c_{i j(i)}$.
The center $\Gamma_i := p (E_{i j (i)}) \subset F_i$ is necessarily a maximal center of $\mcH$. 
Then we have
\[
K_W - \sum \lambda_i F'_i + \frac{1}{n} \mcH_W
= p^* \left( K_X - \sum \lambda_i F_i + \frac{1}{n} \mcH \right) + \sum \alpha_{ij} E_{ij} + G, 
\]
where $G = \sum (a_l - \frac{1}{n} m_l) G_l$ and
\[
\alpha_{ij} = a_{ij} + \lambda_i c_{ij} - \frac{1}{n} m_{ij} \ge 0.
\]
Note that $\alpha_{i j (i)} = 0$, so that $K_X - \sum \lambda_i F_i + \frac{1}{n} \mcH$ is strictly canonical.
Note also that $G$ is effective.

We set $\eta = \frac{m}{n} - \sum \lambda_i$ and assume that $\eta \ge 0$.
Then, since 
\[
K_X - \sum \lambda_i F_i + \frac{1}{n} \mcH \sim_{\mbQ} \eta F, 
\]
we see that the divisor
\[
K_W + \frac{1}{n} \mcH_W \sim_{\mbQ} \eta p^* F + \sum \alpha_{ij} E_{ij} + G + \sum \lambda_i F'_i
\]
is effective.
It then follows that
\[
q_* \left(K_W + \frac{1}{n} \mcH_W \right)
= K_{X'} + \frac{1}{n} \mcH'
= \left(K_{X'} + \frac{1}{n'} \mcH' \right) - \left(\frac{1}{n'} - \frac{1}{n} \right) \mcH'
\]
is effective.
This implies 
\[
\frac{1}{n'} - \frac{1}{n} \le 0.
\] 
This is a contradiction since $n > n'$, and the inequality $\sum \lambda_i > m/n$ is proved.
Now we replace $\lambda_i$ with $\lambda_i - \varepsilon$ for a sufficiently small $\varepsilon > 0$.
Then it is easy to observe that the assertions (i), (ii) and (iii) are satisfied.
\end{proof}

The assumption on horizontal curves is always satisfied for del Pezzo fibrations of degree $1$.

\begin{Lem} \label{lem:exclhorcurve}
Let $\pi \colon X \to \mbP^1$ be a del Pezzo fibration of degree $1$.
Then no horizontal curve is a maximal center.
\end{Lem}

\begin{proof}
See \cite[\S 3]{Puk}.
Although $X$ is assume to be nonsingular in \cite{Puk}, the same proof works for an arbitrary del Pezzo fibration of degee $1$.
\end{proof}

\subsection{An excluding method for singular points}

For a $3$-dimensional terminal quotient singular point $\msp \in U$ of type $\frac{1}{r} (1,a,r-a)$, where $0 < a < r$ and $\gcd (r,a) = 1$, the weighted blowup $\varphi \colon V \to U$ at $\msp$ with weight $\frac{1}{r} (1,a,r-a)$ (which is called as the {\it Kawamata blowup}) is the unique divisorial contraction centered at $\msp$ (see \cite{Kawamata}).
Note that, for the $\varphi$-exceptional divisor $E$, we have $E \cong \mbP (1,a,r-a)$, $K_V = \varphi^* K_U + \frac{1}{r} E$ and
\[
(E^3) = \frac{r^2}{a (r-a)}.
\] 

We give an excluding criterion which is a generalization of \cite[Lemma 3.2.8]{CP} to del Pezzo fibrations.

\begin{Lem} \label{lem:exclcriquot}
Let $\pi \colon X \to \mbP^1$ be a del Pezzo fibration.
Let $\msp \in X$ be a terminal quotient singular point and let $\varphi \colon Y \to X$ the Kawamata blowup of $X$ at $\msp$ with exceptional divisor $E$.
Suppose that there are infinitely many distinct irreducible curves $C_{\lambda}$, $\lambda \in \Lambda$, on $Y$ such that $(-K_Y \cdot C_{\lambda}) \le 0$, $(E \cdot C_{\lambda}) > 0$ and $C_{\lambda}$ is mapped to a point by $\pi \circ \varphi$ for any $\lambda \in \Lambda$.
Then $\msp$ is not a maximal center.
\end{Lem}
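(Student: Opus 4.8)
The plan is to argue by contradiction: suppose $\msp$ is a maximal center. By Definition \ref{def:weakmaxsing} there is then a movable linear system $\mcH \sim_{\mbQ} -nK_X + mF$ with $n > 0$ for which $\msp$ is a maximal center, so that the pair $(X, \frac{1}{n}\mcH)$ is not canonical at $\msp$. The first step is to transfer this non-canonicity onto the Kawamata blowup itself. Since $\msp$ is a terminal quotient singular point and $\varphi \colon Y \to X$ is, by Kawamata's theorem \cite{Kawamata}, the unique divisorial contraction centered at $\msp$, the exceptional divisor $E$ is a maximal singularity of $\mcH$; that is,
\[
\mult_E (\mcH) > n\, a_E (K_X) = \frac{n}{r}.
\]
This is precisely the mechanism underlying \cite[Lemma 3.2.8]{CP}, and it is the only point at which the hypothesis that $\msp$ be a terminal quotient point is used.

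Next I would compute the class of the strict transform $\mcH_Y := \varphi_*^{-1}\mcH$. Writing $\varphi^*\mcH = \mcH_Y + \mult_E(\mcH)\,E$ and substituting $\varphi^*K_X = K_Y - \frac{1}{r}E$, one obtains
\[
\mcH_Y \sim_{\mbQ} -nK_Y + m\,\varphi^*F - e\,E, \qquad e := \mult_E (\mcH) - \frac{n}{r} > 0.
\]
I would then pair this class with the given curves. Because each $C_{\lambda}$ is contracted to a point by $\pi\circ\varphi$, its image lies in a single $\pi$-fiber, and hence $(\varphi^*F \cdot C_{\lambda}) = (F \cdot \varphi_* C_{\lambda}) = 0$ by the projection formula, since $F$ is a fiber class and $\varphi_*C_{\lambda}$ is supported in one fiber. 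Consequently
\[
(\mcH_Y \cdot C_{\lambda}) = n\,(-K_Y \cdot C_{\lambda}) - e\,(E \cdot C_{\lambda}) < 0
\]
for every $\lambda \in \Lambda$, using $(-K_Y\cdot C_{\lambda}) \le 0$, $n, e > 0$ and $(E \cdot C_{\lambda}) > 0$.

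Finally I would extract the contradiction from movability. As the strict transform of a movable system under a morphism, $\mcH_Y$ has no fixed component, so $\Bs\mcH_Y$ has codimension at least $2$ in the threefold $Y$ and is therefore a union of finitely many irreducible curves and points. Any curve meeting $\mcH_Y$ negatively must lie in $\Bs\mcH_Y$, so all the $C_{\lambda}$ would be among the finitely many one-dimensional components of $\Bs\mcH_Y$ --- impossible, since by hypothesis there are infinitely many distinct such curves. Hence $\msp$ is not a maximal center.

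I expect the genuinely delicate step to be the first one, namely deducing that the Kawamata exceptional divisor $E$ is a maximal singularity from the bare assumption that $\msp$ is a maximal center; this rests on Kawamata's uniqueness of the divisorial contraction over a terminal quotient point together with the discrepancy identity $a_E(K_X) = 1/r$. By contrast, the intersection computation and the finiteness of the one-dimensional part of a movable system's base locus are routine, and the hypotheses $(-K_Y\cdot C_{\lambda}) \le 0$, $(E\cdot C_{\lambda}) > 0$ are tailored exactly so that the negativity $(\mcH_Y\cdot C_{\lambda}) < 0$ comes out.
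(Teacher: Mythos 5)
Your argument is correct and coincides with the paper's own proof: both deduce from Kawamata's result that the exceptional divisor $E$ of the Kawamata blowup is a maximal singularity, compute that the strict transform of $\mcH$ on $Y$ has class $-nK_Y + m\varphi^*F - eE$ with $e>0$, pair it with the curves $C_{\lambda}$ (the fiber term vanishing since each $C_{\lambda}$ is vertical) to get a negative intersection, and conclude that infinitely many curves would lie in the base locus of a movable system. The only cosmetic difference is that you make the discrepancy $a_E(K_X)=1/r$ explicit where the paper simply writes the coefficient as some $c>0$.
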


\begin{proof}
Suppose that $\msp \in X$ is a maximal center.
Then there exists a movable linear system $\mcH \sim_{\mbQ} - n K_X + m F$, where $n > 0$, on $X$ such that $(X, \frac{1}{n} \mcH)$ is not canonical at $\msp$.
By \cite[Lemma 7]{Kawamata}, $E$ is a maximal singularity (of $\mcH$).
Hence we have
\[
K_Y + \frac{1}{n} \tilde{\mcH} \sim_{\mbQ} \varphi^* \left(K_X + \frac{1}{n} \mcH \right) - c E
\sim_{\mbQ} \frac{m}{n} \varphi^*F - c E
\]
for some $c > 0$, where $\tilde{\mcH}$ is the birational transform of $\mcH$ via $\varphi$.
Since $(-K_Y \cdot C_{\lambda}) \le 0$, $(E \cdot C_{\lambda}) > 0$ and $(\varphi^*F \cdot C_{\lambda}) = 0$, we have
\[
(\tilde{\mcH} \cdot C_{\lambda}) = n (-K_Y \cdot C_{\lambda}) +  m (\varphi^*F \cdot C_{\lambda}) - c n (E \cdot C_{\lambda}) < 0.
\]
This implies that $C_{\lambda}$ is contained in the base locus of $\tilde{\mcH}$.
This is a contradiction since $\tilde{\mcH}$ is movable.
\end{proof}

\subsection{A result on some weighted hypersurfaces}

Let $\mbP (1,1,2,3)$ be the weighted projective space, defined over $\mbC$, with homogeneous coordinates $x,y,z,w$ of weight $1,1,2,3$, respectively. 
A curve $\ell$ on $\mbP (1,1,2,3)$ defined by $\alpha x + \beta y = z + q (x,y) = 0$ for some $\alpha, \beta \in \mbC$ with $(\alpha,\beta) \ne (0,0)$ and a quadric $q (x,y)$ is called a $\frac{1}{3}$-{\it line}.
For a degree $6$ weighted hypersurface $S$ in $\mbP (1,1,2,3)$, $S$ does not contain any $\frac{1}{3}$-line if and only if $w^2$ appears in its defining polynomial with non-zero coefficient.

\begin{Lem} \label{lem:wsurf}
Let $S$ be an irreducible and reduced weighted hypersurface of degree $6$ in $\mbP (1,1,2,3)$ which does not contain a $\frac{1}{3}$-line.
\begin{enumerate}
\item For any $1$-cycle $\Delta$ on $S$, there is a curve $C \in |\mcO_S (1)|$ such that $C \cap \operatorname{Supp} (\Delta) \ne \emptyset$ but $C$ does not contain any component of $\operatorname{Supp} (\Delta)$.
\item For any point $\msp \in S$, the linear system $|\mcI_{\msp} \mcO_S (2)| \subset |\mcO_S (2)|$ of curves passing through $\msp$ is movable.
\end{enumerate}
\end{Lem}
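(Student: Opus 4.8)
The plan is to exploit the two natural linear systems coming from the weighted coordinates: the pencil $|\mcO_S(1)|$ spanned by $x,y$ for part (i), and the system $|\mcO_S(2)|$ spanned by $x^2,xy,y^2,z$ for part (ii). First I would record the elementary geometry of both. The base locus of the pencil is $B := S \cap \{x=y=0\}$, which sits inside $\{x=y=0\} \cong \mbP(2,3)$ and is cut out there by a polynomial of the form $a z^3 + b w^2$; since $S$ contains no $\frac{1}{3}$-line, the coefficient $b$ of $w^2$ is nonzero, so this restriction is not identically zero and $B$ is a finite set. Moreover $\mcO_S(1)$ is ample, being the restriction of the ample sheaf $\mcO_{\mbP}(1)$. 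For part (ii), the nonvanishing of the $w^2$-coefficient means $[0:0:0:1] \notin S$, so the projection $\rho \colon S \to \mbP(1,1,2)$, $[x:y:z:w] \mapsto [x:y:z]$, is a finite morphism (a double cover); and $|\mcO_S(2)| = \rho^* |\mcO_{\mbP(1,1,2)}(2)|$, where the latter is base-point-free and realizes $\mbP(1,1,2)$ as a quadric cone in $\mbP^3$.

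For (i), suppose some curve $\Gamma \subset \operatorname{Supp}(\Delta)$ were contained in two distinct members of the pencil. Then two independent linear forms in $x,y$ would vanish along $\Gamma$, forcing $\Gamma \subseteq B$, which is impossible since $B$ is finite while $\Gamma$ is a curve. Hence each of the finitely many components of $\operatorname{Supp}(\Delta)$ lies in at most one member of $|\mcO_S(1)|$, so only finitely many members contain a component of $\Delta$. I would then choose $C \in |\mcO_S(1)|$ among the remaining (infinitely many) members; by construction $C$ contains no component of $\Delta$, and since $\mcO_S(1)$ is ample we have $(C \cdot \Gamma) > 0$ for every component $\Gamma$, so $C$ meets $\operatorname{Supp}(\Delta)$. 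This is exactly the assertion.

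For (ii), set $\msq := \rho(\msp)$. Because the four sections spanning $|\mcO_S(2)|$ are $\rho$-pullbacks, a member passes through $\msp$ if and only if the corresponding member of $|\mcO_{\mbP(1,1,2)}(2)|$ passes through $\msq$; hence $|\mcI_{\msp}\mcO_S(2)| = \rho^*\,|\mcI_{\msq}\mcO_{\mbP(1,1,2)}(2)|$. The latter consists of the hyperplane sections of the quadric cone passing through $\msq$, and since the intersection of all hyperplanes through $\msq$ is the single point $\msq$, its base locus is $\{\msq\}$, in particular finite, with no special treatment of the cone vertex required. Pulling back along the finite morphism $\rho$ preserves finiteness, because $\Bs\bigl(\rho^*\,|\mcI_{\msq}\mcO(2)|\bigr) = \rho^{-1}\bigl(\Bs|\mcI_{\msq}\mcO(2)|\bigr)$ is the preimage of a finite set. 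Therefore $|\mcI_{\msp}\mcO_S(2)|$ has no fixed component, i.e.\ it is movable.

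The only genuinely needed input beyond elementary intersection theory is the identification $|\mcO_S(2)| = \rho^* |\mcO_{\mbP(1,1,2)}(2)|$, which rests on $H^0(S,\mcO_S(2)) = H^0(\mbP(1,1,2,3),\mcO(2))$, a consequence of the vanishing $H^1(\mbP(1,1,2,3),\mcO(-4)) = 0$ from the structure sequence of $S$. I expect the main obstacle to be organizing part (ii) so that passing to the subsystem through $\msp$ and pulling back along the double cover $\rho$ provably do not introduce a one-dimensional base component; once the problem is pushed down to the cone and the base locus is computed there, this becomes immediate, so the real work lies in setting up the projection description cleanly rather than in any delicate estimate.
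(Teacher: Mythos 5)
Your proof is correct. Part (i) is essentially the paper's argument: the base locus of the pencil $|\mcO_S(1)|$ is $S\cap(x=y=0)$, which is finite (in fact a single point) because $w^2$ appears in the equation of $S$, so a general member contains no component of $\Delta$ yet meets every component by positivity of $\mcO_S(1)$. Part (ii), however, takes a genuinely different route. The paper argues directly on $S$: it splits into the cases $\msp\in(x=y=0)$ and $\msp\notin(x=y=0)$, and in the latter case normalizes $\msp=(1\!:\!0\!:\!0\!:\!0)$ by a coordinate change, so that $\Bs|\mcI_{\msp}\mcO_S(2)|=(y=z=0)\cap S$ set-theoretically; since $(y=z=0)$ is a $\frac{1}{3}$-line not contained in $S$, this is finite. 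You instead use that $(0\!:\!0\!:\!0\!:\!1)\notin S$ to get a finite projection $\rho\colon S\to\mbP(1,1,2)$, identify $|\mcO_S(2)|$ with the pullback of the hyperplane system on the quadric cone, and compute the base locus downstairs as the single point $\rho(\msp)$. Your version avoids the case split and the coordinate change, uses the no-$\frac{1}{3}$-line hypothesis only once (through the $w^2$ coefficient), and makes explicit the surjectivity of $H^0(\mbP,\mcO(2))\to H^0(S,\mcO_S(2))$, which the paper's proof uses tacitly; the price is the extra setup needed to justify $\mcO_S(2)=\rho^*\mcO_{\mbP(1,1,2)}(2)$ and the finiteness of $\rho$, whereas the paper's computation is shorter and stays entirely inside $\mbP(1,1,2,3)$.
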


\begin{proof}
We see that $(x = y = 0) \cap S$ consists of a single point since $w^2 \in S$.
Thus (i) follows by taking $C$ as a general curve in $|\mcO_S (1)|$.
We prove (ii).
If $\msp \in (x = y = 0) \cap S$, then it is clear that $\Bs |\mcI_{\msp} \mcO_S (2)| = \{\msp\}$.
Suppose that $\msp \notin (x = y = 0)$.
By a coordinate change, we may assume $\msp = (1\!:\!0\!:\!0\!:\!0)$.
Then $\Bs |\mcI_{\msp} \mcO_S (2)| = (y = z = 0) \cap S$ set-theoretically.
Since $(y = z = 0) \subset \mbP (1,1,2,3)$ is a $\frac{1}{3}$-line which is not contained in $S$, we conclude that $\Bs |\mcI_{\msp} \mcO_S (2)|$ is a finite set of points (including $\msp$).
This completes the proof.
\end{proof}

\section{Birational superrigidity of del Pezzo fibration of degree $1$}

Throughout the present section, let $\pi \colon X \to \mbP^1$ be a del Pezzo fibration of degree $1$ satisfying the conditions (i) and (ii) of Theorem \ref{mainthm}.
Neither $K$-condition nor $K^3_{3/2}$-condition is assumed unless explicitly stated.

\begin{Rem}
Let $F_o$, $o \in \mbP^1$, be a fiber of $\pi$.
Then we have the following properties:
\begin{enumerate}
\item $F_o$ is irreducible and reduced.
\item $F_o$ does not contain a $\frac{1}{3}$-line.
\end{enumerate}
It is clear that $F_o$ is irreducible since the Picard number of $X$ is $2$.
By the classification result \cite{MP} of multiple fibers of del Pezzo fibrations, a del Pezzo fibrations of degree $1$ with at most $\frac{1}{2} (1,1,1)$ singular points cannot have a multiple fiber.
This shows (i).
If $F_o$ contains a $\frac{1}{3}$-line $\ell$, then we have 
\[
(-2 K_X \cdot \ell) = (-2 K_X|_{F_o} \cdot \ell) = (\mcO_{F_o} (2) \cdot \ell) = 2/3.
\]
But this cannot happen since $-2 K_X$ is a Cartier divisor and thus $(-K_X \cdot \ell)$ is an integer.
This shows (ii).
\end{Rem}

\begin{Rem} \label{rem:liftdivFX}
Let $F_o$ be a fiber, $o \in \mbP^1$, and $C \in |\mcO_{F_o} (m)|$ a divisor, where $m$ is a positive integer.
Then there exists an integer $n$ and $D \in |- m K_X + n F|$ such that $D|_{F_o} = C$.

Indeed, the restriction sequence 
\[
0 \to \mcO_X (- m K_X + (n-1)F) \to \mcO_X (- m K_X + n F) \to \mcO_{F_o} (m) \to 0
\]
is exact (cf.\ \cite[Proposition 5.26]{KM}).
We see that $- (m+1) K_X + (n-1)F$ is ample for a sufficiently large $n$ so that $H^1 (X, \mcO_X (- m K_X + (n-1)F)) = 0$ by Kawamata-Viehweg vanishing theorem and Serre duality.
It follows that the map 
\[
H^0 (X, \mcO_X (-m K_X + n F)) \to H^0 (F_o, \mcO_{F_o} (m))
\] 
is surjective and the existence of $D$ follows.
\end{Rem}

\begin{Prop} \label{prop:exclsingpt}
No singular point of type $\frac{1}{2} (1,1,1)$ on $X$ is a maximal center.
\end{Prop}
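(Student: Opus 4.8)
The plan is to apply the excluding criterion of Lemma~\ref{lem:exclcriquot}. Let $F_o$ be the fibre of $\pi$ containing $\msp$; under the embedding in condition (ii), $\msp$ is the point $(0\!:\!0\!:\!1\!:\!0)$ of $\mbP(1,1,2,3)$ lying on $F_o$. Write $\varphi \colon Y \to X$ for the Kawamata blowup of $\msp$, with exceptional divisor $E \cong \mbP^2$, so that $K_Y = \varphi^* K_X + \tfrac{1}{2} E$ and $(E^3) = 4$. To invoke Lemma~\ref{lem:exclcriquot} I must produce infinitely many distinct irreducible curves on $Y$, each contracted by $\pi \circ \varphi$, with $(-K_Y \cdot C) \le 0$ and $(E \cdot C) > 0$. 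The source of these curves is the movable linear system $\mcM := |\mcI_{\msp}\, \mcO_{F_o}(2)|$: it is movable by Lemma~\ref{lem:wsurf}(ii), and a dimension count ($h^0(\mcO_{F_o}(2)) = 4$) shows $\dim \mcM = 2$, so its general member is an irreducible curve through $\msp$ and there are infinitely many of them. I take the $C$'s to be the strict transforms on $Y$ of these members, which are automatically vertical.

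Next I would pin down the geometry of the strict transform $\widetilde{F}_o$ of $F_o$. As $F_o$ is a Cartier divisor through the $\tfrac{1}{2}(1,1,1)$ point, its local equation is an invariant function with quadratic leading term, so its weighted multiplicity is $1$ and $\widetilde{F}_o = \varphi^* F - E$. Setting $G := E|_{\widetilde{F}_o}$, the induced map $\widetilde{F}_o \to F_o$ is the minimal resolution of a $\tfrac{1}{4}(1,1)$ point, so $G$ is a conic in $E = \mbP^2$ with $(G^2)_{\widetilde{F}_o} = -4$; this matches both the direct computation $(E^2 \cdot \widetilde{F}_o)_Y = (E^2 \cdot \varphi^* F) - (E^3) = -4$ and the adjunction relation $K_{\widetilde{F}_o} = \varphi^* K_{F_o} - \tfrac{1}{2} G$. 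With $\mcH := \varphi^*(-K_X)|_{\widetilde{F}_o}$ the relevant intersection numbers on $\widetilde{F}_o$ are $(\mcH^2) = (-K_X)^2 \cdot F = 1$, $(\mcH \cdot G) = 0$ and $(G^2) = -4$.

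Finally I would compute the class of the strict transform $\widetilde{C}$ of a general $C \in \mcM$. Since a section of $\mcO(2)$ vanishing at $\msp$ restricts near $\msp$ to a quadratic form in $x,y$, a weighted-multiplicity computation in the chart $(z=1)$ gives that the total transform of $C$ equals $\widetilde{C} + G$, that is $\widetilde{C} \equiv 2\mcH - G$ on $\widetilde{F}_o$. Then
\[
(-K_Y \cdot \widetilde{C}) = \bigl(\mcH - \tfrac{1}{2} G\bigr) \cdot \bigl(2\mcH - G\bigr) = 2 - 2 = 0 \le 0, \qquad (E \cdot \widetilde{C}) = (G \cdot \widetilde{C})_{\widetilde{F}_o} = 4 > 0,
\]
so the family $\{\widetilde{C}\}$ satisfies all the hypotheses of Lemma~\ref{lem:exclcriquot}, and $\msp$ is not a maximal center.

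The main obstacle is the pair of local computations underlying the second and third steps: that $F_o$ meets $\msp$ with weighted multiplicity $1$ (equivalently, that $F_o$ has a $\tfrac{1}{4}(1,1)$ singularity at $\msp$, making $G$ a $(-4)$-curve) and that the general member of $\mcM$ has exceptional multiplicity exactly $1$ along $G$. Both reduce to an explicit analysis of the invariant local equations in orbifold coordinates at the $\tfrac{1}{2}(1,1,1)$ point, which I would carry out in the $(z=1)$ chart of $\mbP(1,1,2,3)$. A minor additional point is to confirm that the general member of $\mcM$ is irreducible, so that Lemma~\ref{lem:exclcriquot} is applied to genuine irreducible curves.
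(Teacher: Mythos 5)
Your overall strategy is the paper's: produce, via Lemma \ref{lem:wsurf}(ii), infinitely many vertical curves through $\msp$ whose strict transforms on the Kawamata blowup meet $-K_Y$ non-positively and $E$ positively, then invoke Lemma \ref{lem:exclcriquot}. The execution differs in a way worth noting. The paper lifts each $C_\lambda \in |\mcI_{\msp}\mcO_{F_o}(2)|$ to a divisor $D_\lambda \in |-2K_X + nF|$ (Remark \ref{rem:liftdivFX}) and computes the triple intersection $(-K_Y\cdot\tilde D_\lambda\cdot\tilde F_o) = 2 - 2ee'$ using only $e, e' \ge 1$, which follows for free from $D_\lambda$ and $F_o$ being Cartier through $\msp$; it never needs the exact multiplicities. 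You instead work on the surface $\tilde F_o$ and compute exact classes ($\tilde F_o = \varphi^*F_o - E$, $\tilde C \equiv 2\mcH - G$), which forces you to verify that both weighted multiplicities are exactly $1$. Those claims are in fact true (in the chart $z=1$ the local equation of $F_o$ has leading term $w^2 + w\ell_1(x,y) + q_2(x,y)$ of weight $1$, because $w^2$ appears in $f$), and your intersection numbers $0$ and $4$ agree with the paper's $2-2ee'$ at $e=e'=1$; but be aware that your identification of $\msp\in F_o$ as a $\frac{1}{4}(1,1)$ point is not needed and not automatic ($F_o$ may be worse there) — only $(E^2\cdot\tilde F_o) = -4$ is used, and that follows from $e'=1$ alone. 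The paper's route is the more economical one precisely because it avoids all of this.

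The genuine gap is the point you flag as "minor": the general member of $\mcM = |\mcI_{\msp}\mcO_{F_o}(2)|$ is \emph{not} irreducible, and this cannot be confirmed. At the point $(0\!:\!0\!:\!1\!:\!0)$ the condition of passing through $\msp$ kills exactly the coefficient of $z$, so $\mcM$ is spanned by $x^2, xy, y^2$; every member is cut by a quadric $q(x,y) = \ell_1\ell_2$ and hence decomposes as $L_1 + L_2$ with $L_i \in |\mcO_{F_o}(1)|$. Your dimension count ($\dim\mcM = 2$) does not imply irreducibility of the general member. Since Lemma \ref{lem:exclcriquot} requires irreducible curves, you must pass to components. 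The repair is easy and can be done two ways: (a) as in the paper, from $(-K_Y\cdot\tilde C) \le 0$ extract a component $\tilde C^{\circ}$ with $(-K_Y\cdot\tilde C^{\circ}) \le 0$ and observe it must meet $E$ (otherwise $(-K_Y\cdot\tilde C^{\circ}) = (-K_X\cdot C^{\circ}) > 0$ since $-K_X$ is $\pi$-ample); or (b) use the pencil $|\mcO_{F_o}(1)|$ directly — every member passes through $\msp$ because $(x=y=0)\cap F_o = \{\msp\}$, the general member $L$ is irreducible, and the same computation gives $\tilde L \equiv \mcH - \tfrac12 G$, hence $(-K_Y\cdot\tilde L) = 1 - 1 = 0$ and $(E\cdot\tilde L) = 2 > 0$. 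With either fix your argument closes.
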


\begin{proof}
Let $F_o$ be the $\pi$-fiber containing $\msp$.
By Lemma \ref{lem:wsurf}.(ii), there are infinitely many curves $C_{\lambda} \in |\mcO_{F_o} (2)|$ passing through $\msp$.
By Remark \ref{rem:liftdivFX}, for each $\lambda$, there exists $D_{\lambda} \in |- 2 K_X + n F|$ such that $D_{\lambda}|_{F_o} = C_{\lambda}$.
Let $\varphi \colon Y \to X$ be the Kawamata blowup at $\msp$ with exceptional divisor $E \cong \mbP^2$.
We write $\varphi^* D_{\lambda} = \tilde{D}_{\lambda} + e E$ and $\varphi^*F_o = \tilde{F}_o + e' E$, where $\tilde{D}_{\lambda}$ and $\tilde{F}_o$ are proper transform on $Y$ of $D_{\lambda}$ and $F_o$, respectively.
We have $e, e' \ge 1$ since $D_{\lambda}, F_o \ni \msp$ are Cartier divisors on $X$.
We have
\[
(-K_Y \cdot \tilde{D}_{\lambda} \cdot \tilde{F}_o) = (-K_X \cdot D_{\lambda} \cdot F_o) - \frac{e e'}{2} (E^3) = 2 - 2 e e' \le 0.
\]
Denote by $\tilde{C}_{\lambda}$ the proper transform of the curve $C_{\lambda}$ on $Y$.
If we write $\tilde{D}_{\lambda} \cdot \tilde{F}_o = \tilde{C}_{\lambda} + \Delta$ for some effective $1$-cycle $\Delta$ on $Y$, then the support of $\Delta$ is contained in $E$ and, in particular, $(-K_Y \cdot \Delta) \ge 0$.
Thus
\[
(-K_Y \cdot \tilde{C}_{\lambda}) = (-K_Y \cdot \tilde{D}_{\lambda} \cdot \tilde{F}_o) - (-K_Y \cdot \Delta) \le 0.
\]
Hence there exists a component $\tilde{C}_{\lambda}^{\circ}$ of $\tilde{C}_{\lambda}$ such that $(-K_Y \cdot \tilde{C}_{\lambda}^{\circ}) \le 0$.
The curve $\tilde{C}_{\lambda}^{\circ}$ is a proper transform of $C_{\lambda}^{\circ} = \varphi_* \tilde{C}_{\lambda}^{\circ}$ and necessarily satisfies $(E \cdot \tilde{C}_{\lambda}^{\circ}) > 0$.

Therefore there exist infinitely many curves $\tilde{C}_{\lambda}^{\circ}$ on $Y$ which intersect $-K_Y$ non-positively and $E$ positively.
By Lemma \ref{lem:exclcriquot}, $\msp$ is not a maximal center.
\end{proof}

\begin{Prop} \label{prop:exclcurve}
No curve on $X$ is a maximal center of $\mcH$. 
\end{Prop}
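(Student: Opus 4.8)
The plan is to reduce to vertical curves and then to split according to whether the curve meets a singular point. Suppose for contradiction that some curve $\Gamma$ on $X$ is a maximal center, so that there is a movable linear system $\mcH \sim_{\mbQ} -nK_X + mF$ with $n > 0$ for which $(X, \frac{1}{n}\mcH)$ fails to be canonical along $\Gamma$. By Lemma \ref{lem:exclhorcurve} no horizontal curve is a maximal center, so $\Gamma$ is vertical, i.e.\ contained in a single fiber $F_o$. Since the generic point of $\Gamma$ lies in the smooth locus of $X$, non-canonicity along $\Gamma$ forces $\mult_\Gamma \mcH > n$ (the blowup of the generic point of $\Gamma$ has discrepancy $1$). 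I would then treat separately the case where $\Gamma$ passes through a singular point and the case where $\Gamma$ lies entirely in the smooth locus.

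First, suppose $\Gamma$ passes through a $\frac{1}{2}(1,1,1)$ point $\msp \in X$. Since $(X, \frac{1}{n}\mcH)$ is not canonical along $\Gamma$, it is in particular not canonical at $\msp$. By \cite[Lemma 7]{Kawamata}, exactly as invoked in the proof of Proposition \ref{prop:exclsingpt}, the exceptional divisor $E$ of the Kawamata blowup at $\msp$ is then a maximal singularity of $\mcH$, i.e.\ $\mult_E \mcH > n\, a_E(K_X)$. As the center of $E$ on $X$ is the point $\msp$, this says precisely that $\msp$ is a maximal center of $\mcH$, contradicting Proposition \ref{prop:exclsingpt}. (Concretely one may also see this by hand: for the $\frac{1}{2}(1,1,1)$ blowup $\mult_E \mcH = \frac{1}{2}\operatorname{ord}_\msp \mcH \ge \frac{1}{2}\mult_\Gamma \mcH > \frac{n}{2} = n\, a_E(K_X)$.)

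Second, suppose $\Gamma$ is contained in the smooth locus of $X$. Writing $A := \mcO_{F_o}(1)$, condition (ii) gives $-K_X|_{F_o} \sim A$ with $A^2 = 1$, while $F|_{F_o} = 0$; hence the restriction $\Delta := H|_{F_o}$ of a general member $H \in \mcH$ (which does not contain $F_o$, as $\mcH$ is movable) is an effective $1$-cycle on $F_o$ with $\Delta \sim nA$ and $\mult_\Gamma \Delta \ge \mult_\Gamma \mcH > n$. Applying Lemma \ref{lem:wsurf}.(i) to $\Delta$ produces a curve $C \in |A|$ containing no component of $\operatorname{Supp}\Delta$; since $A = -K_X|_{F_o}$ is ample and $\Gamma$ is a component of $\operatorname{Supp}\Delta$, $C$ nevertheless meets $\Gamma$ in the proper sense. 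Intersecting on $F_o$,
\[
n = A \cdot \Delta = C \cdot \Delta \ \ge\ \mult_\Gamma \Delta \cdot (A \cdot \Gamma)\ >\ n\,(A \cdot \Gamma),
\]
so $A \cdot \Gamma < 1$. But $A \cdot \Gamma = -K_X \cdot \Gamma$ is the degree of the line bundle $\mcO_X(-K_X)|_\Gamma$ (Cartier near $\Gamma$ as $\Gamma$ avoids the singular points), which is a positive integer since $-K_X$ is $\pi$-ample and $\Gamma$ is vertical; in particular $A \cdot \Gamma \ge 1$, a contradiction.

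The main obstacle is the first case. A naive intersection estimate fails there: when $\Gamma$ runs through the $\frac{1}{2}(1,1,1)$ point $\msp$, the bound $A \cdot \Gamma < 1$ only yields $A \cdot \Gamma = \frac{1}{2}$, and the excess intersection of $\Delta$ with a member of $|A|$ is absorbed at $\msp$ (every member of $|A|$ passes through $\msp$), so no contradiction is visible on $F_o$ alone. The resolution is to pass to the Kawamata blowup and quote the already-established exclusion of singular points, Proposition \ref{prop:exclsingpt}; the crucial input is that non-canonicity localized at a terminal quotient singularity forces the Kawamata exceptional divisor to be a maximal singularity. It is worth noting that, in contrast with the exclusion of nonsingular points, this argument uses neither the $K$-condition nor the $K^3_{3/2}$-condition.
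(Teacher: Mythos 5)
Your proof is correct and follows essentially the same route as the paper: the case of a curve through a $\frac{1}{2}(1,1,1)$ point is reduced to Proposition \ref{prop:exclsingpt} via \cite[Lemma 7]{Kawamata}, and the smooth case is excluded by intersecting with a curve in $|\mcO_{F_o}(1)|$ supplied by Lemma \ref{lem:wsurf}.(i). The only (harmless) difference is cosmetic: you apply Lemma \ref{lem:wsurf}.(i) to the whole cycle $H|_{F_o}$ and conclude from the integrality of $(-K_X\cdot\Gamma)$, whereas the paper applies it to $\Gamma$ alone and concludes from the local multiplicity $\mult_{\msp}(H)$ at a point of $C\cap\Gamma$.
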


\begin{proof}
Let $\Gamma \subset X$ be an irreducible and reduced curve and assume that $\Gamma$ is a maximal center of a movable linear system $\mcH \subset \left|- n K_X + m F \right|$.
If $\Gamma$ passes through a $\frac{1}{2} (1,1,1)$ point, then, by \cite[Lemma 7]{Kawamata} (see also \cite[Theorem 2.2.1]{CP}), the $\frac{1}{2} (1,1,1)$ point is a maximal center of $\mcH$, which is impossible by Proposition \ref{prop:exclsingpt}.
Thus it is enough to derive a contradiction assuming that $\Gamma$ is contained in the nonsingular locus of $X$.
Note that we have $\mult_{\Gamma} (\mcH) > n$.
The rest of the proof is the same as the one given by Pukhlikov in \cite{Puk}.
We include it for readers' convenience.

By Lemma \ref{lem:exclhorcurve}, we may assume that $\Gamma$ is vertical and let $F_o$, $o \in \mbP^1$, be the $\pi$-fiber containing $\Gamma$.
By Lemma \ref{lem:wsurf}.(i), there is a curve $C \in |\mcO_{F_o} (1)|$ which intersects $\Gamma$ and but does not contain $\Gamma$.
Then, by Remark \ref{rem:liftdivFX}, we can take $D \in \left| - K_X + l F \right|$ (for some large $l > 0$) such that $C = D \cdot F_o$.
Take a point $\msp \in \Gamma \cap C$ and let $H \in \mcH$ be a general member.
We have 
\[
n = (H \cdot D \cdot F_o) = (H \cdot C) \ge \mult_{\msp} (H) \ge \mult_{\Gamma} (H) > n.
\]
This is a contradiction.
Therefore $\Gamma$ is not a maximal center.
\end{proof}

\begin{Rem} \label{rem:nefthreshold}
We explain a formulation of the nef threshold $\nef (X/\mbP^1)$ in terms of intersection number.
We denote by $R \subset \bNE (X)$ the extremal ray which is not generated by a curve contracted by $\pi$.
Let $\xi \in R$ be a class.
Then we have $(-K_X + \nef (V/\mbP^1) F \cdot \xi) = 0$ and $(F \cdot \xi) > 0$.
It follows that
\[
\nef (X/\mbP^1) = - \frac{(-K_X \cdot \xi)}{(F \cdot \xi)}.
\]
\end{Rem}

\begin{Prop} \label{prop:square}
Suppose that we are given a birational map $f \colon X \ratmap X'$ to a Mori fiber space $\pi' \colon X' \to S'$ and let $\mcH \subset \left|- n K_X + m F \right|$ be a movable linear system associated to $f$.
If $m \ge 0$ and $X/\mbP^1$ satisfies $K^3_{3/2}$-condition, then $f$ is square.
\end{Prop}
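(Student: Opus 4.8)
The plan is to argue by contradiction, assuming $f$ is not square, and to combine the exclusion results already established with a single global estimate on the self-intersection cycle of $\mcH$, into which the $K^3_{3/2}$-condition enters exactly once.

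First I would apply Proposition \ref{prop:birrigbasic}: since $m \ge 0$ and, by Lemma \ref{lem:exclhorcurve}, no horizontal curve is a maximal center, we obtain subvarieties $\Gamma_1,\dots,\Gamma_k$ lying in distinct fibers $F_1,\dots,F_k$ and rationals $\lambda_i > 0$ with $\sum_i \lambda_i > m/n$ such that $(X, -\sum_j \lambda_j F_j + \frac{1}{n}\mcH)$ is not canonical along each $\Gamma_i$. Each $\Gamma_i$ is then a maximal center, so by Propositions \ref{prop:exclsingpt} and \ref{prop:exclcurve} it is neither a $\frac{1}{2}(1,1,1)$ point nor a curve; hence every $\Gamma_i$ is a nonsingular point $\msp_i \in F_i$. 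For the exceptional divisor $E_i$ realizing $\lambda_i$, the construction in Proposition \ref{prop:birrigbasic} records the identity
\[
\frac{1}{n}\operatorname{mult}_{E_i}(\mcH) = a_{E_i}(K_X) + \lambda_i \operatorname{mult}_{E_i}(F_i),
\]
which I would use as the source of the $\lambda_i$-dependence later.

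Next comes the global estimate. Choosing general $H_1, H_2 \in \mcH$, the movability of $\mcH$ makes $Z := H_1 \cdot H_2$ an effective $1$-cycle, and from $\mcH \subset \left|-nK_X + mF\right|$ together with $(-K_X)^2\cdot F = 1$ and $F^2 = 0$ one computes
\[
(-K_X \cdot Z) = n^2(-K_X)^3 + 2nm, \qquad (F \cdot Z) = n^2.
\]
Write $Z = Z_h + \sum_o Z_o$ for the horizontal part and the vertical parts $Z_o \subset F_o$, and set $\mu := \nef(X/\mbP^1)$, so that $-K_X + \mu F$ is nef. Since $(F \cdot Z_o) = 0$ and the intersection of a nef class with an effective cycle is nonnegative, I would deduce
\[
\sum_i (-K_X \cdot Z_i) \le (-K_X + \mu F)\cdot Z = n^2\big((-K_X)^3 + \mu\big) + 2nm \le \frac{3}{2}\,n^2 + 2nm,
\]
where the final inequality is precisely the $K^3_{3/2}$-condition. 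This is the only point at which the hypothesis is used.

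Finally, the local estimate, which is the heart of the matter. Fixing $i$ and restricting to $F_i$, the relations $F_i|_{F_i} \sim 0$ and $-K_X|_{F_i} \sim \mcO_{F_i}(1)$ give $\mcH|_{F_i} \subset |n\,\mcO_{F_i}(1)|$, and inversion of adjunction applied to $\big(X,\, F_i + (-\lambda_i F_i + \frac{1}{n}\mcH)\big)$ shows that $(F_i, \frac{1}{n}\mcH|_{F_i})$ is non--log canonical at $\msp_i$. Because $|\mcO_{F_i}(1)|$ is only a pencil with a single base point (Lemma \ref{lem:wsurf}), this non--log canonicity must be carried by a fixed vertical curve $\Phi_i \subset F_i$ lying in $\Bs\mcH$; using the discrepancy identity above to track the $\lambda_i$-dependence, and Lemma \ref{lem:wsurf}.(i) to convert multiplicities at $\msp_i$ into $\mcO_{F_i}(1)$-degrees (with Kawamata-blowup corrections, exactly as in Proposition \ref{prop:exclsingpt}, for any $\frac{1}{2}(1,1,1)$ points lying on $\Phi_i$), I would aim to prove a lower bound of the form $(-K_X \cdot Z_i) > \big(2\lambda_i + \frac{3}{2}\big)n^2$. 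Summing over $i$ and invoking $\sum_i \lambda_i > m/n$ would then yield
\[
\sum_i (-K_X \cdot Z_i) > 2n^2\sum_i \lambda_i + \frac{3}{2}\,n^2 > 2nm + \frac{3}{2}\,n^2,
\]
contradicting the global bound. I expect the sharp local estimate to be the main obstacle: extracting the correct constant so that the threshold matches $3/2$ forces a careful analysis of the fixed curve $\Phi_i$ relative to the single base point of the $\mcO_{F_i}(1)$-pencil, and of the contribution of the $\frac{1}{2}(1,1,1)$ points through the Kawamata blowup.
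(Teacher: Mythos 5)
Your skeleton is right and matches the paper's: argue by contradiction, use Proposition \ref{prop:birrigbasic} together with Lemma \ref{lem:exclhorcurve} and Propositions \ref{prop:exclsingpt}, \ref{prop:exclcurve} to reduce to nonsingular points $\msp_i$ in distinct fibers with $\sum\lambda_i > m/n$, and then play a local lower bound for $(-K_X\cdot Z_{F_i})$ against a global upper bound for $(-K_X\cdot Z_{\mathrm{v}})$. Your global estimate is correct and essentially the paper's (the paper writes $[Z_{\mathrm{h}}]=\alpha\xi+\beta\ell$ on the extremal rays of $\bNE(X)$ and uses Remark \ref{rem:nefthreshold}; your ``pair the nef class $-K_X+\nef(X/\mbP^1)F$ against the effective cycle $Z$'' phrasing gives the same inequality). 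You also correctly identify the target local bound $(-K_X\cdot Z_{F_i})\ge\bigl(\tfrac{3}{2}+2\lambda_i\bigr)n^2$.

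The gap is that you never prove that local bound, and the route you sketch for it is not the one that works. The paper's proof of this step is a direct application of the Corti inequality (\cite[Theorem 3.12, Remark 3.13]{Corti}): non-canonicity of $\bigl(X,-\lambda_iF_i+\tfrac{1}{n}\mcH\bigr)$ at the nonsingular point $\msp_i$ yields some $0<t_i\le 1$ with $\mult_{\msp_i}Z_{\mathrm{h}}+t_i\mult_{\msp_i}Z_{F_i}\ge 4(1+\lambda_it_i)n^2$; combined with $\mult_{\msp_i}Z_{\mathrm{h}}\le(F\cdot Z_{\mathrm{h}})=n^2$ (a bound you do not record) this gives $\mult_{\msp_i}Z_{F_i}\ge 3n^2+4\lambda_in^2$, and then Lemma \ref{lem:wsurf}(ii) produces a curve $C_i\in|\mcO_{F_i}(2)|$ through $\msp_i$ avoiding the components of $Z_{F_i}$, which lifts by Remark \ref{rem:liftdivFX} to $D_i\in|-2K_X+lF|$ and yields $(-K_X\cdot Z_{F_i})=\tfrac{1}{2}(D_i\cdot Z_{F_i})\ge\tfrac{1}{2}\mult_{\msp_i}Z_{F_i}$. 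The factor $\tfrac{1}{2}$ coming from the degree-$2$ system is exactly what turns $3+4\lambda_i$ into $\tfrac{3}{2}+2\lambda_i$ and makes the threshold $3/2$ come out. Your alternative plan --- inversion of adjunction to get non-log-canonicity of $(F_i,\tfrac{1}{n}\mcH|_{F_i})$ and an analysis of a fixed curve $\Phi_i$ relative to the base point of the anticanonical pencil --- is not carried out, does not obviously produce the constant $4(1+\lambda_it_i)n^2$ (in particular the $\lambda_i$-dependence, which in the paper enters through the Corti inequality and not through the discrepancy identity you cite), and you yourself flag it as the main obstacle. As written, the proof is incomplete at precisely its quantitative core.
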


\begin{proof}
We assume that $f$ is not square.
In view of Lemma \ref{lem:exclhorcurve}, Propositions \ref{prop:exclsingpt}, \ref{prop:exclcurve} and the assumption $m \ge 0$, Proposition \ref{prop:birrigbasic} implies the existence of nonsingular points $\msp_1,\dots,\msp_k \in X$ and positive rational numbers $\lambda_1,\dots,\lambda_k$ such that $(X, - \sum \lambda_i F_i + \frac{1}{n} \mcH)$ is not canonical at each $\msp_i$ and $\sum \lambda_i > m/n$, where $F_i$ is the $\pi$-fiber containing $\msp_i$.

Let $H_1, H_2 \in \mcH$ be general members and set $Z = H_1 \cdot H_2$ which is an effective $1$-cycle on $X$.
We decompose $Z = Z_{\mathrm{h}} + Z_{\mathrm{v}}$ into the horizontal component $Z_{\mathrm{h}}$ and the vertical component $Z_{\mathrm{v}}$.
We denote by $Z_{F_i}$ the component of $Z_{\mathrm{v}}$ whose support is contained in the fiber $F_i$, so that $Z_{\mathrm{v}} = \sum Z_{F_i}$. 

By the Corti inequality \cite[Theorem 3.12, Remark 3.13]{Corti}, for $i = 1,\dots,k$, there is a number $t_i$ with $0 < t_i \le 1$ such that
\[
\mult_{\msp_i} Z_{\mathrm{h}} + t_i \mult_{\msp_i} Z_{F_i} \ge 4 (1 + \lambda_i t_i) n^2.
\]
We have 
\[
\mult_{\msp_i} Z_{\mathrm{h}} \le (F_i \cdot Z_{\mathrm{h}}) = (F \cdot Z_{\mathrm{h}}) = (F \cdot Z) = (F \cdot H_1 \cdot H_2) = n^2.
\]
It follows that
\[
\mult_{\msp_i} Z_{F_i} \ge \frac{3}{t_i} n^2 + 4 \lambda_i n^2 \ge 3 n^2 + 4 \lambda_i n^2.
\]
By Lemma \ref{lem:wsurf}, there exists a curve $C_i \in |\mcO_{F_i} (2)|$ which passes through $\msp_i$ and which does not contain any component of $\operatorname{Supp} (Z_{F_i})$.
By Remark \ref{rem:liftdivFX}, there is a divisor $D_i \in |- 2 K_X + l F|$ such that $D_i|_{F_i} = C_i$.
Then we have
\[
(-K_X \cdot Z_{F_i}) = \frac{1}{2} (D_i \cdot Z_{F_i})  
\ge \frac{1}{2} \mult_{\msp_i} Z_{F_i} \ge \frac{3}{2} n^2 + 2 \lambda_i n^2
\]
and, by taking into account the inequality $\sum \lambda_i > m/n$, we have
\begin{equation} \label{eq:nsptdegineq1}
(-K_X \cdot Z_{\mathrm{v}}) = \sum_{i=1}^k (-K_X \cdot Z_{F_i})
\ge \frac{3}{2} k  n^2 + 2 n^2 \sum_{i=1}^k \lambda_i
> \frac{3}{2} n^2 + 2 m n.
\end{equation}

Let $\ell \in \bNE (X)$ be the class such that $\ell = \mcO_F (1)$ so that $\mbR_{\ge 0} \cdot \ell$ is the extremal ray corresponding to $\pi$.
Let $\xi \in \bNE (X)$ be the class generating the other extremal ray of $\bNE (X)$.
We have $(F \cdot \xi) > 0$.
By multiplying a suitable positive rational number, we assume that $(F \cdot \xi) = 1$.
We write $[Z_{\mathrm{h}}] = \alpha \xi + \beta \ell$, where $[Z_{\mathrm{h}}]$ denotes the class in $\bNE (X)$.
Note that $\alpha, \beta \ge 0$.
Since $\alpha = (F \cdot Z_{\mathrm{h}}) = n^2$ and $(-K_X \cdot \xi) = - \nef (X/\mbP^1)$ by Remark \ref{rem:nefthreshold}, we have
\[
(-K_X \cdot Z_{\mathrm{h}}) = n^2 (-K_X \cdot \xi) + \beta = -n^2 \nef (X/\mbP^1) + \beta \ge - n^2 \nef (X/\mbP^1).
\]
Moreover we have
\[
(-K_X \cdot Z) = (-K_X) \cdot (- n K_X + m F)^2 = n^2 (-K_X)^3 + 2 m n.
\]
Thus,
\begin{equation} \label{eq:nsptdegineq2}
\begin{split}
(-K_X \cdot Z_{\mathrm{v}}) &= (-K_X \cdot Z) - (-K_X \cdot Z_{\mathrm{h}}) \\
&\le n^2 (-K_X)^3 + 2 m n + n^2 \nef (X/\mbP^1) \\
&\le \frac{3}{2} n^2 + 2 m n,
\end{split}
\end{equation}
where the last inequality follows from $K^3_{3/2}$-condition.
The inequalities \eqref{eq:nsptdegineq1} and \eqref{eq:nsptdegineq2} are impossible and the proof is completed.
\end{proof}

\begin{proof}[of \emph{Theorem \ref{mainthm}}]
Let $X/\mbP^1$ be a del Pezzo fibration of degree $1$ as in Theorem \ref{mainthm}.
Suppose we are given a birational map $f \colon X \ratmap X'$ to a Mori fiber space $X'/S'$ and let $\mcH \sim_{\mbQ} - n K_X + m F$ be the birational transform of a very ample complete linear system $\mcH'$ on $X'$.
By the $K$-condition, we have $m \ge 0$.
Thus, by Proposition \ref{prop:square}, $f$ is square.
Therefore $X/\mbP^1$ is birational superrigid.
\end{proof}

\section{del Pezzo fibrations embedded as a hypersurface in a toric $\mbP (1,1,2,3)$-bundles over $\mbP^1$}
\label{sec:birrigdP}

\subsection{Toric $\mbP (1,1,2,3)$-bundles over $\mbP^1$}
\label{sec:toricbundles}

We construct del Pezzo fibrations $X/\mbP^1$ as hypersurfaces in suitable toric $\mbP (1,1,2,3)$-bundles over $\mbP^1$.
We refer readers to \cite{CLS} for Cox rings of toric varieties.

For $\lambda,\mu,\nu \in \mbZ$ with $\lambda \ge 0$, let $P = P (\lambda,\mu,\nu)$ be the projective simplicial toric variety with Cox ring 
\[
\Cox (P) = \mbC [u,v,x,y,z,w]
\] 
which is $\mbZ^2$-graded as
\[
\begin{pmatrix}
u & v & x & y & z & w \\
1 & 1 & 0 & \lambda & \mu & \nu \\
0 & 0 & 1 & 1 & 2 & 3
\end{pmatrix}
\]
and with the irrelevant ideal $I = (u,v) \cap (x,y,z,w)$, that is, $P$ is the geometric quotient
\[
P = (\mbA^6 \setminus V (I)) / (\mbC^*)^2,
\]
where the action of $(\mbC^*)^2$ on $\mbA^6 = \Spec \Cox (P)$ is given by the above $2 \times 6$ matrix.

\begin{Rem} \label{rem:toricbundle}
A priori, as a toric $\mbP (1,1,2,3)$-bundle $P$ over $\mbP^1$, we need to consider $P$ given by an action of the form
\[
\begin{pmatrix}
u & v & x & y & z & w \\
1 & 1 & \alpha & \beta & \gamma & \delta \\
0 & 0 & 1 & 1 & 2 & 3
\end{pmatrix},
\]
where $\alpha,\beta,\gamma,\delta \in \mbZ$.
Considering a suitable $\SL_2 (\mbZ)$-action on the matrix, we see that $P = P (\lambda,\mu,\nu)$ for some $\lambda,\mu,\nu \in \mbZ$.
Moreover, possibly interchanging $x$ and $y$, we may assume that $\lambda \ge 0$.
\end{Rem}

Let $P = P (\lambda,\mu,\nu)$ be as above.
The Weil divisor class group $\Cl (P)$ is isomorphic to $\mbZ^2$.
There is a natural morphism $\Pi \colon P \to \mbP^1$ defined as the projection to the coordinates $u,v$, and this realizes $P$ as a weighted projective space bundle over $\mbP^1$ whose fibers are $\mbP (1,1,2,3)$.

Let $F$ and $H$ be the Weil divisor classs corresponding to ${}^t (1, 0)$ and ${}^t (0,1)$, respectively.
Note that $F$ is the $\Pi$-fiber class and the restriction of $H$ to a fiber corresponds to $\mcO_{\mbP (1,1,2,3)} (1)$.
For a coordinate $t \in \{u,v,x,y,z,w\}$, we denote by $D_t$ the zero locus of $t$ which is a torus invariant Weil divisor.
Note that 
\[
D_u, D_v \sim F,
\] 
\[
D_x \sim H, \ 
D_y \sim H + \lambda F, \ 
D_z \sim 2 H + \mu F, \ 
D_w \sim 3 H + \nu F.
\]
There are exactly four irreducible and reduced torus invariant curves, denoted by $C_x, C_y, C_z, C_w$, which are not contained in a $\Pi$-fiber.
They are intersections of three distinct divisors from $\{D_x, D_y, D_z, D_w\}$, that is, 
\[
C_z = D_x \cdot D_y \cdot D_w, \ 
C_w = D_x \cdot D_y \cdot D_z, 
\]
and similarly for $C_x$ and $C_y$.

It is easy to see that $\Sing (P) = C_z \cup C_w$ and singularities of $P$ along $C_z$ and $C_w$ are of type $\frac{1}{2} (1,1,1) \times \mbP^1$ and $\frac{1}{3} (1,1,2) \times \mbP^1$, respectively.

%

\begin{Lem} \label{lem:intnumP}
For the intersection numbers on $P$, we have
\[
(H^4) = - \frac{6 \lambda + 3 \mu + 2 \nu}{36}, \ 
(H^3 \cdot F) = \frac{1}{6}, \ 
(H^2 \cdot F^2) = (H \cdot F^3) = (F^4) = 0.
\]
\end{Lem}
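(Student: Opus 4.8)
The plan is to work inside the rational Chow ring $A^*(P)_{\mbQ}$ of the simplicial projective toric variety $P$, using its Stanley--Reisner presentation. The classes of the torus invariant divisors $D_u, D_v, D_x, D_y, D_z, D_w$ generate $A^*(P)_{\mbQ}$ subject to two kinds of relations: the linear equivalences recorded above, namely $D_u \sim D_v \sim F$, $D_x \sim H$, $D_y \sim H + \lambda F$, $D_z \sim 2H + \mu F$ and $D_w \sim 3H + \nu F$; and the Stanley--Reisner relations coming from the two primitive collections $\{u,v\}$ and $\{x,y,z,w\}$ read off from the irrelevant ideal $I = (u,v) \cap (x,y,z,w)$, that is, $D_u \cdot D_v = 0$ and $D_x \cdot D_y \cdot D_z \cdot D_w = 0$. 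Everything reduces to manipulating these relations.

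First I would dispose of the three vanishing statements. Since $F \sim D_u \sim D_v$ and $D_u \cdot D_v = 0$, we obtain $F^2 = 0$ in $A^*(P)_{\mbQ}$; multiplying by $H^2$, by $HF$, and by $F^2$ respectively yields $(H^2 \cdot F^2) = (H \cdot F^3) = (F^4) = 0$ at once. (Alternatively $F = \Pi^*(\mathrm{pt})$ is pulled back from $\mbP^1$, so $F^2 = 0$ is geometrically immediate.) Next I would fix the normalization by restricting to a fiber: a fiber of $\Pi$ is a copy of $\mbP(1,1,2,3)$ on which $H$ restricts to $\mcO_{\mbP(1,1,2,3)}(1)$, so $(H^3 \cdot F) = (H|_F)^3 = (\mcO_{\mbP(1,1,2,3)}(1)^3) = \tfrac{1}{1\cdot 1\cdot 2\cdot 3} = \tfrac{1}{6}$. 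This is the single top intersection number that anchors the degree map on $A^4(P)_{\mbQ} \cong \mbQ$.

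The main computation, and the only step that is not immediate, is $(H^4)$. Here I would expand the Stanley--Reisner relation $0 = D_x \cdot D_y \cdot D_z \cdot D_w = H (H + \lambda F)(2H + \mu F)(3H + \nu F)$, discarding every term divisible by $F^2$. A short expansion gives $6 H^4 + (6\lambda + 3\mu + 2\nu)\, H^3 \cdot F = 0$, whence $(H^4) = -\tfrac{6\lambda + 3\mu + 2\nu}{6}(H^3 \cdot F) = -\tfrac{6\lambda + 3\mu + 2\nu}{36}$ after substituting $(H^3 \cdot F) = \tfrac{1}{6}$.

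The one point requiring care, which I regard as the main obstacle, is justifying the presentation itself: that $\{u,v\}$ and $\{x,y,z,w\}$ are \emph{exactly} the primitive collections attached to the irrelevant ideal, so that $D_u D_v$ and $D_x D_y D_z D_w$ generate the Stanley--Reisner ideal and no further monomial relation intervenes, together with the fact that the displayed linear equivalences form a complete set of linear relations. Granting these two structural facts about the toric Chow ring, every step above is either the routine expansion indicated or the single fiber restriction that sets the normalization, and the stated intersection numbers follow.
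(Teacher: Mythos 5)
Your proposal is correct and follows essentially the same route as the paper: the vanishings from $F^2=0$, the normalization $(H^3\cdot F)=\tfrac16$ by restricting to a fiber $\mbP(1,1,2,3)$, and the expansion of $D_x\cdot D_y\cdot D_z\cdot D_w=0$ to extract $(H^4)$. The only cosmetic difference is that you phrase the key relation via the Stanley--Reisner presentation of the Chow ring, whereas the paper simply observes that $D_x\cap D_y\cap D_z\cap D_w=\emptyset$; the computation is identical.
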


\begin{proof}
The assertion $(H^2 \cdot F^2) = (H \cdot F^3) = (F^4) = 0$ is obvious since $F$ is the fiber class of $\Pi \colon P \to \mbP^1$.
Since $F \cong \mbP (1,1,2,3)$ and $\mcO_F (H|_F) \cong \mcO_F (1)$, we have
\[
(H^3 \cdot F) = (H|_F^3)_F = \frac{1}{6}.
\]
Since $D_x \cap D_y \cap D_z \cap D_w = \emptyset$, we have
\[
\begin{split}
0 &= (D_x \cdot D_y \cdot D_z \cdot D_w) = (H \cdot H + \lambda F \cdot 2 H + \mu F \cdot 3 H + \nu F) \\
&= 6 (H^4) + (6 \lambda + 3 \mu + 2 \nu) (H^3 \cdot F) = 6 (H^4) + \frac{6 \lambda + 3 \mu + 2 \nu}{6},
\end{split}
\] 
which completes the proof.
\end{proof}

\subsection{del Pezzo fibrations of degree $1$ with $\frac{1}{2} (1,1,1)$ points not satisfying the $K^2$-condition}

By a {\it del Pezzo fibration $\pi \colon X \to \mbP^1$ of degree $1$ embedded in a toric $\mbP (1,1,2,3)$-bundle as a hypersurface}, we mean a hypersurface $X$ of $P (\lambda,\mu,\nu)$ for some $\lambda,\mu,\nu$ such that $\pi = \Pi|_X$.

The aim of this section is to prove the following.

\begin{Thm} \label{thm:embdP}
Let $X/\mbP^1$ be a singular del Pezzo fibration of degree $1$ with only singular points of type $\frac{1}{2} (1,1,1)$ embedded in a toric $\mbP (1,1,2,3)$-bundle over $\mbP^1$ as a hypersurface.
If $X/\mbP^1$ satisfies the $K$-condition, then it satisfies the $K^3_1$-condition.
\end{Thm}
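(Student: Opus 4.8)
The plan is to reduce everything to explicit inequalities in the three integers $\lambda,\mu,\nu$ defining $P=P(\lambda,\mu,\nu)$ and then to settle the implication by a finite case analysis. By Remark \ref{rem:toricbundle} I may assume $\lambda\ge 0$, and I write $X\sim 6H+dF$. The first step is to pin down $d$. Since $\mcO_{F_t}(-K_X|_{F_t})\cong\mcO_{F_t}(1)$, every fiber is a degree $6$ hypersurface in $\mbP(1,1,2,3)$, and because $X$ has only $\tfrac12(1,1,1)$ points no fiber contains a $\tfrac13$-line; equivalently the coefficient of $w^2$ in the defining equation, a section of $\mcO_{\mbP^1}(d-2\nu)$, is nowhere zero, which forces $d=2\nu$ and $X\cap C_w=\emptyset$. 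The same bookkeeping on the coefficient of $z^3$ shows the $\tfrac12(1,1,1)$ points are cut out on $C_z$ by a section of $\mcO_{\mbP^1}(2\nu-3\mu)$, so $2\nu\ge 3\mu$. Adjunction with $K_P=-7H-(2+\lambda+\mu+\nu)F$ then gives $-K_X=(H+aF)|_X$ with $a=2+\lambda+\mu-\nu$, and expanding $(H+aF)^3\cdot(6H+dF)$ via Lemma \ref{lem:intnumP} yields $(-K_X)^3=\tfrac16(36+12\lambda+15\mu-18\nu)=6+2\lambda+\tfrac52\mu-3\nu$.

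Next I would compute $\nef(X/\mbP^1)$ through Remark \ref{rem:nefthreshold}, i.e.\ by locating the non-fiber extremal ray of $\bNE(X)$. The natural candidate curves are the section $\Sigma=X\cap\{x=y=0\}$, which by Lemma \ref{lem:wsurf} meets each fiber in a single reduced point and is therefore an irreducible section, together with the torus-invariant sections $C_x,C_y$ of $P$. A computation with Lemma \ref{lem:intnumP} gives $(F\cdot\Sigma)=1$ and $(H\cdot\Sigma)=-\mu/2$, and slopes $\sigma(C_x):=(H\cdot C_x)/(F\cdot C_x)=0$, $\sigma(C_y)=-\lambda$, while $C_x\subset X\iff\nu<0$ and $C_y\subset X\iff\nu<3\lambda$ (otherwise each meets $X$ in finitely many points). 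Since $2\nu\ge 3\mu$ forces $-\mu/2\ge-\nu/3$, the emptiness $X\cap C_w=\emptyset$ rules out any curve of slope near $-\nu/3$, so the non-fiber extremal ray is generated by the curve of minimal slope among those actually lying on $X$. Setting $s_X=-\min\{\sigma(C):C\subset X\text{ horizontal}\}=\max\{\mu/2,\ \lambda\ (\text{if }\nu<3\lambda),\ 0\ (\text{if }\nu<0)\}$, one obtains $\nef(X/\mbP^1)=s_X-a$, whence $(-K_X)^3+\nef(X/\mbP^1)=4+\lambda+\tfrac32\mu-2\nu+s_X$.

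The $K$-condition is then read off from $\Mov(X)$. In the basis $(H,F)$ of $N^1(X)_{\mbQ}$ the movable cone is $\{c_1H+c_2F:c_1\ge 0,\ c_2\ge b\,c_1\}$, where, from the Cox-ring description of $X$ inherited from $P$, the lower slope $b$ equals $\min\{0,\lambda,\mu/2,\nu/3\}$ when this minimum is attained by at least two of the torus-invariant divisor classes and equals the second-smallest of these slopes otherwise. As $-K_X=H+aF$, the statement $-K_X\notin\Int\Mov(X)$ becomes the single inequality $a\le b$, that is $\nu\ge 2+\lambda+\mu-b$.

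Finally I would combine the two. A short manipulation shows that $a\le b$ yields $(-K_X)^3+\nef(X/\mbP^1)\le 1$ as soon as $2b+s_X\le 1+\lambda+\tfrac12\mu$, and the remaining task is to verify this together with the structural constraints $\lambda\ge0$ and $2\nu\ge3\mu$ in each of the finitely many cases fixed by the signs of $\mu,\nu$, by which of $C_x,C_y$ lie on $X$ (which determine $s_X$), and by the multiplicity pattern of the minimal divisor slope (which determines $b$); in the cases where the linear check is not immediate one invokes the defining inequality of the case (for instance $\nu\ge3\lambda$ when $C_y\not\subset X$). I expect the genuine obstacle to be exactly the second and third steps—rigorously determining the nef and movable cones of the \emph{singular} threefold $X$, i.e.\ proving that no curve on $X$ has smaller slope than the exhibited ones and that $\Mov(X)$ is truly inherited from $P$—rather than the final arithmetic, which is routine casework.
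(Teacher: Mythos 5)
Your overall strategy is the same as the paper's: reduce everything to arithmetic in $(\lambda,\mu,\nu)$ by computing $(-K_X)^3$, the nef threshold and the position of $-K_X$ relative to $\Mov(X)$, then do finite casework. Your first step ($X\in|6H+2\nu F|$, $2\nu\ge 3\mu$, adjunction, $(-K_X)^3=6+2\lambda+\tfrac52\mu-3\nu$) agrees with Lemmas \ref{lem:basiccond} and \ref{lem:Kcube}. However, your second step contains a genuine error, not merely a gap. You locate the non-fiber extremal ray of $\bNE(X)$ using only the candidates $\Sigma=X\cap\{x=y=0\}$, $C_x$, $C_y$, and conclude $s_X=\max\{\mu/2,\ \lambda\ (\text{if }\nu<3\lambda),\ 0\ (\text{if }\nu<0)\}$. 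This is false in the regime $\wtr(z)<\wtr(y)\le\wtr(w)$, i.e.\ $\mu/2<\lambda\le\nu/3$: there the effective $1$-cycle $D_x\cdot D_z\cdot X$ (living on the surface $x=z=0$, which your candidate list misses) has $(H\cdot\,\cdot\,)=-2\lambda$ and $(F\cdot\,\cdot\,)=2$, so $X$ carries a horizontal curve of slope $-\lambda<-\mu/2$ and the correct boundary of $\Nef(X)$ is the ray of $H+\lambda F$, not $H+\tfrac{\mu}{2}F$. Concretely, for $(\lambda,\mu,\nu)=(1,1,3)$ the restriction of $X$ to $\{x=z=0\}$ splits into two sections of slope $-1$, so $\nef(X/\mbP^1)=\nu-\mu-2$ (the paper's Lemma \ref{lem:nef(a-i)}) rather than your $\nu-\lambda-\tfrac{\mu}{2}-2$; your formula understates $(-K_X)^3+\nef(X/\mbP^1)$ by $\lambda-\mu/2$ throughout this case, e.g.\ giving $1$ instead of the correct $3/2$ for $(1,1,3)$. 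Any conclusion drawn from the resulting inequality $2b+s_X\le 1+\lambda+\tfrac12\mu$ is therefore unsound.

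The third step is also not established. You assert that $\Mov(X)$ is ``inherited'' from the Cox ring of $P$ via a second-smallest-slope rule, but $X$ is a (singular) hypersurface, not a toric variety, and for the direction of the theorem you actually need a \emph{lower} bound on $\Mov(X)$: to deduce ``not $K$-condition'' from $\delta_X>1$ you must exhibit a movable divisor class on $X$ itself of slope at most $b$. The paper sidesteps the computation of the full movable cone entirely: it first classifies all triplets with $\delta_X>0$ (a list of $13$), and only for the handful with $\delta_X>1$ verifies by hand that $-K_X\in\Int\Mov(X)$, either because $-K_X$ is ample or because $D_z|_X$ is movable (checked via the base locus of $|3D_z|$ on $X$, using the presence of $w^2$ in the equation). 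If you repair the nef computation by adding the torus-invariant surfaces $D_x\cap D_z$, $D_x\cap D_y$ etc.\ to your list of sources of extremal curves — which is exactly the paper's case division by the ordering of $\wtr(x),\wtr(y),\wtr(z),\wtr(w)$ — and replace the global $\Mov(X)$ formula by case-by-case verification of movability on the finitely many exceptional triplets, your argument becomes the paper's proof.
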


In the following let $\pi \colon X \to \mbP^1$ be a del Pezzo fibration of degree $1$ embedded in $P = P (\lambda,\mu,\nu)$ as a hypersurface.
We assume that $X$ is singular but has only singular points of type $\frac{1}{2} (1,1,1)$.
Note that we do not assume that $X/\mbP^1$ satisfies the $K$-condition.
As explained in Remark \ref{rem:toricbundle}, we may and do assume $\lambda \ge 0$ without loss of generality.

We fix notations which will be used in the rest of this section.
\begin{itemize}
\item $f = f (u,v,x,y,z,w) \in \mbC [u,v,x,y,z,w]$ is the defining polynomial of $X$ in $P$.
\item $F_X = F|_X$ and $H_X = H|_X$.
\item $N^1 (X) = (\operatorname{Div} (X)/\equiv) \otimes \mbR$ and $N^1 (P) = (\operatorname{Div} (P)/\equiv) \otimes \mbR$, where $\equiv$ denotes numerical equivalence on divisors.
\item $Q$ is the ray in $N^1 (X)$ spanned by $F_X$ and, for $t \in \{x,y,z,w\}$, $R_t$ is the ray in $N^1 (X)$ spanned by $D_t|_X$.
\item $\delta_X := (-K_X)^3 + \nef (X/\mbP^1)$.
\end{itemize}

\begin{Lem} \label{lem:basiccond}
$X$ is a member of $|6 H + 2 \nu F|$ and we have $\nu \ge 0$ and $3 \mu < 2 \nu$.
\end{Lem}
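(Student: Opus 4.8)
The plan is to read off the three assertions directly from the defining polynomial $f$ by restricting it to the torus-invariant sections of $\Pi$ and exploiting the singularity hypotheses. Since a general fibre of $\pi$ is a sextic in $\mbP(1,1,2,3)$ and $H$ restricts to $\mcO(1)$ on that fibre, the coefficient of $H$ in the class of $X$ is $6$; hence $X \in |6H + cF|$ for some $c \in \mbZ$, and everything reduces to pinning down $c$ together with the inequalities on $\nu$ and $\mu$. Throughout I would use the facts recorded before the lemma that $\Sing(P) = C_z \cup C_w$, with $C_z$ (resp.\ $C_w$) the $\frac{1}{2}(1,1,1)$- (resp.\ $\frac{1}{3}(1,1,2)$-) locus, and that $C_z = \{x=y=w=0\}$, $C_w = \{x=y=z=0\}$.

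First I would determine $c$. Because $X$ has only $\frac{1}{2}(1,1,1)$ points, any point of $X$ on the $\frac{1}{3}(1,1,2)$-curve $C_w$ would be a singular point of $X$ not of type $\frac{1}{2}(1,1,1)$, so $X \cap C_w = \emptyset$. Restricting $f$ to $C_w$ (coordinates $u,v,w$), the only monomials of $H$-degree $6$ are $w^2$ times a form in $u,v$, so $f|_{C_w} = w^2 \beta(u,v)$ with $\deg \beta = c - 2\nu$. If $c < 2\nu$ then $\beta \equiv 0$ and $X \supset C_w$, contradicting the hypothesis on $\Sing(X)$; thus $\beta$ is a nonzero form, and $X \cap C_w = \emptyset$ forces $\beta$ to be nowhere vanishing, i.e.\ a nonzero constant. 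Hence $c = 2\nu$, so $X \in |6H + 2\nu F|$ and $w^2$ occurs in $f$ with nonzero constant coefficient. The inequality $3\mu < 2\nu$ comes from the analogous restriction to $C_z$: here $f|_{C_z} = z^3 \gamma(u,v)$ with $\deg \gamma = 2\nu - 3\mu$, and since the $\frac{1}{2}(1,1,1)$ points of $X$ are precisely the points of $X \cap C_z$, the hypothesis that $X$ is singular forces $X \cap C_z \ne \emptyset$ while terminality forces $X \not\supset C_z$. Thus $\gamma$ is a nonzero form with at least one zero, giving $\deg \gamma = 2\nu - 3\mu \ge 1$, i.e.\ $3\mu < 2\nu$.

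The main obstacle is $\nu \ge 0$, which I would extract from smoothness of a general fibre rather than from any single coordinate section. Suppose $\nu < 0$. Using $c = 2\nu$ and $\lambda \ge 0$, I would check that every monomial $x^{c'} y^{d} z^{e} w^{g}$ of bidegree $(2\nu,6)$ other than $w^2$ has $e \ge 1$: in the cases $e = 0$ with $g \in \{0,1,2\}$ the required $(u,v)$-exponent equals $2\nu - \lambda d$, $\nu - \lambda d$, $0$ respectively, and the first two are negative when $\nu < 0$. Consequently $f = \alpha w^2 + z\,g$ with $\alpha$ a nonzero constant, so every fibre equation has the shape $f_o = \alpha w^2 + z\,g_o$. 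Then $\{z = w = 0\} \cong \mbP^1$ lies on $F_o$, and a one-line differential computation shows that $F_o$ is singular along this curve, namely at the zeros of the coefficient of $z$, or everywhere on it if that coefficient vanishes identically. This contradicts the fact that a general fibre of a del Pezzo fibration is a nonsingular del Pezzo surface, and therefore $\nu \ge 0$.

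I expect the bookkeeping in the last step — verifying that the singularity hypothesis pins the $\frac{1}{2}(1,1,1)$ points to $C_z$, and that no monomial outside $w^2$ escapes divisibility by $z$ once $\nu < 0$ — to be the only genuinely delicate part; the first two assertions are essentially immediate after restricting $f$ to $C_w$ and $C_z$.
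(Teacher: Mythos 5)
Your proof is correct and follows essentially the same route as the paper: the class of $X$ and the inequality $3\mu < 2\nu$ are read off from the behaviour of $X$ along the singular curves $C_w$ and $C_z$ of $P$, and $\nu \ge 0$ comes from the same observation that $\nu < 0$ (together with $\lambda \ge 0$) forces every monomial of $f$ other than $w^2$ to be divisible by $z$. The only immaterial difference is in that last step, where the paper concludes that $X$ is singular along the curve $(f_2 = 0) \cap (z = w = 0)$, contradicting isolatedness of $\Sing(X)$, whereas you run the same computation fibrewise and contradict smoothness of the general fibre.
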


\begin{proof}
We see that $w^2$ appears in $f$ with nonzero constant coefficient since $X$ has only $\frac{1}{2} (1,1,1)$ singular points.
Hence $X \in |6 H + 2 \nu F|$ and, after replacing the coordinate $w$, we may write
\[
f = f_1 (u,v,x,y) + f_2 (u,v,x,y) z + f_3 (u,v,x,y) z^2 + a (u,v) z^3 + w^2
\]
for some $f_1, f_2, f_3 \in \mbC [u,v,x,y]$ and $a \in \mbC [u,v]$.

Suppose that $\nu < 0$.
Then, since $\lambda \ge 0$, $f$ consists of the terms divisible by $z$ or $w$, that is, $f_1 = 0$ as a polynomial.
This implies that $X$ contains the surface $S = D_z \cdot D_w$ which is a $\mbP^1$-bundle over $\mbP^1$ (i.e.\ a Hirzebruch surface).
We see that $\Gamma := (f_2 = 0) \cap S \subset X$ is a curve. 
Since $\Gamma$ is contained in the nonsingular locus of $P$ and $\mult_{\msp} X \ge 2$ for any $\msp \in \Gamma$, $X$ is singular along  $\Gamma$.
This is a contradiction and thus $\nu \ge 0$.

If $a (u,v) = 0$ as a polynomial, then $X$ contains the curve $C_z = D_x \cdot D_y \cdot D_w$.
This implies that $X$ is singular along $C_z$ and this is impossible.
Thus $a (u,v) \ne 0$ and this implies $3 \mu + \deg a = 2 \nu$.
Suppose that $a (u,v)$ is a non-zero constant.
Then $C_z \cap X = \emptyset$ and this implies that $X$ does not have a singular point of type $\frac{1}{2} (1,1,1)$.
This is impossible.
This shows that $\deg a > 0$ and thus $3 \mu < 2 \nu$.
\end{proof}

By adjunction, we have
\[
-K_X = -(K_P + X)|_X = H_X + (-\nu + \lambda + \mu + 2) F_X.
\] 

\begin{Lem} \label{lem:Kcube}
We have
\[
(-K_X)^3 = 2 \lambda + \frac{5}{2} \mu - 3 \nu + 6.
\]
\end{Lem}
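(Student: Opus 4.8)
The plan is to reduce the self-intersection $(-K_X)^3$ on the threefold $X$ to the intersection numbers on the ambient fourfold $P$, which are already recorded in Lemma \ref{lem:intnumP}. First I would invoke the adjunction formula displayed just before the statement to write
\[
-K_X = H_X + c\,F_X, \qquad c := -\nu + \lambda + \mu + 2.
\]
Since $X \in |6H + 2\nu F|$ by Lemma \ref{lem:basiccond}, the class $X$ represents $6H + 2\nu F$, so for any three divisor classes $D_1, D_2, D_3$ on $P$ one has $(D_1|_X \cdot D_2|_X \cdot D_3|_X)_X = (D_1 \cdot D_2 \cdot D_3 \cdot (6H + 2\nu F))_P$. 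Taking $D_1 = D_2 = D_3 = H + cF$ gives
\[
(-K_X)^3 = \bigl((H + cF)^3 \cdot (6H + 2\nu F)\bigr)_P.
\]

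The second step is purely a bookkeeping expansion. Writing $(H + cF)^3 = H^3 + 3c\,H^2 F + 3c^2\,H F^2 + c^3\,F^3$ and multiplying by $6H + 2\nu F$, every monomial that contains $F$ to a power at least $2$ is killed by the vanishing $(H^2 \cdot F^2) = (H \cdot F^3) = (F^4) = 0$ from Lemma \ref{lem:intnumP}. Only two terms survive, namely $6\,(H^4)$ and $(2\nu + 18c)\,(H^3 \cdot F)$. Substituting the values $(H^4) = -\tfrac{6\lambda + 3\mu + 2\nu}{36}$ and $(H^3 \cdot F) = \tfrac{1}{6}$ then yields
\[
(-K_X)^3 = -\frac{6\lambda + 3\mu + 2\nu}{6} + \frac{2\nu + 18c}{6}.
\]

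Finally I would simplify: the $-\tfrac{2\nu}{6}$ coming from the first fraction cancels the $+\tfrac{2\nu}{6}$ from the second, leaving $-\lambda - \tfrac{\mu}{2} + 3c$, and inserting $c = -\nu + \lambda + \mu + 2$ produces $2\lambda + \tfrac{5}{2}\mu - 3\nu + 6$, exactly the claimed value. I do not expect any genuine obstacle here beyond careful arithmetic; the only structurally important point is the vanishing of the mixed intersection numbers, which is precisely what collapses the expansion to two terms, so the sole thing requiring attention is getting the two surviving coefficients right.
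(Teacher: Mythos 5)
Your computation is correct and follows exactly the same route as the paper, which simply states that the lemma follows from expanding $\bigl((H + (-\nu+\lambda+\mu+2)F)^3 \cdot (6H + 2\nu F)\bigr)_P$ using the intersection numbers of Lemma \ref{lem:intnumP}. Your expansion, the identification of the two surviving terms, and the final arithmetic all check out.
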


\begin{proof}
This follows from a straightforward computation 
\[
(-K_X)^3 = (H + (- \nu + \lambda + \mu + 2)F)^3 \cdot (6 H + 2 \nu F)
\]
using the result of Lemma \ref{lem:intnumP}.
\end{proof}

We will classify the triplets $(\lambda,\mu,\nu)$ such that $X/\mbP^1$ does not satisfy the $K^2$-condition, or equivalently, such that $\delta_X > 0$ (cf.\ Remark \ref{rem:Kcondsequiv}).
In order to do so, we need to understand the nef cone $\Nef (X)$ which depends on the positions of the rays $R_x, R_y, R_z, R_w$ inside $N^1 (X)$.
We define {\it weight ratios} of the coordinates $x, y, z, w$ as follows:
\[
\wtr (x) := 0, \ 
\wtr (y) := \lambda, \ 
\wtr (z) := \mu/2, \ 
\wtr (w) := \nu/3.
\]
Note that $\wtr (x) \le \wtr (y)$, $\wtr (x) \le \wtr (w)$ and $\wtr (z) < \wtr (w)$ by Lemma \ref{lem:basiccond}.
The classification will be done by the following case division:
\begin{enumerate}
\item[(a)] $\wtr (y) \le \wtr (w)$. This case is further divided into the following: 
\begin{enumerate}
\item[(a-i)] $\max \{\wtr (x), \wtr (z)\} \le \wtr (y) \le \wtr (w)$.
\item[(a-ii)] $\wtr (x) \le \wtr (y) < \wtr (z) < \wtr (w)$.
\end{enumerate}
\item[(b)] $\wtr (w) < \wtr (y)$.
\end{enumerate}

\subsubsection{\emph{Case (a-i):} $\max \{\wtr (x), \wtr (z)\} \le \wtr (y) \le \wtr (w)$}

\begin{Lem} \label{lem:nef(a-i)}
\begin{enumerate}
\item $\Nef (X) = Q + R_y$ and $\nef (X/\mbP^1) = - \mu + \nu - 2$.
\item $\delta_X = 2 \lambda + \frac{3}{2} \mu - 2 \nu + 4$.
\end{enumerate}
\end{Lem}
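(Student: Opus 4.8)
The plan is to pin down the two boundary rays of the two-dimensional cone $\Nef (X) \subseteq N^1 (X) = \mbR \langle H_X, F_X \rangle$. One of them is $Q = \mbR_{\ge 0} F_X$, because $F_X$ is the pullback of an ample class on $\mbP^1$ and hence lies on the boundary of the nef cone. Thus the entire content of (i) is to show that the second boundary ray is $R_y$. Granting this, the nef threshold is read off by writing $-K_X + r F_X = H_X + (-\nu + \lambda + \mu + 2 + r) F_X$ and choosing $r$ so that this class lands on $R_y = \mbR_{\ge 0} (H_X + \lambda F_X)$, which forces $r = \nu - \mu - 2$; hence $\nef (X/\mbP^1) = -\mu + \nu - 2$. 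Part (ii) is then immediate: combining this with $(-K_X)^3 = 2\lambda + \tfrac{5}{2}\mu - 3\nu + 6$ from Lemma \ref{lem:Kcube} gives $\delta_X = (-K_X)^3 + \nef (X/\mbP^1) = 2\lambda + \tfrac{3}{2}\mu - 2\nu + 4$.

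To prove that $R_y$ is a boundary ray I would establish two complementary inclusions, of which the easy one is $\Nef (X) \subseteq Q + R_y$. Here I would test a class $a H_X + b F_X$ against two effective curves. Intersecting with a curve contained in a $\pi$-fibre forces $a \ge 0$, since $H_X$ restricts to the ample class $\mcO (1)$ on the fibre (a hypersurface in $\mbP (1,1,2,3)$) while $F_X$ restricts trivially. Intersecting with the effective $1$-cycle $Z := X \cdot D_x \cdot D_z$ forces $b \ge \lambda a$: using Lemma \ref{lem:intnumP} one computes $(H_X \cdot Z) = -2\lambda$ and $(F_X \cdot Z) = 2$, so that $(a H_X + b F_X) \cdot Z = 2 (b - \lambda a)$. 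These two inequalities cut out exactly $Q + R_y$.

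The step I expect to be the main obstacle is the reverse inclusion, which by convexity reduces to showing that $D_y|_X = H_X + \lambda F_X$ is nef. The global sections of $\mcO_P (H + \lambda F)$ are the monomials of bidegree $(\lambda, 1)$, namely $y$ and $u^i v^{\lambda - i} x$ for $0 \le i \le \lambda$, whose common zero locus is the surface $S = D_x \cap D_y = (x = y = 0)$; hence $\Bs |D_y|_X|$ is contained in $C := S \cap X$. From the normal form $f = f_1 + f_2 z + f_3 z^2 + a (u,v) z^3 + w^2$ of Lemma \ref{lem:basiccond} one sees that $f|_S = a (u,v) z^3 + w^2$, since each $f_i \in \mbC [u,v,x,y]$ has positive degree in $x,y$ and so vanishes on $S$. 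As $\mcO_{\mbP (2,3)} (6) \cong \mcO_{\mbP^1} (1)$, the curve $C$ meets each fibre of $S \to \mbP^1$ in a single reduced point, so $C$ is an irreducible section of $\pi$ and is the only irreducible curve possibly contained in $\Bs |D_y|_X|$. It then remains to check $D_y|_X$ against every irreducible curve $\ell$: if $\ell \ne C$ a general member of $|D_y|_X|$ avoids $\ell$, giving $(D_y|_X \cdot \ell) \ge 0$, while for $\ell = C$ a direct computation with Lemma \ref{lem:intnumP} gives $(D_y|_X \cdot C) = (X \cdot D_x \cdot D_y^2) = \lambda - \tfrac{\mu}{2}$, which is nonnegative precisely because $\wtr (z) = \tfrac{\mu}{2} \le \lambda = \wtr (y)$ is the defining hypothesis of Case (a-i). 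This proves $D_y|_X$ nef, completes (i), and hence yields (ii) as explained above.
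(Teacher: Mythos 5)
Your proposal is correct, and its overall architecture matches the paper's: both identify $R_y$ as the second boundary ray of $\Nef(X)$ and both use the torus-invariant $1$-cycle $D_x\cdot D_z\cdot X$ (your $Z$, the paper's $\Gamma$) to pin down that boundary. The difference lies in how nefness of $D_y|_X$ is established, which is the only nontrivial step. The paper observes that $H^0(P,\mcO_P(2D_y))$ contains the monomials $x^2u^{2\lambda}$, $x^2v^{2\lambda}$, $y^2$, $zu^{2\lambda-\mu}$, $zv^{2\lambda-\mu}$ (here the case hypothesis $\mu\le 2\lambda$ enters, making $zu^{2\lambda-\mu}$ a genuine monomial), whose common zero locus is $C_w$, disjoint from $X$; hence $2D_y|_X$ is base point free and nefness is immediate, with no curve-by-curve check. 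You instead bound $\Bs|D_y|_X|$ by the section curve $C=S\cap X$ with $S=(x=y=0)$, verify that $C$ is irreducible via $f|_S=a(u,v)z^3+w^2$, and compute $(D_y|_X\cdot C)=(X\cdot D_x\cdot D_y^2)=\lambda-\tfrac{\mu}{2}\ge 0$; your computations check out against Lemma \ref{lem:intnumP}. Your route is longer but has the merit of making the role of the case hypothesis $\wtr(z)\le\wtr(y)$ completely transparent (it is exactly the nonnegativity of $(D_y|_X\cdot C)$), and of writing out the supporting inequalities $a\ge 0$, $b\ge\lambda a$ that the paper leaves implicit. Two cosmetic points: a general member of $|D_y|_X|$ need not \emph{avoid} a curve $\ell\ne C$, only fail to contain it, which is all you need for $(D_y|_X\cdot\ell)\ge 0$; and it is worth saying explicitly that $X\cdot D_x\cdot D_y=C$ with multiplicity one (generic transversality along $C$, away from $\Sing(P)$) so that the projection-formula computation of $(D_y|_X\cdot C)$ on $P$ is legitimate.
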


\begin{proof}
We prove (i).
We have $2 D_y \sim 2 H + 2 \lambda F$ and $H^0 (P, \mcO_P (2 D_y))$ contains the sections
\[
x^2 u^{2 \lambda}, \ x^2 v^{2 \lambda}, \ y^2, \ z u^{2 \lambda - \mu}, \ z v^{2 \lambda - \mu}
\]
whose common zero locus is the curve $C_w \subset P$.
Since $X \cap C_w = \emptyset$ because of the presence of the term $w^2$ in the equation of $X$, we see that $2 D_y|_X$ is base point free.
This shows that $D_y|_X$ is nef.
We consider $\Gamma := D_x \cdot D_z \cdot X$, which is an effective $1$-cycle on $X$.
Since $X \sim 2 D_w$, we have
\[
(D_y|_X \cdot \Gamma) = 2 (D_x \cdot D_y \cdot D_z \cdot D_w) = 0.
\]
This shows that $D_y$ is not ample and thus lies in the boundary of $\Nef (X)$.
Now the computations of $\nef (X/\mbP^1)$ and $\delta_X$ are straightforward and we leave them to readers.
\end{proof}

\begin{Prop} \label{prop:case(a-i)}
Suppose that $X/\mbP^1$ does not satisfy the $K^2$-condition.
Then
\[
(\lambda,\mu,\nu) \in \{ (0,-2,0), (0,-1,0), (0,-1,1), (0,0,1), (1,1,3), (1,2,4), (2,3,6) \}.
\]
\end{Prop}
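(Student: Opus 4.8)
The plan is to reduce the statement to a finite, elementary search over integer triples $(\lambda,\mu,\nu)$. First I would collect all the linear constraints that $(\lambda,\mu,\nu)$ must satisfy. The standing hypotheses give $\lambda \ge 0$, and Lemma \ref{lem:basiccond} gives $\nu \ge 0$ together with $3\mu < 2\nu$. The Case (a-i) hypothesis $\max\{\wtr (x),\wtr (z)\} \le \wtr (y) \le \wtr (w)$ unwinds, via the definitions of the weight ratios, to $\mu \le 2\lambda$ and $3\lambda \le \nu$. Finally, since the $K^2$-condition is equivalent to $\delta_X \le 0$ (Remark \ref{rem:Kcondsequiv}), the failure of the $K^2$-condition is exactly $\delta_X > 0$; by Lemma \ref{lem:nef(a-i)}.(ii) this reads
\[
2\lambda + \tfrac{3}{2}\mu - 2\nu + 4 > 0,
\]
or equivalently $4\nu < 4\lambda + 3\mu + 8$.

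Second, I would use these inequalities to bound $\lambda$. Substituting the lower bound $\nu \ge 3\lambda$ into the $\delta_X > 0$ inequality gives $2\lambda + \tfrac{3}{2}\mu - 6\lambda + 4 > 0$, i.e.\ $\tfrac{3}{2}\mu > 4\lambda - 4$; combining this with $\mu \le 2\lambda$ yields $3\lambda > 4\lambda - 4$, hence $\lambda < 4$. Thus $\lambda \in \{0,1,2,3\}$.

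Third, for each such $\lambda$ the remaining variables are confined to a finite range. The inequality $\tfrac{8(\lambda-1)}{3} < \mu \le 2\lambda$ leaves only finitely many integers $\mu$ for each $\lambda$; then for each admissible pair $(\lambda,\mu)$ the two lower bounds $\nu \ge 3\lambda$ and $2\nu > 3\mu$ together with the upper bound $4\nu < 4\lambda + 3\mu + 8$ pin $\nu$ down to finitely many integers. Running through the cases $\lambda = 0,1,2,3$ and discarding the empty ranges produces exactly the seven triples listed, and one checks directly that each of them satisfies all the constraints.

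The computation itself is routine; the only point requiring care is the bookkeeping of strict versus non-strict inequalities — the failure of the $K^2$-condition is the \emph{strict} inequality $\delta_X > 0$, and the constraint $3\mu < 2\nu$ from Lemma \ref{lem:basiccond} is likewise strict — so that the borderline integer values (for instance those with $\delta_X = 0$, which must be excluded, or with $2\nu = 3\mu$, likewise excluded) are handled correctly and no admissible triple is accidentally dropped or spuriously included. There is no conceptual obstacle beyond this careful enumeration.
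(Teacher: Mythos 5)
Your proposal is correct and follows essentially the same route as the paper: assemble the linear constraints from Lemma \ref{lem:basiccond}, the case (a-i) hypotheses and $\delta_X>0$, deduce $\lambda\le 3$, and finish by a finite enumeration (the paper bounds $\lambda$ via $3\mu\le 2\nu-1$ first, you via $\mu\le 2\lambda$ directly, but this is an immaterial difference). The enumeration you sketch does indeed yield exactly the seven listed triples.
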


\begin{proof}
Since the $K^2$-condition is equivalent to the $K^3_0$-condition, it suffices to classify triplets $(\lambda,\mu,\nu)$ such that $\delta_X > 0$.
Since $\delta_X \in \frac{1}{2} \mbZ$, this is equivalent to $2 \delta_X \ge 1$ and hence we assume
\begin{equation} \label{eq:delta(a-i)}
4 \lambda + 3 \mu - 4 \nu + 8 \ge 1.
\end{equation}
By Lemma \ref{lem:basiccond} and the case assumptions, we have
\begin{equation} \label{eq:basecd(a-i)}
\mu \le 2 \lambda, \ 
0 \le 3 \lambda \le \nu, \ 
3 \mu \le 2 \nu - 1.
\end{equation}
By $3 \mu \le 2 \nu -1$ and \eqref{eq:delta(a-i)}, we have 
\begin{equation} \label{eq:(a-i)sub}
4 \lambda - 2 \nu + 6 \ge 0.
\end{equation}
Combining this with $3 \lambda \le \nu$, we have $\lambda \le 3$, that is,  $\lambda \in \{0,1,2,3\}$.

\begin{itemize}
\item If $\lambda = 0$, then we have $\nu \in \{0,1,2,3\}$ by $3 \mu \le 2 \nu -1$ and \eqref{eq:(a-i)sub}.
It is then easy to see that $(\lambda,\mu,\nu) \in \{ (0,-2,0), (0,-1,0), (0,-1,1) \}$.
\item If $\lambda = 1$, then we have $\nu \in \{3,4,5\}$ by $3 \lambda \le \nu$ and \eqref{eq:(a-i)sub}.
It is then easy to see that $(\lambda,\mu,\nu) \in \{ (1,1,3), (1,2,4) \}$.
\item If $\lambda = 2$, then $\nu \in \{6,7\}$ by $3 \lambda \le \nu$ and \eqref{eq:(a-i)sub}.
It is then easy to see that $(\lambda,\mu,\nu) = (2,3,6)$.
\item If $\lambda = 3$, then we have $\nu = 9$ by $3 \lambda \le \nu$ and \eqref{eq:(a-i)sub}.
But then there is no integer $\mu$ satisfying both \eqref{eq:delta(a-i)} and $3 \mu \le 2 \nu - 1$.
\end{itemize}
This completes the proof.
\end{proof}

\subsubsection{\emph{Case (a-ii):} $\wtr (x) \le \wtr (y) < \wtr (z) < \wtr (w)$}

\begin{Lem} \label{lem:nef(a-ii)}
\begin{enumerate}
\item $\Nef (X) = Q + R_z$ and $\nef (X/\mbP^1) = - \lambda - \frac{1}{2} \mu + \nu - 2$.
\item $\delta_X = \lambda + 2 \mu - 2 \nu + 4$.
\end{enumerate}
\end{Lem}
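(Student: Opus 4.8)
The plan is to pin down the nef cone $\Nef(X)$ explicitly and then read off the nef threshold and $\delta_X$ by arithmetic, following the template of Lemma~\ref{lem:nef(a-i)}. Since $N^1(X)$ is two-dimensional, $\Nef(X)$ is a proper $2$-dimensional cone and is determined by its two extremal rays. One of them is always $Q = \mbR_{\ge 0} F_X$, because $F_X = \pi^*\mcO_{\mbP^1}(1)$ is nef but not ample (it is trivial on curves contracted by $\pi$). I claim the other extremal ray is $R_z$; the point is that under the case hypothesis $\wtr(y) < \wtr(z) < \wtr(w)$ the coordinate $z$ has the second-largest weight ratio, so it plays the role that $y$ played in Case~(a-i).

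First I would check that $D_z|_X$ is nef by exhibiting a base-point-free system. The divisor $D_z \sim 2H + \mu F$ has bidegree $(\mu,2)$, and the hypothesis $\mu > 2\lambda \ge 0$ ensures that $H^0(P,\mcO_P(D_z))$ contains the sections
\[
z, \quad x^2 u^a v^b \ (a+b=\mu), \quad y^2 u^a v^b \ (a+b=\mu-2\lambda),
\]
whose common zero locus is exactly $(x=y=z=0) = C_w$. Because $w^2$ appears in the equation of $X$, we have $X \cap C_w = \emptyset$, so the restricted system is base-point-free and $D_z|_X$ is nef.

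Next I would show $D_z|_X$ is extremal by producing a curve meeting it trivially. Using $X \sim 2D_w$, the effective $1$-cycle $\Gamma := D_x \cdot D_y \cdot X$ satisfies
\[
(D_z|_X \cdot \Gamma) = 2(D_x \cdot D_y \cdot D_z \cdot D_w) = 0,
\]
while $(F_X \cdot \Gamma) = 1$ shows $\Gamma$ is a nonzero (horizontal) curve class. Hence $D_z|_X$ is nef but not ample, so $Q$ and $R_z$ are the two boundary rays and $\Nef(X) = Q + R_z$. Writing $-K_X = H_X + (-\nu+\lambda+\mu+2)F_X$, the class $-K_X + rF_X$ lies in $Q + R_z = \{\,2bH_X + (a+b\mu)F_X : a,b \ge 0\,\}$ exactly when $-\nu+\lambda+\mu+2+r \ge \frac{1}{2}\mu$, which gives $\nef(X/\mbP^1) = -\lambda - \frac{1}{2}\mu + \nu - 2$ (equivalently, apply Remark~\ref{rem:nefthreshold} to the ray $[\Gamma]$). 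Part~(ii) then follows by inserting this together with $(-K_X)^3 = 2\lambda + \frac{5}{2}\mu - 3\nu + 6$ from Lemma~\ref{lem:Kcube} into $\delta_X = (-K_X)^3 + \nef(X/\mbP^1)$.

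The intersection numbers (via Lemma~\ref{lem:intnumP}) and the final substitution are routine; the one delicate step is the nef verification, where I must be certain the base locus of $|D_z|$ is exactly $C_w$ and nothing larger. This is precisely where the inequality $\wtr(z) \ge \wtr(y)$ of Case~(a-ii) is used: it is what makes the $y^2$-sections available and thereby cuts the base locus down to the curve $C_w$ disjoint from $X$. If instead $\wtr(z) < \wtr(y)$, those sections vanish and $D_z$ ceases to be the correct boundary divisor—this is the reason for the split into Cases~(a-i) and~(a-ii).
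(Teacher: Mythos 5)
Your proposal is correct and follows essentially the same route as the paper: the paper's (very terse) proof likewise establishes that $D_z|_X = 2H_X + \mu F_X$ is base point free because the sections of $|D_z|$ cut out only $C_w$, which is disjoint from $X$, and then uses the effective $1$-cycle $\Gamma = D_x \cdot D_y \cdot X$ with $(D_z|_X \cdot \Gamma) = 2(D_x\cdot D_y\cdot D_z\cdot D_w) = 0$ to conclude $\Nef(X) = Q + R_z$, after which the threshold and $\delta_X$ computations are the same arithmetic you perform. Your added remark on why the case hypothesis $\wtr(y) < \wtr(z)$ is needed for the $y^2$-sections to exist is a accurate gloss on the case division, not a deviation.
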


\begin{proof}
Proof can be done in the same as that of Lemma \ref{lem:nef(a-i)}.
Note that, for (ii), we see that $D_z|_X = 2 H_X + \mu F_X$ is base point free and we have $(D_z|_X \cdot \Gamma) = 0$, where $\Gamma = D_x \cdot D_y \cdot X$ is an effective $1$-cycle on $X$.
\end{proof}

\begin{Prop} \label{prop:case(a-ii)}
Suppose that $X/\mbP^1$ does not satisfy the $K^2$-condition.
Then
\[
(\lambda,\mu,\nu) \in \{ (0,1,2), (1,3,5) \}.
\]
\end{Prop}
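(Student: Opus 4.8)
The plan is to run exactly the same argument as in the proof of Proposition \ref{prop:case(a-i)}, now feeding in the formula for $\delta_X$ supplied by Lemma \ref{lem:nef(a-ii)}.(ii) together with the case (a-ii) hypotheses. Since the $K^2$-condition is equivalent to the $K^3_0$-condition, its failure means $\delta_X > 0$, where by Lemma \ref{lem:nef(a-ii)}.(ii)
\[
\delta_X = \lambda + 2 \mu - 2 \nu + 4.
\]
The first thing I would observe is that, unlike in case (a-i), this expression is an \emph{integer}, so $\delta_X > 0$ is equivalent to $\delta_X \ge 1$, i.e.\ to
\[
\lambda + 2 \mu - 2 \nu + 3 \ge 0.
\]

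Next I would translate the hypotheses into integer inequalities. The case (a-ii) assumption $\wtr (x) \le \wtr (y) < \wtr (z) < \wtr (w)$ unwinds to $0 \le \lambda$, $\lambda < \mu/2$ and $\mu/2 < \nu/3$, and since all quantities are integers these sharpen to
\[
\lambda \ge 0, \qquad \mu \ge 2 \lambda + 1, \qquad 2 \nu \ge 3 \mu + 1.
\]
(The conditions $\nu \ge 0$ and $3 \mu < 2 \nu$ of Lemma \ref{lem:basiccond} are already subsumed here, since they follow from the displayed inequalities.)

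Then I would combine these three constraints with the inequality $\delta_X \ge 1$ exactly as in case (a-i). Chaining $3 \mu + 1 \le 2 \nu$ and $2 \nu \le \lambda + 2 \mu + 3$ yields $\mu \le \lambda + 2$, and feeding this back into $\mu \ge 2 \lambda + 1$ forces $2 \lambda + 1 \le \lambda + 2$, i.e.\ $\lambda \le 1$. A short case split on $\lambda \in \{0,1\}$ then pins down $\mu$ (via $2 \lambda + 1 \le \mu \le \lambda + 2$) and finally $\nu$ (via $3 \mu + 1 \le 2 \nu \le \lambda + 2 \mu + 3$). The only genuinely load-bearing point, and the closest thing to an obstacle, is a parity check: the two-sided bound on $2 \nu$ sometimes leaves only an odd candidate for the even quantity $2 \nu$, which is exactly what rules out $(\lambda,\mu) = (0,2)$ and leaves precisely $(\lambda,\mu,\nu) \in \{ (0,1,2), (1,3,5) \}$. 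Everything else is routine arithmetic of the same flavor as in Proposition \ref{prop:case(a-i)}.
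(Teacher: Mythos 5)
Your proof is correct and takes essentially the same approach as the paper: the paper likewise reduces the failure of the $K^2$-condition to the integer inequality $\delta_X \ge 1$ combined with $0 \le 2\lambda \le \mu - 1$ and $3\mu \le 2\nu - 1$. The only cosmetic difference is that the paper first derives $\mu \le 3$ and splits on $\mu \in \{1,2,3\}$ rather than deriving $\lambda \le 1$ and splitting on $\lambda$; in particular your parity observation ruling out $(\lambda,\mu) = (0,2)$ corresponds exactly to the paper's remark that no integer $\nu$ exists in its $\mu = 2$ case.
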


\begin{proof}
We will classify triplets $(\lambda,\mu,\nu)$ such that $\delta_X > 0$.
In this case, we see $\delta_X \in \mbZ$ by Lemma \ref{lem:nef(a-ii)}.
Hence we assume
\begin{equation} \label{eq:delta(a-ii)}
\lambda + 2 \mu - 2 \nu + 4 \ge 1
\end{equation}
By Lemma \ref{lem:basiccond} and the case assumptions, we have
\begin{equation} \label{eq:basecd(a-ii)}
0  \le 2 \lambda \le \mu -1, \  
3 \mu \le 2 \nu - 1.
\end{equation}
By \eqref{eq:delta(a-ii)} and \eqref{eq:basecd(a-ii)}, we have $\mu \le 3$.
Thus $\mu \in \{1,2,3\}$ since $0 \le \mu - 1$.
\begin{itemize}
\item If $\mu = 1$, then we have $\lambda = 0$ by $0 \le 2 \lambda \le \mu - 1$, and thus we have $\nu = 2$ by $3 \mu \le 2 \nu - 1$ and \eqref{eq:delta(a-ii)}, that is, $(\lambda,\mu,\nu) = (0,1,2)$.
\item If $\mu = 2$, then we have $\lambda = 0$ by $0 \le 2 \lambda \le \mu - 1$.
However there is no integer $\nu$ satisfying both \eqref{eq:delta(a-ii)} and $3 \mu \le 2 \nu - 1$.
\item If $\mu = 3$, then $\mu \ge 5$ by $3 \mu \le 2 \nu - 1$, and we have $\lambda \ge 1$ by $\mu \ge 5$ and  \eqref{eq:delta(a-ii)}.
Thus $\lambda = 1$ by $2 \lambda \le \mu - 1$, and we have $\nu = 5$, that is, $(\lambda,\mu,\nu) = (1,3,5)$.
\end{itemize}
This completes the proof.
\end{proof}

\subsubsection{\emph{Case (b):} $\wtr (w) < \wtr (y)$}

\begin{Lem} \label{lem:restrictb}
Suppose that $\wtr (w) < \wtr (y)$.
Then one of the following holds.
\begin{enumerate}
\item $2 \nu \ge \max \{5 \lambda, 4 \lambda + \mu \}$.
\item $5 \lambda > 2 \nu = 4 \lambda + \mu$.
\item $4 \lambda + \mu > 2 \nu = 5 \lambda$.
\end{enumerate}
\end{Lem}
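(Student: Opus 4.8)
The plan is to exploit the fact that, in Case (b), the standing hypothesis $\wtr (w) < \wtr (y)$ forces the torus-invariant curve $C_y = D_x \cdot D_z \cdot D_w$ to be contained in $X$, and then to extract the three possibilities from the requirement that $X$ be smooth along $C_y$. First I would observe that, since $X \in |6 H + 2 \nu F|$ by Lemma \ref{lem:basiccond}, any monomial of $f$ supported on $C_y$ (i.e.\ involving only $u,v,y$) must be of the form $y^6 u^b v^c$, the only such monomials of $H$-degree $6$, and hence of $F$-degree $6 \lambda + b + c$. As $\wtr (w) < \wtr (y)$ means $\nu < 3 \lambda$, we have $2 \nu < 6 \lambda$, so no such monomial can occur in a polynomial of $F$-degree $2 \nu$. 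Therefore $f$ vanishes identically on $C_y$, that is, $C_y \subset X$. Since $C_y$ is disjoint from $\Sing (P) = C_z \cup C_w$, it lies in the smooth locus of $P$; and because a $\frac{1}{2} (1,1,1)$ point cannot arise as a hypersurface singularity, the only-$\frac{1}{2}(1,1,1)$ hypothesis forces $X$ to be \emph{smooth along} $C_y$.

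Next I would compute the normal-linear part of the defining equation along $C_y$. Working in the affine chart $u = y = 1$, in which $(v,x,z,w)$ are honest local coordinates and $C_y = \{x = z = w = 0\}$, the monomials of $f$ that are linear in exactly one normal coordinate are, by a count of $H$-degrees, $x y^5 d(u,v)$, $z y^4 e(u,v)$ and $w y^3 g(u,v)$, where $d,e,g$ are binary forms in $u,v$ of degrees $2\nu - 5\lambda$, $2\nu - 4\lambda - \mu$ and $\nu - 3\lambda$ respectively (a form being identically zero when the indicated degree is negative). The $w$-term drops out because $\nu - 3\lambda < 0$ in Case (b), so the normal-linear part is $d(u,v)\, x + e(u,v)\, z$. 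Smoothness of $X$ at the point of $C_y$ over $(u_0\!:\!v_0) \in \mbP^1$ is then equivalent to $(d(u_0,v_0), e(u_0,v_0)) \ne (0,0)$, so smoothness along the whole of $C_y$ is equivalent to $d$ and $e$ having no common zero on $\mbP^1$.

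Finally I would carry out the elementary case division according to the signs of $D := 2\nu - 5\lambda$ and $E := 2\nu - 4\lambda - \mu$. If $D \ge 0$ and $E \ge 0$ then $2\nu \ge \max \{5\lambda, 4\lambda + \mu\}$, which is (i). If $D < 0$ then $d \equiv 0$, and the no-common-zero condition forces $e$ to be a nowhere-vanishing binary form on $\mbP^1$, hence a nonzero constant, i.e.\ $E = 0$; this gives $5\lambda > 2\nu = 4\lambda + \mu$, which is (ii). The case $E < 0$ is symmetric and yields $4\lambda + \mu > 2\nu = 5\lambda$, which is (iii). The remaining possibility $D < 0$ and $E < 0$ is impossible, for then both $d$ and $e$ vanish and $X$ would be singular along all of $C_y$, contradicting smoothness. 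As these four sign-cases are exhaustive, one of (i), (ii), (iii) must hold. The only genuinely delicate point is the bookkeeping of the local chart and the verification that the three listed monomials exhaust the normal-linear terms; everything after that is a short arithmetic case analysis.
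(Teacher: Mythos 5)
Your argument is correct and is essentially the paper's own proof: the paper likewise writes $f = y^5 x\, a(u,v) + y^4 z\, b(u,v) + g$ with $g \in (x,z,w)^2$ (your $d, e$ are its $a, b$), uses that the only-$\frac{1}{2}(1,1,1)$ hypothesis forbids $X$ from being singular along or at points of $C_y$, and runs the same sign case analysis on $2\nu - 5\lambda$ and $2\nu - 4\lambda - \mu$. Your write-up is a bit more explicit on two points the paper leaves implicit (why $C_y \subset X$ and why the $w$-linear term $y^3 w$ is absent, both from $\nu < 3\lambda$), but the route is the same.
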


\begin{proof}
We can write
\[
f = y^5 x a (u,v) + y^4 z b (u,v) + g (u,v,x,y,z,w),
\]
where $a (u,v), b (u,v)$ are of degree $2 \nu - 5 \lambda$ and $2 \nu - 4 \lambda - \mu$, respectively and $g$ is contained in the ideal $(x,z,w)^2$.
Here we understand that $a (u,v) = 0$ if $2 \nu - 5 \lambda < 0$, and $b (u,v) = 0$ if $2 \nu - 4 \lambda - \mu < 0$.
We see that $X$ contains the curve $C_y$, which is the common zero locus of $x, z, w$.

If $a (u,v) = b (u,v) = 0$, then the defining equation of $X$ is $g = 0$ and $g$ has multiplicity $2$ at any point of $C_y \subset X$.
This implies that $X$ is singular along the curve $C_y$.
This is a contradiction and we see that either $2 \nu \ge 5 \lambda$ or $2 \nu \ge 4 \lambda + \mu$.

Suppose that $2 \nu < \max \{5 \lambda,4 \lambda + \mu\}$.
Then either $5 \lambda > 2 \nu \ge 4 \lambda + \mu$ or $4 \lambda + \mu > 2 \nu \ge 5 \lambda$.
If we are in the former case, then $a (u,v) = 0$ and we have $f = y^4 z b + g$. 
If further $\deg b = 2 \nu - 4 \lambda - \mu > 0$, then $X$ has a non-quotient singularity along $C_y \cap (b = 0) \ne \emptyset$.
This is a contradiction and we have $2 \nu = 4 \lambda + \mu$ (and $b$ is a non-zero constant).
If we are in the latter case, a similar argument shows that we have $2 \nu = 5 \lambda$.
This shows that we are in one of the cases (i), (ii) and (iii).
\end{proof}

\begin{Lem} \label{lem:nef(b)}
\begin{enumerate}
\item $\Nef (X) = Q + R_y$ and $\nef (X/\mbP^1) = - \mu + \nu - 2$.
\item $\delta_X = 2 \lambda + \frac{3}{2} \mu - 2 \nu + 4$.
\end{enumerate}
\end{Lem}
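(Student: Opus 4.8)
The statement coincides with that of Lemma \ref{lem:nef(a-i)}, so the plan is to reuse the strategy there and merely check that the case hypothesis $\wtr(w) < \wtr(y)$ supplies the same numerical input. The cone $\Nef(X)$ is two-dimensional (the Picard number of $X$ is $2$), and $F_X$ already spans one boundary ray $Q$; hence it suffices to prove that $D_y|_X$ is nef but not ample, so that $R_y$ becomes the second boundary ray and $\Nef(X) = Q + R_y$. Once the nef cone is identified, both $\nef(X/\mbP^1)$ and $\delta_X$ drop out of linear algebra in $N^1(X)$ together with Lemma \ref{lem:Kcube}.

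For nefness I would argue exactly as in Lemma \ref{lem:nef(a-i)}. In case (b) one has $\wtr(z) < \wtr(w) < \wtr(y)$, hence $\mu/2 < \lambda$ and $2\lambda - \mu > 0$; this is the single inequality the argument needs, and here it is provided by $\wtr(w) < \wtr(y)$ rather than by $\wtr(z) \le \wtr(y)$. Consequently $H^0(P, \mcO_P(2D_y))$ contains the sections $x^2 u^{2\lambda}, x^2 v^{2\lambda}, y^2, z u^{2\lambda-\mu}, z v^{2\lambda-\mu}$, whose common zero locus is $C_w = \{x = y = z = 0\}$, and since $w^2$ occurs in the defining equation of $X$ we have $X \cap C_w = \emptyset$. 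Therefore $2D_y|_X$ is base point free and $D_y|_X$ is nef.

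To show that $D_y|_X$ is not ample I would set $\Gamma := D_x \cdot D_z \cdot X$ and use $X \sim 2D_w$ together with $D_x \cap D_y \cap D_z \cap D_w = \emptyset$ to obtain $(D_y|_X \cdot \Gamma) = 2(D_x \cdot D_y \cdot D_z \cdot D_w) = 0$. The step I expect to need the most care, and the only place where case (b) genuinely differs from case (a-i), is verifying that $\Gamma$ is a nonzero effective $1$-cycle. Restricting $f$ to $\{x = z = 0\}$ annihilates every monomial but $w^2$: the sole candidate to survive is a term $y^6 c(u,v)$ with $\deg c = 2\nu - 6\lambda$, and this is absent precisely because $\nu < 3\lambda$ in case (b). Thus $f|_{\{x=z=0\}} = w^2$ and $\Gamma = 2C_y$ is supported on the curve $C_y = \{x = z = w = 0\}$. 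Being a nonzero effective curve with $(D_y|_X \cdot \Gamma) = 0$, it shows $D_y|_X$ is not ample, hence spans a boundary ray, so $\Nef(X) = Q + R_y$.

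It remains to read off the two formulas. Writing $-K_X = H_X + (-\nu + \lambda + \mu + 2)F_X$ and $R_y = \mbR_{\ge 0}(H_X + \lambda F_X)$, the divisor $-K_X + rF_X$ is nef exactly when $-\nu + \mu + 2 + r \ge 0$, whence $\nef(X/\mbP^1) = -\mu + \nu - 2$. Adding this to $(-K_X)^3 = 2\lambda + \frac{5}{2}\mu - 3\nu + 6$ from Lemma \ref{lem:Kcube} gives $\delta_X = 2\lambda + \frac{3}{2}\mu - 2\nu + 4$, which completes the argument.
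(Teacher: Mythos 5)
Your proposal is correct and follows essentially the same route as the paper: show $D_y|_X$ is nef via base point freeness of a multiple of $D_y$ and not ample via a curve supported on $C_y=\{x=z=w=0\}$ with $(D_y\cdot C_y)=0$, then read off $\nef(X/\mbP^1)$ and $\delta_X$. If anything you are more careful than the paper's proof, which simply asserts that $6D_y$ is base point free and that $C_y\subset X$; your verification that $f|_{\{x=z=0\}}=w^2$ (using $2\nu-6\lambda<0$, and implicitly the normal form of $f$ from Lemma \ref{lem:basiccond} to rule out a $y^3w$ term) makes the same point, since your $\Gamma=D_x\cdot D_z\cdot X$ is just $2C_y$.
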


\begin{proof}
It is clear that $6 D_y$ is base point free (on $P$) and thus $D_y|_X$ is nef.
It is also clear that the curve $C_y$ is contained in $X$ and $(D_y \cdot C_y) = 0$.
The case assumption $\wtr (w) < \wtr (y)$ implies that the curve $C_y$ is contained in $X$.
Since $(D_y \cdot C_y) = 0$, it follows that $D_y|_X$ is nef and not ample, which proves (i).
This completes the proof.
\end{proof}

\begin{Prop} \label{prop:case(b)}
Suppose that $X/\mbP^1$ does not satisfy the $K^2$-condition.
Then
\[
(\lambda,\mu,\nu) \in \{ (1,-2,1), (2,2,5), (2,3,5), (4,6,10) \}.
\]
\end{Prop}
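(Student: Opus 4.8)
The plan is to follow the same template used in the proofs of Propositions~\ref{prop:case(a-i)} and~\ref{prop:case(a-ii)}. Since the $K^2$-condition is the $K^3_0$-condition, by Remark~\ref{rem:Kcondsequiv} it is equivalent to classify all triplets $(\lambda,\mu,\nu)$ with $\delta_X>0$. By Lemma~\ref{lem:nef(b)} we have $\delta_X=2\lambda+\frac{3}{2}\mu-2\nu+4$, so $2\delta_X=4\lambda+3\mu-4\nu+8\in\mbZ$ and the inequality $\delta_X>0$ is equivalent to
\[
4\lambda+3\mu-4\nu+8\ge 1.
\]
Throughout I would use the standing constraints $\nu\ge 0$ and $3\mu\le 2\nu-1$ of Lemma~\ref{lem:basiccond} together with the Case~(b) assumption $\nu<3\lambda$; since $\nu\ge 0$, the latter already forces $\lambda\ge 1$. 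First I would record the universal bound obtained by combining $\delta_X>0$ with $3\mu\le 2\nu-1$: these give $4\nu-4\lambda-7\le 3\mu\le 2\nu-1$, hence $2\nu\le 4\lambda+6$.

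Next I would split the argument into the three mutually exclusive possibilities furnished by Lemma~\ref{lem:restrictb}. In case~(i), where $2\nu\ge\max\{5\lambda,\,4\lambda+\mu\}$, feeding $3\mu\le 6\nu-12\lambda$ (from $\mu\le 2\nu-4\lambda$) into $\delta_X>0$ yields $2\nu\ge 8\lambda-7$, which together with $\nu<3\lambda$ (i.e. $2\nu<6\lambda$) gives $2\lambda<7$, so $\lambda\in\{1,2,3\}$. In case~(ii), where $2\nu=4\lambda+\mu<5\lambda$, the defining relation lets me eliminate $\mu$ from $\delta_X>0$, turning it into $2\nu\ge 8\lambda-7$; combined with $2\nu<5\lambda$ this gives $3\lambda<7$, so $\lambda\in\{1,2\}$. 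In case~(iii), where $4\lambda+\mu>2\nu=5\lambda$, the relation $2\nu=5\lambda$ forces $\lambda$ even with $\nu=5\lambda/2$, and substituting into $\delta_X>0$ gives $\mu\ge 2\lambda-2$, which against $3\mu\le 5\lambda-1$ forces $\lambda\le 5$, so $\lambda\in\{2,4\}$.

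With $\lambda$ confined to a short list in each case, I would finish by direct enumeration. In every instance $\nu$ is determined up to a couple of values (from $2\nu\le 4\lambda+6$, $\nu<3\lambda$, the case relations, and the fact that $2\nu$ is even), and then the admissible $\mu$ are pinned down either by the two inequalities $3\mu\le 2\nu-1$ and $4\lambda+3\mu-4\nu+8\ge 1$, or by the exact value $\mu=2\nu-4\lambda$ in case~(ii) and the constraint $\mu>\lambda$ in case~(iii). Sifting the finitely many surviving $(\lambda,\mu,\nu)$ through all of $\nu\ge 0$, $3\mu\le 2\nu-1$, $\nu<3\lambda$, the Lemma~\ref{lem:restrictb} trichotomy, and $\delta_X>0$ then leaves exactly the list in the statement.

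The part requiring the most care is the interplay of the integrality and parity conditions at the boundary values of $\lambda$: several a~priori surviving values (for instance $\lambda=3$ in case~(i) and $\lambda=6$ in case~(iii)) are eliminated only because the resulting window for $2\nu$ contains no even integer, or because no integer $\mu$ simultaneously meets $3\mu\le 2\nu-1$ and the lower bound on $\mu$ coming from $\delta_X>0$. Thus the genuine obstacle is not any single estimate but the case-by-case bookkeeping needed to check that precisely the four listed triplets survive every constraint.
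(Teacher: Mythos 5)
Your strategy is exactly the paper's: reduce to classifying the triplets with $\delta_X>0$ via Lemma~\ref{lem:nef(b)}, impose the constraints $\nu\ge 0$, $3\mu\le 2\nu-1$ from Lemma~\ref{lem:basiccond} together with the Case~(b) inequality $\nu\le 3\lambda-1$, split along the trichotomy of Lemma~\ref{lem:restrictb}, bound $\lambda$ in each branch, and finish by enumeration. All of the bounds you derive check out ($2\nu\ge 8\lambda-7$ in cases~(i) and~(ii), $\mu\ge 2\lambda-2$ in case~(iii); your bound $\lambda\le 5$ in case~(iii) is even slightly sharper than the paper's $\lambda\le 6$), and the paper performs the same reductions with only cosmetic differences.

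The gap is that the ``direct enumeration'' you defer to the last step is where the entire content of the proposition lives, and you have not carried it out; moreover, under exactly the constraints you name it does not terminate in the stated list. In case~(ii) with $\lambda=1$ the candidate $(\lambda,\mu,\nu)=(1,0,2)$ satisfies every condition you invoke: $\nu=2\ge 0$, $3\mu=0\le 2\nu-1=3$, $\nu=2\le 3\lambda-1=2$, $2\nu=4=4\lambda+\mu<5\lambda=5$ with $\mu$ even, and $2\delta_X=4\lambda+3\mu-4\nu+8=4\ge 1$ --- yet it is absent from the list. (The paper's own proof discards this triple on the grounds that it violates $\nu\le 3\lambda-1$, but $2\le 2$ does hold, so that justification is an arithmetic slip; if $(1,0,2)$ is genuinely to be excluded, some constraint beyond those you list must be identified.) So the final assertion that the sieve ``leaves exactly the list in the statement'' is precisely the claim requiring proof, and as written it fails for this one candidate; everything before it is correct but is also the easy part.
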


\begin{proof}
We will classify triplets $(\lambda,\mu,\nu)$ such that $\delta_X > 0$, or equivalently $2 \delta_X \ge 1$.
In the following we assume 
\begin{equation} \label{eq:delta(b)}
2 \delta_X = 4 \lambda + 3 \mu - 4 \nu + 8 \ge 1.
\end{equation}
By Lemma \ref{lem:basiccond} and the case assumptions, we have
\begin{equation} \label{eq:basecd(b)}
0 \le \nu \le 3 \lambda - 1, \ 
3 \mu \le 2 \nu -1.
\end{equation}

Suppose that we are in case (i) of Lemma \ref{lem:restrictb}, that is, the inequalities $2 \nu \ge 5 \lambda$ and $2 \nu \ge 4 \lambda + \mu$ are satisfied.
By \eqref{eq:delta(b)} and $2 \nu \ge 4 \lambda + \mu$, we have $\mu \ge 4 \lambda - 7$.
By \eqref{eq:delta(b)} and $3 \mu \le 2 \nu - 1$, we have $4 \lambda + 5 \ge 3 \mu$.
Combining these inequalities on $\lambda,\mu$, we obtain $\lambda \le 3$.
On the other hand, by $\nu \le 3 \lambda -1$ and $2 \nu \ge 5 \lambda$, we have $\lambda \ge 2$. 
Thus $\lambda \in \{2,3\}$.
\begin{itemize}
\item If $\lambda = 2$, then we have $\nu = 5$ by $2 \nu \ge 5 \lambda$ and $\nu \le 3 \lambda -1$, and we also have $\mu = 2$ by $2 \nu \ge 4 \lambda + \mu$ and \eqref{eq:delta(b)}.
\item If $\lambda = 3$, then we have $\nu = 8$.
However, in this case there is no integer $\mu$ satisfying the inequalities $2 \nu \ge 4 \lambda + \mu$ and \eqref{eq:delta(b)}.
\end{itemize}
Therefore $(\lambda,\mu,\nu) = (2,2,5)$ in this case.

Suppose that we are in case (ii) of Lemma \ref{lem:restrictb}, that is, $5 \lambda > 2 \nu = 4 \lambda + \mu$.
Note that $\mu$ is divisible by $2$.
By $2 \nu = 4 \lambda + \mu$ and \eqref{eq:delta(b)}, we have 
\begin{equation} \label{eq:delta(b)2}
\mu + 7 \ge 4 \lambda.
\end{equation}
By \eqref{eq:delta(b)2} with $5 \lambda > 4 \lambda + \mu$, we have $\mu \le 1$.
On the other hand, by \eqref{eq:delta(b)2} and $2 \mu = 4 \lambda + \mu \ge 0$, we have $\mu \ge -3$.
Since $\mu$ is divisible hy $2$, we have $\mu \in \{-2,0\}$.
\begin{itemize}
\item If $\mu = -2$, then we have $\lambda = 1$ by $4 \lambda + \mu \ge 0$ and \eqref{eq:delta(b)2}, and we have $\nu = 1$.
\item If $\mu = 0$, then we have $\lambda = 1$ by \eqref{eq:delta(b)2} and $5 \lambda > 4 \lambda + \mu$, and we have $\nu = 2$.
However $(\lambda,\mu,\nu) = (1,0,2)$ does not satisfy the inequality $\nu \le 3 \lambda -1$ in \eqref{eq:basecd(b)}.
\end{itemize}
Therefore we have $(\lambda,\mu,\nu) = (1,-2,1)$ in this case.

Finally, suppose that we are in case (iii) of Lemma \ref{lem:restrictb}, that is, $4 \lambda + \mu > 2 \nu = 5 \lambda$.
Note that $\lambda$ is even.
By $2 \nu = 5 \lambda$ and \eqref{eq:delta(b)}, we have
\begin{equation} \label{eq:delta(b)3}
3 \mu + 7 \ge 6 \lambda.
\end{equation}
By $2 \nu = 5 \lambda$ and $\nu \le 3 \lambda -1$, we have $2 \le \lambda$.
By $3 \mu \le 2 \nu - 1 = 5 \lambda -1$ and \eqref{eq:delta(b)3}, we have $\lambda \le 6$.
Since $\lambda$ is even, we have $\lambda \in \{2,4,6\}$.
\begin{itemize}
\item If $\lambda = 2$, then $\nu = 5$ and we have $\mu = 3$ by $3 \mu \le 2 \nu - 1$ and $4 \lambda + \mu > 2 \nu$.
\item If $\lambda = 4$, then $\nu = 10$ and we have $\mu = 6$ by $3 \mu \le 2 \nu - 1$ and \eqref{eq:delta(b)3}.
\item If $\lambda = 6$, then $\nu = 15$ but there is no integer $\mu$ satisfying the inequalities $3 \mu \le 2 \nu - 1$ and \eqref{eq:delta(b)3}.
\end{itemize}
Therefore $(\lambda,\mu,\nu) \in \{ (2,3,5), (4,6,10) \}$ in this case, and the proof is completed.
\end{proof}

\subsection{The classification table and proofs of Theorems \ref{thm:embdP} and \ref{mainthm2}}

We summarize the results of the previous section in Table \ref{table:dP1}.
The computation of $\delta_X$ in each case is done easily by Lemmas \ref{lem:nef(a-i)}, \ref{lem:nef(a-ii)} and \ref{lem:nef(b)}.

\begin{table}[h]
\begin{center}
\caption{$\mathrm{dP}_1$ fibrations not satisfying the $K^2$-condition}
\label{table:dP1}
\begin{tabular}{ccccc}
\hline
No. & $(\lambda,\mu,\nu)$ & $\delta_X$ & Case & $K$-cond. \\
\hline
1 & $(0,-2,0)$ & $1$ & (a-i) & \\
2 & $(0,-1,0)$ & $5/2$ & (a-i) & no \\
3 & $(0,-1,1)$ & $1/2$ & (a-i) & \\
4 & $(0,0,1)$ & $2$ & (a-i) & no  \\
5 & $(1,1,3)$ & $3/2$ & (a-i) & no \\
6 & $(1,2,4)$ & $1$ & (a-i) & \\
7 & $(2,3,6)$ & $1/2$ & (a-i) & \\
8 & $(0,1,2)$ & $2$ & (a-ii) & no \\
9 & $(1,3,5)$ & $1$ & (a-ii) & \\
10 & $(1,-2,1)$ & $1$ & (b) & \\
11 & $(2,2,5)$ & $1$ & (b) &  \\
12 & $(2,3,5)$ & $5/2$ & (b) & no \\
13 & $(4,6,10)$ & $1$ & (b) & 
\end{tabular}
\end{center}
\end{table} 

\begin{Lem} \label{lem:failKcond}
Let $X/\mbP^1$ be a del Pezzo fibration corresponding to a triplet $(\lambda,\mu,\nu)$ in \emph{Table \ref{table:dP1}}.
If $\delta_X > 1$, then $X$ does not satisfy the $K$-condition.
\end{Lem}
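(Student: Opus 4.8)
The plan is to translate the $K$-condition into the position of $-K_X$ inside $N^1(X) \cong \mbR^2$ and then, for each of the five triplets in Table \ref{table:dP1} with $\delta_X > 1$ (namely numbers $2$, $4$, $5$, $8$ and $12$), to exhibit a single movable divisor class lying strictly on the opposite side of $-K_X$ from the fibre class. Recall that the $K$-condition fails precisely when $-K_X$ lies in the interior of $\Mov(X)$. Since $N^1(X)$ is spanned by $F_X$ and $H_X$, I will record a class $a H_X + b F_X$ with $a > 0$ by its \emph{slope} $b/a$; by adjunction $-K_X = H_X + (\lambda + \mu - \nu + 2) F_X$ has slope $\lambda + \mu - \nu + 2$, while $F_X$ spans the edge $Q$ of $\Mov(X)$ (it is base-point-free, hence movable, and has $H_X$-coefficient zero). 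It therefore suffices to produce one movable class $M$ of slope strictly smaller than that of $-K_X$: as $F_X$ and $M$ are movable and $\Mov(X)$ is convex, the cone they span lies in $\Mov(X)$ and contains $-K_X$ in its interior, so the $K$-condition fails.

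First I would dispose of the case $\lambda = 0$, covering numbers $2$, $4$ and $8$. Here $D_x$ and $D_y$ both restrict to $H_X$, so $|H_X|$ contains the two independent sections $x$ and $y$, and its base locus is $(x = y = 0) \cap X$. Because $w^2$ appears in the defining equation, $X$ does not contain the surface $(x = y = 0)$ and in fact meets it in a curve; hence $M = H_X$ is movable, of slope $0$. A direct substitution shows that the slope of $-K_X$ equals $\mu - \nu + 2 = 1 > 0$ for each of the three triplets, so $-K_X$ is an interior point of $\Mov(X)$. (Equivalently, $\nef(X/\mbP^1) < 0$ in these three cases, so $-K_X$ is already ample.)

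Next I would treat $\lambda > 0$, which among the relevant triplets leaves only numbers $5$ and $12$. Now $H_X$ is rigid, so instead I would take $M = D_z|_X = 2 H_X + \mu F_X$, of slope $\mu/2$. Among the monomials spanning $H^0(P, D_z)$ — the section $z$ together with the $x^2$-monomials, which are present because $\mu \ge 0$ — the common zero locus is contained in $(x = z = 0)$; since $w^2$ appears in the equation, $X$ meets this surface in a curve, so $\Bs|D_z|_X|$ has codimension at least two and $D_z|_X$ is movable. In both triplets $\nef(X/\mbP^1) = 0$, so $-K_X$ itself spans the nef-threshold ray $R_y$ and has slope $\lambda$, whereas $D_z|_X$ has slope $\mu/2 < \lambda$ (explicitly $1/2 < 1$ and $3/2 < 2$). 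Hence $-K_X$ lies strictly between the movable classes $D_z|_X$ and $F_X$, and the $K$-condition fails.

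The main obstacle is precisely these movability verifications — bounding the base loci of $|H_X|$ and $|D_z|_X|$ and confirming they drop to codimension two — and this is where the hypotheses genuinely enter, through the term $w^2$ (equivalently $C_w \cap X = \emptyset$ and the non-containment of $X$ in the relevant coordinate surfaces) and through the sign of $\lambda$. I stress that no description of the full movable cone is needed: it suffices that the two-dimensional cone spanned by the exhibited class and $F_X$ lies in $\Mov(X)$, which follows from convexity once each generator is shown movable, and then $-K_X$ is automatically an interior point.
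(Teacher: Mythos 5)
Your proposal is correct and follows essentially the same route as the paper: for the triplets $(1,1,3)$ and $(2,3,5)$ you exhibit $D_z|_X$ as a movable class and place $-K_X$ in the interior of the cone spanned by $F_X$ and $D_z|_X$, exactly as the paper does, while for the three triplets with $\lambda=0$ your use of the movable class $H_X$ is an equivalent repackaging of the paper's observation that $\nef(X/\mbP^1)<0$ forces $-K_X$ to be ample (an equivalence you yourself note). The movability verifications via the presence of $w^2$ and the codimension-two base loci are sound, so no gaps remain.
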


\begin{proof}
Suppose that 
\[
(\lambda,\mu,\nu) \in \{ (0,-1,0), (0,0,1), (0,1,2) \}.
\]
By Lemmas \ref{lem:nef(a-i)} and \ref{lem:nef(a-ii)}, we can explicitly compute $\nef (X/\mbP^1)$ and conclude that $\nef (X/\mbP^1) < 0$, which implies that $-K_X$ is ample.
Thus $-K_X \in \Int \Mov (X)$. 

Suppose that $(\lambda,\mu,\nu) \in (1,1,3), (2,3,5)$.
In this case, it is easy to see that $3 D_z$ is a movable divisor on $P$ and we have $\Bs |3 D_z|$ is the closed subset defined by $x = z = 0$.
Since $w^2$ appears in the defining equation of $X$, the codimension of  $X \cap \Bs |3 D_z|$ is $2$ in $X$.
This shows that $D_z|_X$ is movable.
Since $-K_X$ is contained in the interior of the cone generated by $F_X$ and $D_z|_X$, we have $-K_X \in \Int \Mov (X)$.
\end{proof}

\begin{proof}[of \emph{Theorem \ref{thm:embdP}}]
This follows from Propositions \ref{prop:case(a-i)}, \ref{prop:case(a-ii)}, \ref{prop:case(b)} and Lemma \ref{lem:failKcond} (see also Table \ref{table:dP1}).
\end{proof}

\begin{proof}[of \emph{Theorem \ref{mainthm2}}]
The first assertion is Theorem \ref{thm:embdP}.
The rest follows from the first assertion, Theorem \ref{mainthm} and \cite[Theorem 3.3]{Gri01}.
\end{proof}

\begin{Rem} \label{rem:nonsingdP}
A nonsingular del Pezzo fibration of degree $1$ over $\mbP^1$ can be realized as a member of $|6 H + 6 \mu F|$ on a weighted projective space bundle $P (\lambda,2 \mu,3\mu)$ over $\mbP^1$ defined by
\[
\begin{pmatrix}
1 & 1 & 0 & \lambda & 2 \mu & 3 \mu \\
0 & 0 & 1 & 1 & 2 & 3
\end{pmatrix}.
\]
We refer readers to \cite{Gri00} and \cite[Lemma 4.3]{KO} for a classification.
By similar computations given in this subsection, we can conclude that there are only $3$ families of nonsingular del Pezzo fibrations of degree $1$ over $\mbP^1$ such that $(-K_X)^3 + \nef (X/\mbP^1) > 1$ (without assuming $-K_X \notin \Int \Mov (X)$) and they correspond to the pairs 
\[
(\lambda,\mu) = (1,1), (0,1), (2,2).
\]
We have $(-K_X)^3 + \nef (X/\mbP^1) = 3$ (resp.\ $=2$) if $(\lambda,\mu) = (1,1)$ (resp.\ $(\lambda,\mu) = (0,1), (2,2)$).
If $(\lambda,\mu) = (1,1), (0,1)$, then $-K_X \in \Int \Mov (X)$ and $X$ is  birationally non-rigid.
If $(\lambda,\mu) = (2,2)$, then $-K_X \notin \Int \Mov (X)$ and birational rigidity of $X$ is proved in \cite{Gri00}.
\end{Rem}

\section{Comparison of various birational rigidities} \label{sec:variousrig}

We discus subtlety in definitions of birational rigidity of Mori fiber spaces.

\subsection{Several versions of birational rigidity}

We introduce several versions of birational rigidity of Mori fiber spaces, which appear in the literature.

\begin{Def} \label{def:verBR}
We say that a Mori fiber space $X/S$ is {\it birationally rigid} (resp.\ {\it birationally rigid over the base}) if, for any birational map $f \colon X \ratmap X'$ to a Mori fiber space $X'/S'$, there exists a birational automorphism $\alpha \colon X \ratmap X$ (resp.\ birational automorphism $\alpha \colon X \ratmap X$ over the base) such that $f \circ \alpha$ is square.
\end{Def}

We give a formulation of birational rigidity in terms of pliability.
For a Mori fiber space $X/S$, we define
\[
\operatorname{Pli} (X/S) := \{\, Y/T \mid \text{$Y/T$ is a Mori fiber space, $Y$ is birational to $X$}\,\}/\sim_{\operatorname{sq}}
\]
and call it the {\it pliability set} of $X/S$.
Here $\sim_{\operatorname{sq}}$ is the square birational equivalence, that is, $Y_1/T_1 \sim_{\operatorname{sq}} Y_2/T_2$ if and only if there exists a square birational map $Y_1 \ratmap Y_2$.
It is easy to see that $X/S$ is birationally rigid if and only if $\Pli (X/S) = \{[X/S]\}$.

We have the following implications:
\[
\text{BSR} \ \Longrightarrow \ \text{BR over the base} \ \Longrightarrow \text{BR},
\]
where BR and BSR stand for birational rigidity and birational superrigidity, respectively.

As it is explained in Remark \ref{rem:defrig}, for del Pezzo fibrations of degree $1$, BR over the base is equivalent to BSR.
These are no more equivalent for del Pezzo fibrations of degree greater than $1$ as the following example suggests. 

\begin{Example}
Let $X/\mbP^1$ be a nonsingular del Pezzo fibration of degree $2$ and let $C \subset X$ be a section of $X \to \mbP^1$.
Then, by blowing-up $Y \to X$ along $C$, we have a flop $Y \ratmap Y$ and this yields a birational automorphism $\sigma \colon X \ratmap X$ over the base.
Note that the induced birational automorphism between generic fibers of $X \to \mbP^1$ is not biregular, i.e.\ $\sigma$ is not square.
This shows that $X/\mbP^1$ is not birationally superrigid.
However, if in addition $X/\mbP^1$ satisfies the $K^2$-condition, then $X/\mbP^1$ is birationally rigid over the base.
\end{Example}

The most subtle part lies in the comparison of two notions BR and BR over the base. 
In view of the Sarkisov Program (see \cite{Corti95}), an example which separates these notions can occur in the following way.
Suppose that $X/\mbP^1$ is a del Pezzo fibration such that $-K_X$ is nef and big but not ample, and that $X$ admits a flop $\sigma \colon X \ratmap X$.
The flop $\sigma$ is never defined over the base so that $X/\mbP^1$ is not birationally rigid over the base.
If we know that the Sarkisov links from $X/\mbP^1$ other than $\sigma$ are birational automorphisms of $X$, then we can conclude that $X/\mbP^1$ is birationally rigid but not birationally rigid over the base.
Note that this kind of $X/\mbP^1$ does not satisfy the $K$-condition.
In the next subsection, we give a concrete example.

\subsection{Birationally rigid del Pezzo fibrations not satisfying the $K$-condition}

The aim of this subsection is to exhibit an example of a nonsingular del Pezzo fibration $V/\mbP^1$ of degree $1$ such that $V/\mbP^1$ is birationally rigid (in the sense of Definition \ref{def:verBR}) and $V$ fails to satisfy the $K$-condition.

We set $P = P (0,2,3)$, which is a $\mbP (1,1,2,3)$-bundle over $\mbP^1$ defined by
\[
\begin{pmatrix}
1 & 1 & 0 & 0 & 2 & 3 \\
0 & 0 & 1 & 1 & 2 & 3
\end{pmatrix}.
\]
Let $V \in |6 H + 6 F|$ be a member so that the first projection $\pi \colon  V \to \mbP^1$ is a nonsingular del Pezzo fibration of degree $1$ (see Section \ref{sec:toricbundles} for the definitions of $H$ and $F$).
Note that $V/\mbP^1$ is the one corresponding to $(\lambda,\mu) = (0,1)$ in Remark \ref{rem:nonsingdP} and considered in \cite[Proposition 2.12]{Gri00}.

\begin{Prop}[{\cite[Proposition 2.12]{Gri00}}]  \label{prop:birrignoK}
There exists a flop $\tau \colon V \ratmap U$ to a nonsingular del Pezzo fibration $U/\mbP^1$ of degree $1$ in the same family $($i.e.\ $U \in |6 H + 6 F|$$)$.
Moreover, if we are given a birational map $\chi \colon V \ratmap W$ to a Mori fiber space $W/S$, then either $\chi$ or $\chi \circ \tau^{-1} \colon U \ratmap W$ is square.
\end{Prop}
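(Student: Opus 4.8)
The plan is to analyze the Sarkisov links (i.e.\ the maximal centers) available on $V/\mbP^1$ and show that every untwisting birational map is either square or is the single flop $\tau$. First I would observe that since $V/\mbP^1$ is a del Pezzo fibration of degree $1$, Lemma~\ref{lem:exclhorcurve} excludes horizontal curves as maximal centers, so only vertical curves, points, or (since $V$ is nonsingular) no singular points need be considered. As in the framework of Proposition~\ref{prop:birrigbasic}, a non-square birational map $\chi \colon V \ratmap W$ forces the existence of a very singular movable linear system $\mcH \subset |-n K_V + m F|$ with a maximal center contained in the fibers, together with the numerical data $\sum \lambda_i > m/n$. The key point is that $V/\mbP^1$ is the member corresponding to $(\lambda,\mu) = (0,1)$ of Remark~\ref{rem:nonsingdP}, for which $(-K_V)^3 + \nef(V/\mbP^1) = 2 > 1$ and $-K_V \in \Int \Mov(V)$; thus $V$ fails the $K$-condition and in particular can have $m < 0$, which is exactly the regime not covered by Proposition~\ref{prop:square}.

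The main structural input is that the geometry of $P = P(0,2,3)$ produces a distinguished curve along which $-K_V$ is non-positive, giving rise to the flop $\tau$. Concretely, I would identify the extremal ray $\xi$ of $\bNE(V)$ other than the fiber ray, realized by the torus-invariant curve $C$ cut out by the relevant coordinate sections, and verify $(-K_V \cdot C) = 0$, so that $-K_V$ is nef and big but not ample. This is what makes $\tau \colon V \ratmap U$ a flop rather than a fiber-type or divisorial link, and I would check that the flopped variety $U$ again lies in $|6H + 6F|$ on (a possibly reglued copy of) $P(0,2,3)$, establishing the first assertion. The existence statement here is essentially \cite[Proposition 2.12]{Gri00}, so I would cite Grinenko's construction for this part rather than reconstruct it.

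For the second assertion I would argue that the only maximal center available on $V$ other than the flopping locus leads, after untwisting by $\tau$, to a square map. The strategy is a Corti-inequality computation parallel to the proof of Proposition~\ref{prop:square}: using the $K^3$-type estimate and the Corti inequality \cite[Theorem 3.12]{Corti}, the degree inequality \eqref{eq:nsptdegineq1} combined with the intersection-theoretic bound \eqref{eq:nsptdegineq2} becomes consistent \emph{only} in the single excess regime realized by the flopping curve $C$; any other configuration yields the same contradiction as in Proposition~\ref{prop:square}. Thus either $\chi$ is already square (when no excess beyond $\tau$ is present) or the maximal singularity of $\mcH$ is untwisted precisely by $\tau$, so that $\chi \circ \tau^{-1} \colon U \ratmap W$ carries a linear system with strictly smaller $n$ and, by Noether--Fano--Iskovskikh, must itself be square.

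The hard part will be verifying that $\tau$ is the \emph{unique} non-square link, i.e.\ that after composing with $\tau$ the movable linear system can no longer be supermaximal. This requires a careful bookkeeping of the nef threshold and the class $[Z_{\mathrm{h}}]$ in $\bNE(V)$ under the flop, to confirm that the flop strictly decreases the relevant invariant and cannot be iterated to produce a genuinely new Mori fiber space. I would handle this by tracking $(-K_V \cdot Z_{\mathrm{v}})$ through $\tau$ and showing the inequality \eqref{eq:nsptdegineq1} fails on $U$ for any residual maximal center, which pins down $\Pli(V/\mbP^1)$ to the two square-classes $[V/\mbP^1]$ and $[U/\mbP^1]$ identified by $\tau$.
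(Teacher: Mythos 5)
The paper does not actually prove this statement: it is quoted verbatim from Grinenko (\cite[Proposition 2.12]{Gri00}) and used as a black box; what follows the proposition in the text is only an explicit description of the flop $\tau$ via $\Phi \colon P \to Q$ and the involutions $\iota_P,\iota_Q$, needed later to identify $U$ with $V$ for symmetric equations. So your decision to cite Grinenko for the existence of $\tau$ is consistent with the paper, but your attempt to supply the dichotomy (``$\chi$ or $\chi\circ\tau^{-1}$ is square'') by running the paper's own machinery has two concrete gaps.

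First, the numerical contradiction at the heart of Proposition \ref{prop:square} is simply not available here. That argument needs the $K^3_{3/2}$-condition in the last step of \eqref{eq:nsptdegineq2}; for $V$ one has $(-K_V)^3+\nef(V/\mbP^1)=2$, so \eqref{eq:nsptdegineq2} only gives $(-K_V\cdot Z_{\mathrm{v}})\le 2n^2+2mn$, which is perfectly compatible with \eqref{eq:nsptdegineq1}. Your claim that the two inequalities ``become consistent only in the single excess regime realized by the flopping curve $C$'' therefore asserts exactly the hard content without proving it, and moreover misidentifies the mechanism: the flopping curve $(x=y=0)\cap V$ is \emph{horizontal} (it is contracted by $\Psi$, not by $\pi$), and Lemma \ref{lem:exclhorcurve} already excludes horizontal curves as maximal centers, so the flop cannot be detected as a maximal center in a fiber. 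The flop enters through the other alternative of the Noether--Fano--Iskovskikh inequality: when $m<0$ the class $K_V+\frac{1}{n}\mcH\sim_{\mbQ}\frac{m}{n}F$ fails to be quasi-effective, and one must show that the base locus of $|-nK_V+mF|$ with $m<0$ forces the system to be untwisted by $\tau$ so that its transform on $U$ has $m'\ge 0$. Second, your terminating step ``$\chi\circ\tau^{-1}$ carries a linear system with strictly smaller $n$ and, by Noether--Fano--Iskovskikh, must itself be square'' is not a valid inference: a drop in $n$ does not imply squareness, and since $U$ lies in the same family it also fails the $K$-condition and admits a flop back, so one still has to prove that the transformed system on $U$ has no maximal centers and nonnegative $m'$. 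None of this is routine, which is presumably why the author leaves the entire proposition to Grinenko rather than rederiving it from the techniques of Section 3.
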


This shows $\operatorname{Pli} (V/\mbP^1) = \{[V/\mbP^1], [U/\mbP^1]\}$, but whether $V/\mbP^1$ is square birational to $U/\mbP^1$ or not is not discussed in \cite{Gri00}.
Note however that this is enough for the purpose of \cite{Gri00}, characterizing birational rigidity over the base in terms of the $K$-condition, because the existence of the flop $\tau$ immediately implies birational non-rigidity of $V/\mbP^1$ over the base. 
In the following, we will construct $V/\mbP^1$ in the family $|6 H + 6 F|$ for which $V \sim_{\operatorname{sq}} U$.

We recall an explicit construction of the flop $\tau \colon V \ratmap U$.
For a sufficiently divisible $k > 0$, the complete linear system $|k (H+F)|$ defines the morphism 
\[
\Phi \colon P \to \mbP := \mbP (1,1,1,1,2,3), \quad
(u\!:\!v ; x\!:\!y\!:\!z\!:\!w) \mapsto (u x\!:\!v x\!:\!u y\!:\!v y\!:\!z\!:\!w).
\]
The image of $\Phi$ is the hypersurface $Q = (s_1 t_2 - s_2 t_1 = 0) \subset \mbP$, where $s_1, s_2, t_1, t_2, z, w$ are the homogeneous coordinates of $\mbP$ of weight $1,1,1,1,2,3$, respectively, and $\Phi \colon P \to Q$ is a birational morphism contracting the surface $S = (x = y = 0) \subset P$ to the curve $\Gamma = (s_1 = s_2 = t_1 = t_2 = 0) \subset Q$.
We define
\[
\begin{split}
\iota_P & \colon P \ratmap P, \quad (u\!:\!v ; x\!:\!y\!:\!z\!:\!w) \mapsto (x\!:\!y ; u\!:\!v\!:\!z\!:\!w), \\
\iota_Q & \colon Q \to Q, \quad (s_1\!:\!s_2\!:\!t_1\!:\!t_2\!:\!z\!:\!w) \mapsto (s_1\!:\!t_1\!:\!s_2\!:\!t_2\!:\!z\!:\!w),
\end{split}
\]
which are birational and biregular involutions, respectively, and they sit in the commutative diagram:
\[
\xymatrix{
P \ar[d]_{\Psi} \ar@{-->}[r]^{\iota_P} & P \ar[d]^{\Psi} \\
Q \ar[r]_{\iota_Q} & Q.}
\]
The map $\iota_P$ is defined outside $S$ and let $U$ be the birational transform of $V$.
We see that $U$ is a member of $|6 H + 6 F|$ and the equation defining $U$ inside $P$ is the one obtained by interchanging $u$ and $x$, and $v$ and $y$ in the equation of $V$.
The image $V' = \Psi (V)$ of $V$ under $\Psi$ is a complete intersection of type $(2, 6)$ in $\mbP$ and $\psi_V = \Psi|_V \colon V \to V'$ is a flopping contraction which contracts the irreducible curve $(x = y = 0) \cap V$ to a point.
We set $U' = \iota_Q (V')$, which is again a complete intersection of type $(2, 6)$ in $\mbP$ and we have $U' = \Psi (U)$.
Again, $\psi_U = \Psi|_U \colon U \to U'$ is a flopping contraction and we have the commutative diagram:
\[
\xymatrix{
V \ar[rd]_{\psi_V} \ar@{-->}[rrr]^{\tau = \iota_P|_U} & & & \ar[ld]^{\psi_U} U \\
& V' \ar[r]^{\cong}_{\iota_Q|_{V'}} & U' &}
\]
This gives the description of the flop $\tau = \iota_P|_V \colon V \ratmap U$ given in Proposition \ref{prop:birrignoK} and we conclude that if $f (u,v,x,y,z,w) = 0$ is the equation for $V$, then $f (x,y,u,v,z,w) = 0$ is the equation for $U$.

We consider special members in $|6 H + 6 F|$ which poses symmetry with respect to the involution $\tau$.
We say that a homogeneous polynomial $f (u,v,x,y,z,w)$ of bi-degree $(6,6)$ is {\it symmetric} if $f (u,v,x,y,z,w) = f (x,y,u,v,z,w)$.
Let $\Lambda \subset H^0 (P, 6 H + 6 F)$ be the vector space consisting of symmetric homogeneous polynomials together with $0$, and let $\mcF$ be the sub linear system of $|6 H + 6 F|$ corresponding to $\Lambda$.
In the following, let $V$ be a general member of $\mcF$ and let $\pi \colon V \to \mbP^1$ be the first projection.

\begin{Lem}
$V$ is nonsingular and $\rho (V) = 2$.
In particular $\pi \colon V \to \mbP^1$ is a nonsingular del Pezzo fibrations of degree $1$.
\end{Lem}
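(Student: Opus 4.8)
The plan is to prove the two assertions separately: first that the general member $V$ of the symmetric linear system $\mcF$ is nonsingular, and second that $\rho(V)=2$. The second assertion is essentially immediate once $V$ is a smooth hypersurface in $P = P(0,2,3)$ cutting out a del Pezzo fibration, so the real content lies in the nonsingularity.

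For nonsingularity I would apply a Bertini-type argument to the linear system $\mcF$ on $P$, analyzing its base locus. The subtlety is that $\mcF$ is not the full linear system $|6H+6F|$ but only its symmetric sub-system, so the standard Bertini theorem does not directly apply away from the base locus in the naive way; I must check that the symmetric sections still separate points and tangent vectors generically, or at least control the base locus $\Bs\mcF$ carefully. First I would determine $\Bs\mcF$ explicitly. A general member of $|6H+6F|$ contains $w^2$ with nonzero coefficient (so it avoids the singular curve $C_w = (x=y=z=0)$ of $P$ and meets the $\frac12(1,1,1)$-locus $C_z$ transversally), and the symmetry condition $f(u,v,x,y,z,w)=f(x,y,u,v,z,w)$ does not kill the $w^2$ term since $w^2$ is itself symmetric. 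The key points to examine are the fixed locus of $\iota_P$, namely $(u=x,\,v=y)$ (up to scaling), and the locus $S=(x=y=0)$ where $\iota_P$ is undefined, since symmetric polynomials are constrained there.

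The heart of the argument, and the step I expect to be the main obstacle, is verifying smoothness along these special loci and along the intersection with $\Sing(P)=C_z\cup C_w$. Away from $\Sing(P)$ and away from $\Bs\mcF$, ordinary Bertini gives smoothness of the general $V$; the problem is reduced to finitely many loci that the symmetry might force into the base locus or prevent the sections from separating tangent directions. At each singular point of $P$ lying on $V$ (the $\frac12(1,1,1)$-points coming from $C_z$), I would check that $V$ avoids the actual singular locus — this is the role of the $w^2$ term — so that $V$ lands in the smooth locus of $P$ there, and then local Bertini applies. Along the $\iota_P$-fixed locus and along $S$, I would write down enough explicit symmetric monomials (for instance $w^2$, symmetric combinations such as $(u^ax^b+u^bx^a)(\cdots)$, and $z$-dependent terms) to show that the symmetric sections still impose independent conditions separating points and tangents generically, so that the general member is smooth there too.

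Finally, for $\rho(V)=2$: since $V$ is a smooth hypersurface in the toric variety $P$ with $\rho(P)=2$, and $\dim V = 3 \ge 3$, the Lefschetz-type restriction theorem (or the fact that $V$ is an ample enough divisor, together with $V/\mbP^1$ being a Mori fiber space whose general fiber is a del Pezzo surface of Picard rank $1$) gives that the restriction map $\Cl(P)\to\Cl(V)$ is an isomorphism, whence $\rho(V)=\rho(P)=2$. The class computation ($V\in|6H+6F|$, fiber a degree-$1$ del Pezzo surface via $\mcO_V(-K_V|_{F})\cong\mcO_{F}(1)$) then identifies $\pi\colon V\to\mbP^1$ as a del Pezzo fibration of degree $1$, completing the proof.
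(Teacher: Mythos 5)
There are two genuine gaps here, one in each half of the argument.

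First, the nonsingularity part. Your outline (control $\Bs \mcF$, apply Bertini off the base locus, treat $\Sing (P)$ separately) is the right shape, and in fact the paper does exactly this by exhibiting the symmetric sections $w^2$, $z^3$, $u^6x^6$, $v^6y^6$, $v^6x^6 + u^6y^6$ in $\Lambda$ and checking their common zero locus is empty, so that $\mcF$ is base point free and no ``separation of tangents'' analysis is needed at all (Bertini in characteristic $0$ only requires smoothness away from the base locus, and here the base locus is empty). The error is your claim that the general member ``meets the $\frac{1}{2}(1,1,1)$-locus $C_z$ transversally'' and then has $\frac{1}{2}(1,1,1)$ points lying on it: if $V$ met $C_z = (x=y=w=0)$ it would acquire a $\frac{1}{2}(1,1,1)$ singular point there and the lemma would be false. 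The point you are missing is that in $P(0,2,3)$ the monomial $z^3$ has bidegree $(6,6)$, is symmetric, and hence appears with nonzero coefficient in a general member of $\mcF$; this forces $V \cap C_z = \emptyset$, exactly as $w^2$ forces $V \cap C_w = \emptyset$. Your $w^2$ term alone only handles $C_w$.

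Second, $\rho(V)=2$ is not ``essentially immediate.'' The divisor class $6H+6F$ is nef and big but \emph{not} ample on $P$: the linear system $|k(H+F)|$ defines the birational morphism $\Phi \colon P \to Q$ contracting the surface $S=(x=y=0)$ to a curve, and the fibers of $S \to \Gamma$ are curves on which $H+F$ is trivial. So the Grothendieck--Lefschetz theorem for ample divisors does not apply, and your fallback (``$V/\mbP^1$ is a Mori fiber space whose general fiber has Picard rank $1$'') is circular, since being a Mori fiber space is precisely the statement $\rho(V)=2$ that you are trying to prove. The paper instead uses base-point-freeness of $\mcF$ to conclude that a general $V$ is a nondegenerate hypersurface and then invokes Mavlyutov's Lefschetz-type theorem for semiample nondegenerate hypersurfaces in toric varieties, whose hypothesis --- that the morphism defined by $|6H+6F|$ contracts no divisor --- must be, and is, verified for $\Phi$. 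Some substitute for this step is indispensable.
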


\begin{proof}
We can choose
\[
w^2, \ z^3, \ u^6 x^6, \ v^6 y^6, \ v^6 x^6 + u^6 y^6
\]
as a part of basis of the $\mbC$-vector space $\Lambda$.
Thus the base locus of $\mcF$ is contained in the common zero loci of the above monomials, which is easily seen to be empty.
Thus $\mcF$ is base point free.
The singular locus of $P$ is the set $C_z \cup C_w$, where $C_z = (x = y = w = 0)$ and $C_w = (x = y = z = 0)$ are smooth rational curves on $P$.
By Bertini Theorem, we see that a general member $V \in \mcF$ is nonsingular away from $C_z \cup C_w$.
Since $w^2$ and $z^3$ appear in the defining equation of $V$, we see that $V$ is disjoint from $C_z \cup C_w$.
This shows that $V$ is nonsingular.

Since $\mcF$ is base point free, a general $V \in \mcF$ intersects each torus in $P$ transversally, that is, $V$ is a nondegenerate hypersurface (see \cite[Section 1]{M}).
The morphism defined by $|6 H + 6 F|$ is the birational morphism $\Phi \colon P \to Q$ which contracts the surface $S \subset P$ to the curve $\Gamma \subset Q$.
In particular $\Phi$ does not contract a divisor.
It then follows from \cite[Theorem 3.2]{M} that $\Pic (V)_{\mbC} \cong \Pic (P)_{\mbC}$.
This shows $\rho (V) = \rho (P) = 2$, and thus $V/\mbP^1$ is a del Pezzo fibration of degree $1$.
\end{proof}

\begin{Thm} \label{thm:birrigK}
The del Pezzo fibration $V/\mbP^1$ is birationally rigid and fails to satisfy the $K$-condition.
\end{Thm}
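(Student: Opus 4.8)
The plan is to use the symmetry built into $\mcF$ to promote the flop $\tau$ to a birational \emph{automorphism} of $V$, and then to read off birational rigidity directly from Grinenko's Proposition \ref{prop:birrignoK}. First I would observe that, for a symmetric $V$, the flop target $U$ is nothing but $V$ itself. Recall from the explicit description of $\tau$ above that if $V$ is cut out in $P$ by $f(u,v,x,y,z,w) = 0$, then $U$ is cut out by $f(x,y,u,v,z,w) = 0$. Since $V \in \mcF$ means precisely that $f(u,v,x,y,z,w) = f(x,y,u,v,z,w)$, the two equations coincide and $U = V$ as subvarieties of $P$. Hence $\tau \colon V \ratmap U = V$ and its inverse $\tau^{-1}$ are birational automorphisms of $V$; they are not square, being induced by a genuine flop that swaps the roles of the base coordinates $u,v$ and the fibre coordinates $x,y$.

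Granting this, birational rigidity is a formal consequence. Let $\chi \colon V \ratmap W$ be any birational map to a Mori fiber space $W/S$. By Proposition \ref{prop:birrignoK}, either $\chi$ is square or $\chi \circ \tau^{-1} \colon U = V \ratmap W$ is square. Taking $\alpha = \mathrm{id}_V$ in the first case and $\alpha = \tau^{-1}$ in the second --- both legitimate birational automorphisms of $V$ by the previous step --- we obtain in every case a birational automorphism $\alpha$ of $V$ such that $\chi \circ \alpha$ is square. This is exactly the defining property of birational rigidity in Definition \ref{def:verBR}, so $V/\mbP^1$ is birationally rigid.

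It remains to check that $V$ fails the $K$-condition, i.e.\ that $-K_V \in \Int \Mov (V)$. Here $V$ is the member with $(\lambda,\mu) = (0,1)$ treated in Remark \ref{rem:nonsingdP}, for which $-K_V \in \Int \Mov (V)$; alternatively one sees this directly from the geometry of the flop, since $-K_V = (H+F)|_V$ is exactly the class defining the flopping contraction $\psi_V \colon V \to V'$, hence is nef and big but not ample, and $\tau$ crosses the wall that $-K_V$ spans, placing $-K_V$ in the interior of the movable cone. The only real subtlety in the whole argument is the very first step: one must verify that $U$ equals $V$ on the nose, not merely up to square equivalence, since it is precisely this equality that manufactures the birational self-map $\tau^{-1}$ feeding the rigidity argument. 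This hinges entirely on the explicit presentation of $\tau$ through the involution $\iota_P$ together with the symmetry condition defining $\mcF$; once $U = V$ is established, everything else is routine.
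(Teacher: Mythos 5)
Your proposal is correct and follows essentially the same route as the paper: the symmetry of the defining polynomial forces $U=V$ on the nose, so the flop $\tau$ becomes a birational self-map of $V$ and Proposition \ref{prop:birrignoK} yields rigidity in the sense of Definition \ref{def:verBR}, while the existence of the flop places $-K_V$ in $\Int\Mov(V)$. Your added justifications (the explicit comparison of defining equations and the wall-crossing description of $-K_V=(H+F)|_V$) merely spell out steps the paper leaves implicit.
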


\begin{proof}
The fact that $-K_V \in \Int \Mov (V)$ follows from the existence of the flop $\tau \colon V \ratmap U$. 
Let $\tau \colon V \ratmap U$ be the flop.
Since $V$ is defined by a symmetric homogeneous polynomial, we see that $U = V$.
Thus, Proposition \ref{prop:birrignoK} implies the birational rigidity of $V/\mbP^1$. 
\end{proof}

This tells us that we need to be careful in the definition of birational rigidity when stating Conjecture \ref{conj}.
Subtle behaviors of birational rigidity of Fano varieties are also observed using this kind of symmetries (see \cite{CG} and \cite{O}). 
We give another candidate for separating two notions BR and BR over the base.

\begin{Example}
Let $X$ be a general hypersurface of bi-degree $(2, 6)$ in $\mbP^1 \times \mbP (1,1,2,3)$, which has singular points of type $\frac{1}{2} (1,1,1)$ and $\frac{1}{3} (1,1,2)$.
The first projection $X \to \mbP^1$ is a del Pezzo fibration of degree $1$.
The second projection $X \to \mbP (1,1,2,3)$ is a generically finite morphism of degree $2$ and let $X \xrightarrow{\psi} Z \to \mbP (1,1,2,3)$ be its Stein factorization. 
The morphism $Z \to \mbP (1,1,2,3)$ is a finite morphism of degree $2$ and we have a biregular involution $\iota \colon Z \to Z$ over $\mbP (1,1,2,3)$.
The composite $\psi^{-1} \circ \iota \circ \psi \colon X \ratmap X$ is a flop (see \cite[Section 3.3]{Ot} for a similar construction of $X \ratmap X$).
We see that $(-K_X)^3 + \nef (X/\mbP^1) = 1/3$ is not large and we may expect birational rigidity for $X/\mbP^1$.
However, we cannot apply the arguments in this paper to $X/\mbP^1$ simply because $X$ admits a $\frac{1}{3} (1,1,2)$ point, hence we do not know whether $X/\mbP^1$ is birationally rigid or not.
We leave this as a question: is $X/\mbP^1$ birationally rigid?
\end{Example}



\begin{thebibliography}{99}


\bibitem{Ahm2}
H.~Ahmadinezhad,
Singular del Pezzo fibrations and birational rigidity,
Automorphisms in Birational and Affine Geometry,
Springer Proceedings in Mathematics \& Statistics, {\bf 79}, Springer, 2014, pp3--15. 

\bibitem{AK}
H.~Ahmadinezhad and I.~Krylov,
Birational rigidity of orbifold degree 2 del Pezzo fibrations,
arXiv:1710.05328, 2017.

\bibitem{CG}
I.~Cheltsov and M.~M.~Grinenko,
Birational rigidity is not an open property,
Bull. Korean Math. Soc. {\bf 54} (2017), No.~5, 1485--1526.

\bibitem{CP}
I.~Cheltsov and J.~Park,
Birationally rigid Fano threefold hypersurfaces, 
Mem. Amer. Math. Soc. {\bf 246} (2017), v+117pp.

\bibitem{Corti95}
A.~Corti,
Factoring birational maps of threefolds after Sarkisov,
J.~Algebraic Geom. {\bf 4} (1995), no.~2, 223--254.

\bibitem{Corti}
A.~Corti, 
Singularities of linear systems and $3$-fold birational geometry, 
{\it Explicit birational geometry of $3$-folds}, London Math. Soc. Lecture Note Ser., {\bf 281}, Cambridge Univ. Press, Cambridge, 2000.

\bibitem{CLS}
D.A.~Cox, J.~Little \and H.~K.~Shenck,
Toric varieties, Graduate Studies in Mathematics, {\bf 124}, American Mathematical Society, Providence 2011, xxiv+841pp.

\bibitem{Gri00}
M.~M.~Grinenko,
Birational properties of pencils of del Pezzo surfaces of degrees 1 and 2,
Mat. Sb. {\bf 191} (2000), no.~5, 17--38; translation in Sb. Math. {\bf 191} (2000), 633--653.

\bibitem{Gri01}
M.~M.~Grinenko,
On fibrations into del Pezzo surfaces,
Mat. Zametki {\bf 69} (2001), no.~4, 550--565; translation in Math. Notes {\bf 69} (2001), no.~3-4, 499--513.

\bibitem{Gri03}
M.~M.~Grinenko,
Birational properties of pencils of del Pezzo surfaces of degrees 1 and 2. II,
Mat. Sb. {\bf 194} (2003), no.~5, 31--60; translation in Sb. Math. {\bf 194} (2003), 669--696.

\bibitem{Gri06}
M.~M.~Grinenko,
Fibrations into del Pezzo surfaces,
Uspekhi Mat. Nauk {\bf 61} (2006), no.~2, 67--112; translation in Russian Math. Surveys {\bf 61} (2006), no.~2, 255--300.

\bibitem{Iskovskikh}
V.~A.~Iskovskikh,
On the rationality problem for three-dimensional algebraic varieties fibered over del Pezzo fibrations,
Trudy Mat. Inst. Steklov. {\bf 208} (1995), Teor. Chisel, Algebra i Algebr. Geom., 128--138.

\bibitem{Kawamata}
Y.~Kawamata,
Divisorial contractions to $3$-dimensional terminal quotient singularities, in Higher-Dimensional Complex Varieties (Trento, Italy, 1994), de Gruyter, Berlin, 1996, 241--246.

\bibitem{KM}
J.~Koll\'{a}r and S.~Mori,
Birational geometry of algebraic varieties,
With the collaboration of C.~H.~Clemens and A.~Corti,
Cambridge Tracts in Mathematics, {\bf 134}. Cambridge University Press, Cambridge, 1998, viii+254pp.

\bibitem{Krylov}
I.~Krylov,
Birational geometry of del Pezzo fibrations with terminal quotient singularities,
J. Lond. Math. Soc. {\bf 97} (2018), Issue 2, 222--246.

\bibitem{KO}
I.~Krylov and T.~Okada,
Stable rationality of del Pezzo fibrations of low degree over projective spaces,
arXiv:1701.08372, 2017.

\bibitem{M}
A.~Mavlyutov,
The Hodge structure of semiample hypersurfaces and a generalization of the monomial-divisor mirror map,
Advances in Algebraic Geometry Motivated by Physics (Lowell, MA, 2000) (ed. E.~Previato),
Contemp. Math. {\bf 276}, AMS, Providence, RI, 1980, 199--227.

\bibitem{MP}
S.~Mori and Y.~Prokhorov,
Multiple fibers of del Pezzo fibrations,
Tr. Mat. Inst. Steklova {\bf 264} (2009), Mnogomernaya Algebraicheskaya Geometriya, 137--151; translation in Proc. Steklov Inst. Math. {\bf 264} (2009), no.~1, 131--145.

\bibitem{O}
T.~Okada,
$\mathbb{Q}$-Fano threefolds with three birational Mori fiber structures,
Algebraic Varieties and Automorphism Groups, 393--424, Adv. Stud. Pure Math., {\bf 75}, Math. Soc. Japan, Tokyo, 2017.

\bibitem{Ot}
J.~C.~Ottem,
Birational geometry of hypersurfaces in products of projective spaces,
Math. Z. {\bf 280} (2015), No.~1-2, 135--148.

\bibitem{Puk}
A.~V.~Pukhlikov,
Birational automorphisms of three-dimensional algebraic varieties with a pencil of del Pezzo surfaces,
Izv. Ross. Akad. Nauk Ser. Mat. {\bf 62} (1998), no.~1, 123--164; translation in Izv. Math. {\bf 62} (1998), no.~1, 115--155.

\end{thebibliography}
\end{document}